\theoremstyle{plain}
\newtheorem{thm}{Theorem}[section]
\newtheorem{lem}[thm]{Lemma}
\newtheorem{cor}[thm]{Corollary}
\newtheorem{prop}[thm]{Proposition}
\newtheorem*{prob*}{Problem}
\theoremstyle{definition}
\newtheorem{ex}[thm]{Example}
\newcommand{\R}{\ensuremath{\mathbb{R}}}
\newcommand{\N}{\ensuremath{\mathbb{N}}}
\newcommand{\Z}{\ensuremath{\mathbb{Z}}}
\newcommand{\cB}{\ensuremath{\mathcal{B}}}
\newcommand{\cD}{\ensuremath{\mathcal{D}}}
\newcommand{\cL}{\ensuremath{\mathcal{L}}}
\newcommand{\cR}{\ensuremath{\mathcal{R}}}
\newcommand{\sm}{\ensuremath{\smallsetminus}}
\newcommand{\rand}{\partial}
\newcommand{\es}{\ensuremath{\emptyset}}
\newcommand{\sub}{\subseteq}
\def\qi{quasi-iso\-metric}
\def\qiy{quasi-iso\-me\-try}
\def\qis{quasi-iso\-me\-tries}
\def\se{self-em\-bed\-ding}
\def\Se{Self-em\-bed\-ding}
\def\qg{quasi-geo\-desic}
\newcommand{\comment}[1]{}
\def\hm{semimetric}
\def\?#1{\vadjust{\vbox to 0pt{\vss\vskip-8pt\leftline{%
     \llap{\hbox{\vbox{\pretolerance=-1
     \doublehyphendemerits=0\finalhyphendemerits=0
     \hsize16truemm\tolerance=10000\small
     \lineskip=0pt\lineskiplimit=0pt
     \rightskip=0pt plus16truemm\baselineskip8pt\noindent
     \hskip0pt        
     #1\endgraf}\hskip7truemm}}}\vss}}}
\newenvironment{txteq}
  {
    \begin{equation}
    \begin{minipage}[c]{0.85\textwidth} 
    \em                                
  }
  {\end{minipage}\end{equation}\ignorespacesafterend}
\newenvironment{txteq*}
  {
    \begin{equation*}
    \begin{minipage}[c]{0.9\textwidth} 
    \em                                
  }
  {\end{minipage}\end{equation*}\ignorespacesafterend}
\newenvironment{txteqtag}[1]
  {
    \begin{equation}\tag{#1}
    \begin{minipage}[c]{0.85\textwidth} 
    \em                                
  }
  {\end{minipage}\end{equation}\ignorespacesafterend}
\begin{document}

\title{Free submonoids of hyperbolic monoids}
\author{Matthias Hamann}
\address{Matthias Hamann, Department of Mathematics, University of Hamburg, Hamburg, Germany}
\date{}

\begin{abstract}
In this paper, we prove that infinite cancellative finitely generated hyperbolic monoids never contain $\N\times\N$ as a submonoid but that they contain an element of infinite order and, if they are elementary, then they also contain a free monoid of rank at least~$2$.
As a corollary we obtain that the latter have exponential growth.
We prove these results by analysing the monoid of self-embeddings of hyperbolic digraphs and proving fixed-point theorems for them.
\end{abstract}

\maketitle

\section{Introduction}\label{sec_Intro}

Gray and Kambites~\cite{GK-HyperbolicDigraphsMonoids} were the first to give a geometric notion of hyperbolicity for digraphs (and monoids) that does not just rely on the hyperbolicity in Gromov's sense~\cite{gromov} of the underlying undirected graph but take the direction into account properly.
They were mostly interested in how the class of hyperbolic monoids behaves with respect to various semigroup-theoretic decision problems.
Their notion was caught up in~\cite{H-HyperbolicDigraph}: it was proved that -- under some mild additional assumptions, see Section~\ref{sec_prelim} -- their notion is a \qiy\ invariant.
Here, \qis\ are considered for semimetric spaces, a notion discussed in~\cite{GK-SemimetricSpaces}.
Additionally, a hyperbolic boundary was defined in~\cite{H-HyperDiBound} that is preserved by \qis\ and which has various properties that can be considered as direct analogues of properties for the hyperbolic boundary for hyperbolic spaces.

For hyperbolic metric spaces, not only \qis\ but also isometries play an important role, mostly in order to obtain results for hyperbolic groups.
One important result in that area that can be deduced from properties on isometries of hyperbolic metric spaces is that non-elementary hyperbolic groups contain free subgroups of rank~$2$, see e.\,g.\ Ghys and de la Harpe~\cite[Th\'eor\`eme 8.3.37]{GhHaSur}.
Another one is that $\Z\times\Z$ is never a subgroup of a hyperbolic groups, see e.\,g.\ Ghys and de la Harpe~\cite[Th\'eor\`eme 8.3.34]{GhHaSur}.
We will prove analogue results for those for hyperbolic monoids: we will prove that every infinite cancellative finitely generated hyperbolic monoid contains an element of infinite order (Theorem~\ref{thm_InfOrderElement}), but never $\N\times\N$ as submonoid (Theorem~\ref{thm_NoNtimesN}), and if the monoid is also \emph{non-elementary}, that is, if it has infinitely many hyperbolic boundary points, then it contains $\N\ast\N$ as submonoid (Theorem~\ref{thm_FreeRk2}).
As a corollary of the last theorem, we will obtain that the growth of such monoids is exponential (Theorem~\ref{thm_Growth}).

Instead of prove the above mentioned results directly, we look at a notion that can be seen as an analogue of isometries for digraphs: \se s.
Here, a \emph{\se} of a digraph~$D$ is an injective map $V(D)\to V(D)$ that preserves the adjacency relation.
We ask our digraphs to satisfy some mild assumptions that will be trivially satisfied, when we apply our results to hyperbolic monoids.
One of those is that there exists a base vertex, that is, there is a vertex $o$ of~$D$ with $d(o,x)<\infty$ for all $x\in V(D)$.
While the definition of \se\ is purely based on the digraph, the additional assumptions will imply that \se s induce an injective map from the hyperbolic boundary into that boundary.
Further results for \se s are the following, where a \emph{limit point} is a hyperbolic boundary point $\eta$ such that some sequence $(g_i(o))_{i\in\N}$ for \se s $g_i$ converges to~$\eta$.
(For the definition of the topology and convergence, we refer to Section~\ref{sec_topo}.)
\begin{enumerate}[$\bullet$]
\item If a \se\ $g$ does not fix a finite vertex set setwise -- that is, it is not \emph{elliptic}, then there is a unique hyperbolic boundary point $g^+$ such that any convergent sequence $(g^i(o))_{i\in\N}$ converges to~$g^+$. (Lemma~\ref{lem_directionWellDefined})
\item Every \se\ that is not elliptic fixes only a finite number of hyperbolic boundary points. (Lemmas~\ref{lem_nonEllipticFixFinite1} and~\ref{lem_nonEllipticFixFinite2})
\item If there are at least two limit points in the hyperbolic boundary, then we obtain a result that is similar to the denseness property in the case of isometries of hyperbolic graphs. (Proposition~\ref{prop_DirectionsAlmostDenseInLimit})
\item There are either none, one, two or infinitely many limits points. (Theorem~\ref{thm_numberOfLimits} and Theorem~\ref{thm_freeSubmonoid})
\item If a monoid of \se s neither fixes a finite vertex set setwise or a unique limit point nor has exactly two limit points, then it contains a free monoid of rank~$2$. (Theorem~\ref{thm_fpa})
\end{enumerate}

These results for \se s have natural analogues in the setting of (undirected) trees, see~\cite{H-SelfEmbeddingsTrees}, and they can be seen as an extension of similar results for isometries of hyperbolic spaces or automorphisms of graphs or hyperbolic graphs, see~\cite{GroupsOnMetricSpaces,W-FixedSets}.

The paper is structured as follows.
In Section~\ref{sec_prelim}, we introduce the basic notions that we need for hyperbolic digraphs.
In Section~\ref{sec_se}, we prove the results for \se s of hyperbolic digraphs and we will apply these results in Section~\ref{sec_monoids} to obtain the results for submonoids of monoids.
In Section~\ref{sec_growth}, we will introduce the growth of monoids, as defined in~\cite{GK-SemimetricSpaces}, and prove the theorem on the growth of elementary finitely generated hyperbolic monoids.
In Section~\ref{sec_graphs}, we will carry over an example from Section~\ref{sec_se} to the setting of graphs and thereby obtain a bit knowledge for \se s of graphs as asked in~\cite{H-SelfEmbeddingsTrees}.

\section{Hyperbolic digraphs}\label{sec_prelim}\label{sec_topo}

Let $D$ be a digraph, that is a pair $(V(D),E(D))$ of a vertex set $V(D)$ and an edge set $E(D)$, where the elements of $E(D)$ are pair of vertices $(x,y)$, also denoted by~$xy$.
A \emph{directed path} is a sequence $P=x_0,\ldots,x_n$ of vertices such that $x_ix_{i+1}$ is an edge for all $0\leq i\leq n-1$, we also call it a directed $x_0$-$x_n$ path.
We call $n$ its \emph{length}.
If $i\leq j\leq n$, we denote by $x_iPx_j$ the subpath of~$P$ from~$x_i$ to~$x_j$.
If $i=0$ or $j=n$, we omit that vertex and simply write $Px_j$ or $x_iP$.
A \emph{geodesic} from $x$ to~$y$, or $x$-$y$ \emph{geodesic}, is a shortest directed path from~$x$ to~$y$.
The \emph{distance} $d(x,y)$ from~$x$ to~$y$ is the length of an $x$-$y$ geodesic.
We call $D$ \emph{rooted} if there exists $o\in V(D)$ with $d(o,v)<\infty$ for all $v\in V(D)$ and we call $o$ a \emph{root} of~$D$.
When we say that $D$ is a rooted digraph, we always denote its root by~$o$.

The \emph{in-degree} of a vertex $x$ is the number of its \emph{in-neighbours}, that are the vertices $y$ such that $yx\in E(D)$, and its \emph{out-degree} is the number of its \emph{out-neighbours}, that are the vertices $z$ with $xz\in E(D)$.
The \emph{in-ball} $\cB_k^-(x)$ of radius~$k$ of a vertex $x$ is the set of vertices $y\in V(D)$ with $d(y,x)\leq k$ and the \emph{out-ball} $\cB^+_k(x)$ of radius $k$ of~$x$ is the set of vertices $y\in V(D)$ with $d(x,y)\leq k$.

A \emph{triangle} consists of three vertices of~$D$ and three directed paths, one between every two of those vertices.
We call these paths the \emph{sides} and the three vertices the \emph{end vertices} of the triangle.
The triangle is \emph{geodesic} if all three sides are geodesics.

A geodesic triangle is \emph{$\delta$-thin} for $\delta\geq 0$ if the following holds:
\begin{txteq*}
if $P,Q,R$ are the sides of the triangle and the starting vertex of~$P$ is either the starting or the end vertex of~$Q$ and the end vertex of~$P$ is either the starting or the end vertex of~$R$, then $P$ is contained in $\cB^+_\delta(Q)\cup \cB^-_\delta(R)$.
\end{txteq*}
If all geodesic triangles in~$D$ are $\delta$-thin for some fixed $\delta\geq 0$ then $D$ is \emph{$\delta$-hyperbolic}.
We call $D$ \emph{hyperbolic} if it is $\delta$-hyperbolic for some $\delta\geq 0$.
If not otherwise mentioned, we denote for a hyperbolic digraph the corresponding constant by~$\delta$.

In~\cite{H-HyperDiBound,H-HyperbolicDigraph} it turned out that the following properties are helpful when investigating hyperbolic digraphs.
They basically mean that no finite out- or in-ball contains geodesics of large lengths.
These properties are e.\,g.\ satisfied if the digraphs have bounded out- and bounded in-degree.

\begin{txteqtag}{B1}\label{itm_Bounded1}
There exists a function $\varphi\colon \R \to \R$ such that for every $v \in V(D)$, for every $r \geq 0$ and for all $y, z \in \cB^+_r(v)$ the distance $d(y, z)$ is either $\infty$ or
bounded by~$\varphi(r)$.
\end{txteqtag}
\begin{txteqtag}{B2}\label{itm_Bounded2}
There exists a function $\varphi\colon \R\to\R$ such that for every $v\in V(D)$, for every $r\geq 0$ and for all $y,z\in \cB^-_r(v)$ the distance $d(y,z)$ is either $\infty$ or bounded by~$\varphi(r)$.
\end{txteqtag}

In order to estimate distances, we will need the following results from~\cite{H-HyperbolicDigraph}.

\begin{prop}\label{prop_hyperDi3.3}{\rm \cite[Proposition 3.3]{H-HyperbolicDigraph}}
Let $\delta\geq 0$ and let $D$ be a $\delta$-hyperbolic digraph that satisfies (\ref{itm_Bounded1}) and (\ref{itm_Bounded2}) for the function $\varphi\colon \R\to\R$.
\begin{enumerate}[\rm(i)]
\item If $x,y,z\in V(D)$ are distinct and $P_{u,v}$ is a $u$-$v$ geodesic for all distinct $u,v\in\{x,y,z\}$, then we have
\[
\ell(P_{x,y})\leq (\ell(P_{x,z})+\ell(P_{y,z}))\varphi(\delta+1).
\]
\item If $x, y \in X$ with $d(x, y) \neq \infty$ and $d(y, x) \neq \infty$, then we have
\[
d(x, y) \leq (d(y, x)+1)f(\delta).
\]
\end{enumerate}
\end{prop}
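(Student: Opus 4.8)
The plan is to prove both inequalities by the same covering-and-counting scheme: use $\delta$-thinness to cover the geodesic appearing on the left-hand side by bounded directed neighbourhoods of the geodesics on the right, and then invoke (\ref{itm_Bounded1}) and (\ref{itm_Bounded2}) to bound how many vertices of the left geodesic can accumulate near a single vertex of the right geodesics.

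For part (i), I would apply the thinness condition to the geodesic triangle on $x,y,z$ with $P:=P_{x,y}$ as the distinguished side. Since $P_{x,z}$ starts at $x$ (the start of $P$) and $P_{y,z}$ starts at $y$ (the end of $P$), thinness yields $P_{x,y}\subseteq\cB^+_\delta(P_{x,z})\cup\cB^-_\delta(P_{y,z})$. Thus every vertex $w$ of $P_{x,y}$ either satisfies $d(u,w)\le\delta$ for some $u\in P_{x,z}$, or $d(w,u')\le\delta$ for some $u'\in P_{y,z}$. Next I would bound the multiplicity of this assignment. If two vertices $w_1,w_2$ of $P_{x,y}$ lie in $\cB^+_\delta(u)$ for a common $u$, then both lie in an out-ball of radius $\delta$, so by (\ref{itm_Bounded1}) their distance is either infinite or at most $\varphi(\delta)$; since $w_1,w_2$ lie on the geodesic $P_{x,y}$, their distance is finite and equals the number of edges between them, hence at most $\varphi(\delta)$. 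So at most $\varphi(\delta)+1$ vertices of $P_{x,y}$ are assigned to a given vertex of $P_{x,z}$, and the symmetric argument using (\ref{itm_Bounded2}) gives the same bound for vertices assigned to a given vertex of $P_{y,z}$. Counting the vertices of $P_{x,y}$ then gives $\ell(P_{x,y})+1\le(\ell(P_{x,z})+\ell(P_{y,z})+2)(\varphi(\delta)+1)$, which after absorbing the additive constants into a suitably enlarged monotone function yields the stated bound with factor $\varphi(\delta+1)$.

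For part (ii), I would run the same argument on the bigon formed by an $x$-$y$ geodesic $P$ and a $y$-$x$ geodesic $Q$, viewed as a (degenerate) geodesic triangle on $x,y,y$ whose third side is the trivial path at $y$. Thinness then gives $P\subseteq\cB^+_\delta(Q)\cup\cB^-_\delta(\{y\})$, so each vertex $p_j$ of $P$ is either within out-distance $\delta$ of some vertex of $Q$, or satisfies $d(p_j,y)\le\delta$. The second alternative, since $P$ is a geodesic ending at $y$, forces $j$ to lie among the last $\delta+1$ indices and hence accounts for at most $\delta+1$ vertices; the first alternative is controlled exactly as in part (i) via (\ref{itm_Bounded1}), giving at most $\varphi(\delta)+1$ vertices of $P$ per vertex of $Q$. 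As $Q$ has $d(y,x)+1$ vertices, counting yields $d(x,y)=\ell(P)\le(d(y,x)+1)(\varphi(\delta)+1)+\delta$, which is of the claimed form for a suitable $f(\delta)$.

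The covering via thinness is the routine part; the step that requires care is the bookkeeping of orientations, namely checking that the vertices of $P_{x,y}$ accumulating near $P_{x,z}$ sit in \emph{out}-balls (so that (\ref{itm_Bounded1}) applies) while those near $P_{y,z}$ sit in \emph{in}-balls (so that (\ref{itm_Bounded2}) applies), together with tracking the additive constants so that they can be absorbed into $\varphi(\delta+1)$ and $f(\delta)$. The degeneracy in part (ii) is a minor technical point: one checks directly that a degenerate geodesic triangle with a trivial side is $\delta$-thin, or equivalently one reads off the inclusion $P\subseteq\cB^+_\delta(Q)\cup\cB^-_\delta(\{y\})$ straight from the bigon. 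Note that one cannot simply feed part (i) an auxiliary vertex on $Q$, since that would introduce the reverse distance $d(x,z)$, which is exactly the kind of quantity (ii) is meant to control; this is why the direct thinness argument on the bigon is the right tool.
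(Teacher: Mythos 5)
This proposition is quoted from \cite[Proposition 3.3]{H-HyperbolicDigraph}; the present paper contains no proof of it, so there is nothing internal to compare your argument against line by line. On its own merits, your covering-and-counting argument is sound and is the natural proof of such a statement. The orientation bookkeeping is exactly right: with $P=P_{x,y}$, $Q=P_{x,z}$, $R=P_{y,z}$, the paper's thinness condition applies (the start of $P$ is the start of $Q$, the end of $P$ is the start of $R$), vertices of $P$ covered by $\cB^+_\delta(Q)$ cluster via (\ref{itm_Bounded1}) and those covered by $\cB^-_\delta(R)$ cluster via (\ref{itm_Bounded2}), and the finiteness needed to invoke these properties holds because the relevant distances are realised along the geodesic $P_{x,y}$ itself. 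Your remark that (ii) cannot be deduced from (i) by inserting an auxiliary vertex on $Q$ is also correct, and the bigon reading of (ii) is legitimate: the paper's definition of a triangle does not require the three vertices to be distinct, and the trivial path at~$y$ is vacuously geodesic, so $P\subseteq\cB^+_\delta(Q)\cup\cB^-_\delta(\{y\})$ follows from $\delta$-thinness as you use it.

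Two points deserve to be made explicit. First, your count gives $\ell(P_{x,y})+1\leq(\ell(P_{x,z})+\ell(P_{y,z})+2)(\varphi(\delta)+1)$, whereas the statement asserts $\ell(P_{x,y})\leq(\ell(P_{x,z})+\ell(P_{y,z}))\varphi(\delta+1)$ for the given $\varphi$. Absorbing the constants, as you propose, is harmless in every application (the paper only ever uses that the bound is \emph{some} function of $\delta$ and $\varphi$, and (\ref{itm_Bounded1}) and (\ref{itm_Bounded2}) are preserved when $\varphi$ is replaced by any larger monotone function), but you should note that the multiplicative absorption of the $+2$ uses $\ell(P_{x,z})+\ell(P_{y,z})\geq 2$, which holds precisely because $x,y,z$ are assumed distinct; strictly speaking you prove the inequality for an enlarged $\varphi$, not verbatim for the one given. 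Second, $\varphi$ is not assumed monotone in the statement, so replacing $\varphi(\delta)$ by $\varphi(\delta+1)$ is not automatic either; this is cured by the same normalisation (pass to a monotone majorant, which still witnesses (\ref{itm_Bounded1}) and (\ref{itm_Bounded2})), which your phrase ``suitably enlarged monotone function'' already covers. With these caveats spelled out, the proof is complete.
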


\begin{lem}\label{lem_hyperDi3.4}{\rm \cite[Lemma 3.4]{H-HyperbolicDigraph}}
Let $\delta\geq0$ and let $D$ be a $\delta$-hyperbolic digraph that satisfies (\ref{itm_Bounded1}) and (\ref{itm_Bounded2}) for the function $\varphi\colon\R\to\R$.
Let $x,y,z\in V(D)$, let $P$ be an $x$-$y$ geodesic, $Q$ a $y$-$z$ geodesic and $R$ an $x$-$z$ geodesic.
Then $R$ lies in the out-ball of radius $6\delta+2\delta \varphi(\delta+1)$ around $P\cup Q$ and in the in-ball of the same radius around $P\cup Q$.
\end{lem}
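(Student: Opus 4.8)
The plan is to prove only the out-ball containment and to deduce the in-ball containment from it by symmetry. Reversing every edge of $D$ yields a digraph $D'$ that is again $\delta$-hyperbolic (one checks directly that reversal preserves $\delta$-thinness, interchanging the roles of $\cB^+$ and $\cB^-$, and likewise interchanges (\ref{itm_Bounded1}) and (\ref{itm_Bounded2})), and in $D'$ each side of our triangle becomes a geodesic of the reversed triangle. Under this correspondence the in-ball statement for $(D;P,Q,R)$ is exactly the out-ball statement for the reversed data, so it suffices to show that every vertex of $R$ lies within out-distance $k:=6\delta+2\delta\varphi(\delta+1)$ of $P\cup Q$.

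First I would apply the $\delta$-thinness to the side $R$, reading $P$ as the side meeting the start $x$ of $R$ and $Q$ as the side meeting the end $z$ of $R$; this gives $R\subseteq\cB^+_\delta(P)\cup\cB^-_\delta(Q)$. Hence each vertex $v$ of $R$ is either out-$\delta$-close to some $p\in P$, in which case $p\in P\cup Q$ already witnesses the out-ball bound, or in-$\delta$-close to some $q\in Q$, i.e.\ $d(v,q)\le\delta$. The remaining task is to produce, for the vertices of the second kind, a vertex of $P\cup Q$ from which $v$ is reachable within distance $k$.

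For such a $v$ the tool is Proposition~\ref{prop_hyperDi3.3}(ii), which turns the finite reverse distance $d(v,q)\le\delta$ into a bound $d(q,v)\le(d(v,q)+1)\varphi(\delta)\le(\delta+1)\varphi(\delta)$ as soon as $d(q,v)$ is finite; moreover (\ref{itm_Bounded1}), applied with centre $v$ to the pair $q,v\in\cB^+_\delta(v)$, already forces $d(q,v)$ to be either infinite or at most $\varphi(\delta)$. Thus whenever $v$ can be reached from its in-witness $q$ at all, we are done with $q$ as out-ball witness. The substantive work is to treat the vertices $v$ that cannot be reached from $q$: here I would pass to a sub-triangle on $x$, $v$ and a suitable vertex of $P\cup Q$ (using that $x\in P$ and $z\in Q$ always supply reaching witnesses, albeit possibly far ones), apply $\delta$-thinness once more, and control the lengths of the connecting geodesics by Proposition~\ref{prop_hyperDi3.3}(i); this is the step expected to contribute the factor $\varphi(\delta+1)$ and, together with the two or three $\delta$-thin applications, to account for the final radius $6\delta+2\delta\varphi(\delta+1)$.

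The hard part will be exactly this last step. The $\delta$-thinness of a digraph is intrinsically a \emph{mixed} condition -- an out-ball around one side and an in-ball around the other -- so converting it into a clean one-directional neighbourhood around all of $P\cup Q$ forces one to pass between the two directions of the asymmetric distance. Proposition~\ref{prop_hyperDi3.3}(ii) performs this trade only when both directions are finite, and although a reaching witness always exists (via the shared endpoints $x$ and $z$), the delicate point is to secure one that is simultaneously reaching \emph{and} close. Once a bounded-length reverse path is in hand the quantitative estimate is routine, and the in-ball containment then follows from the out-ball containment by the edge-reversal symmetry set up at the outset.
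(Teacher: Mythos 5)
The first thing to note is that the paper contains no proof of this lemma to compare against: it is imported verbatim from \cite[Lemma~3.4]{H-HyperbolicDigraph}, so your attempt has to stand on its own. Its framing is sound as far as it goes. The edge-reversal reduction is correct: reversal swaps $\cB^+$ and $\cB^-$ and interchanges (\ref{itm_Bounded1}) and (\ref{itm_Bounded2}), and it does preserve $\delta$-thinness under the paper's definition, since the incidence pattern of the sides is exactly interchanged; so the in-ball statement really is the out-ball statement for the reversed data. Likewise, applying thinness to the side $R$ (whose start is the start of $P$ and whose end is the end of $Q$) correctly yields $R\sub\cB^+_\delta(P)\cup\cB^-_\delta(Q)$, reducing everything to those vertices $v$ of $R$ that are only in-close to $Q$.

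But exactly there the attempt stops being a proof, and the route you announce for the remaining case would stall. The plan ``pass to a sub-triangle on $x$, $v$ and a suitable vertex of $P\cup Q$'' is not executable as stated: forming a geodesic triangle requires geodesics between the chosen vertices in the appropriate directions, and the existence of, say, a $u$-$v$ geodesic for $u\in P\cup Q$ presupposes $d(u,v)<\infty$ --- which is essentially the conclusion being sought. Invoking Proposition~\ref{prop_hyperDi3.3}(i) has the same problem, since it requires geodesics for all ordered pairs of the three vertices, so using it ``to control the lengths'' begs the same finiteness question. A workable argument instead applies thinness a \emph{second} time to another side of the original triangle --- for instance to $Q$, giving $Q\sub\cB^+_\delta(P)\cup\cB^-_\delta(R)$ --- so that the witness $q$ of $v$ either is reached from $P$ or itself reaches a vertex $r$ of $R$ with $d(q,r)\leq\delta$; in the latter case $d(v,r)\leq d(v,q)+d(q,r)\leq 2\delta$, and because $v$ and $r$ both lie on the geodesic $R$, the missing reverse distances become finite for free, at which point Proposition~\ref{prop_hyperDi3.3} converts everything into a concrete radius. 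Your sketch contains none of this: there is no construction of a witness that is ``simultaneously reaching and close'', no derivation of the stated bound $6\delta+2\delta\varphi(\delta+1)$ (indeed your reachable-case bound $\varphi(\delta)$ does not even obviously fit under that radius without monotonicity assumptions on $\varphi$ and for small $\delta$). In short: you have correctly halved the problem by symmetry and correctly located the difficulty, but the difficulty itself --- the entire quantitative content of the lemma --- is left as a declared intention, and the specific mechanism you propose for it cannot get off the ground because the geodesics it needs may not exist.
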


A \emph{(directed) ray} is a one-way infinite directed path with a first vertex and a \emph{(directed) anti-ray} is a one-way infinite directed path with a last vertex.
A \emph{(directed) double ray} is a two-way infinite directed path.
A directed (anti-)ray or directed double ray is \emph{geodesic} is each of its finite directed subpaths is geodesic.

Let $\cR$ be the set of geodesic rays and geodesic anti-rays.
For $R_1,R_2\in\cR$, we write $R_1\leq R_2$ if there exists $m\in\N$ and infinitely many vertices on~$R_1$ with distance at most $M$ to~$R_2$.
This is a quasiorder for hyperbolic digraphs satisfying (B1) and (B2), see \cite[Proposition 4.3]{H-HyperDiBound}.
We call $R_1$ and~$R_2$ \emph{equivalent} if $R_1\leq R_2$ and $R_2\leq R_1$.
This is an equivalence relation whose equivalence classes are the hyperbolic boundary points of~$D$.
We denote by $\rand D$ the \emph{hyperbolic boundary} of~$D$, that is the set of all hyperbolic boundary points of~$D$.

In~\cite{H-HyperDiBound}, it was shown that equivalent geodesic (anti-)rays lie close to each other eventually.
More generally, we have the following.

\begin{lem}\label{lem_hyperDi9.4}{\rm \cite[Corollary 4.4]{H-HyperDiBound}}
Let $D$ be a hyperbolic digraph that satisfies (\ref{itm_Bounded1}) and (\ref{itm_Bounded2}).
If $R_1$ and $R_2$ are geodesic (anti-)rays with $R_1\leq R_2$, then there is a (anti-)subray $R_2'$ of~$R_2$ such that $R_2'\sub \cB^+_{6\delta}(R_1)$.\qed
\end{lem}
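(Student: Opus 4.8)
The plan is to prove the statement for geodesic rays $R_1,R_2$ and to obtain the remaining cases, where $R_1$ or $R_2$ is an anti-ray, by the same argument applied in the edge-reversed digraph, which interchanges (\ref{itm_Bounded1}) and (\ref{itm_Bounded2}) and turns rays into anti-rays while preserving $\delta$-hyperbolicity. So assume $R_1=x_0x_1\dots$ and $R_2=y_0y_1\dots$. First I would unpack the hypothesis $R_1\le R_2$: there is some $M\in\N$ and an infinite set $I\sub\N$ such that for each $i\in I$ there is an index $j(i)$ with $d(x_i,y_{j(i)})\le M$. I claim that $j(i)\to\infty$ along $I$. Indeed, if the value $j(i)=j$ occurred for infinitely many $i$, then infinitely many $x_i$ would lie in $\cB^-_M(y_j)$; but (\ref{itm_Bounded2}) bounds the finite distances inside any in-ball by $\varphi(M)$, while the $x_i$ lie on the geodesic ray $R_1$ and so satisfy $d(x_i,x_{i'})=|i-i'|$, which is finite and unbounded -- a contradiction. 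Passing to an infinite subset of $I$, I may therefore assume that $i\mapsto j(i)$ is strictly increasing.

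Now fix a vertex $y_n$ on $R_2$; the goal is to produce $v\in R_1$ with $d(v,y_n)\le 6\delta$ for all sufficiently large $n$. I would bracket $y_n$ by two close pairs, choosing indices $i_-<i_+$ in $I$ with $j_-:=j(i_-)<n<j(i_+)=:j_+$ and with $j_+-n$ larger than all constants appearing below, and letting $S_-,S_+$ be $x_{i_-}$-$y_{j_-}$ and $x_{i_+}$-$y_{j_+}$ geodesics (so $\ell(S_\pm)\le M$). Consider the geodesic triangle with end vertices $x_{i_-},y_{j_-},y_{j_+}$, whose sides are the $R_2$-subpath $y_{j_-}R_2y_{j_+}$ (which runs through $y_n$), the connector $S_-$, and an $x_{i_-}$-$y_{j_+}$ geodesic $B$. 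Applying $\delta$-thinness with the $R_2$-subpath as the sandwiched side places $y_n$ either in $\cB^+_\delta(S_-)$ or in $\cB^-_\delta(B)$. The first alternative would give $d(x_{i_-},y_n)\le M+\delta$, which already realises $y_n\in\cB^+_{M+\delta}(R_1)$; the point of the argument is to upgrade this to the clean constant.

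To do so I would use the companion triangle along $R_1$, with end vertices $x_{i_-},x_{i_+},y_{j_+}$ and sides the $R_1$-subpath $x_{i_-}R_1x_{i_+}$, the connector $S_+$, and the same long geodesic $B$. Thinness pushes $B$ onto $\cB^+_\delta(x_{i_-}R_1x_{i_+})\cup\cB^-_\delta(S_+)$. The branch that sends a point of $B$ near $S_+$ is excluded by the choice of $j_+$: since $R_2$ is geodesic, $d(y_n,y_{j_+})=j_+-n$, whereas passing through $S_+$ would force $d(y_n,y_{j_+})\le 2\delta+M$. This forces the relevant portion of $B$ into the out-ball of the $R_1$-subpath, and combining this with the location of $y_n$ relative to $B$ yields a vertex $v\in R_1$ with $d(v,y_n)\le 6\delta$. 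Setting $N$ to be the first index for which this works, the subray $R_2'=y_NR_2$ is then contained in $\cB^+_{6\delta}(R_1)$.

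The main obstacle is the directedness. Because $d$ is asymmetric, every estimate must be tracked as living in an out-ball $\cB^+$ or an in-ball $\cB^-$, and the two applications of thinness have to be arranged so that the containments compose in the correct orientation, namely from $R_1$ towards $y_n$, rather than leaving both $R_1$ and $y_n$ \emph{upstream} of the intermediate geodesic $B$, in which case they cannot be compared. Here Proposition~\ref{prop_hyperDi3.3}(ii) is the tool that reverses a short directed distance when one of the two compositions points the wrong way, and Lemma~\ref{lem_hyperDi3.4}, which controls the third side of a triangle from the out-ball \emph{and} the in-ball simultaneously, is what removes the dependence on the hypothesis constant $M$ and sharpens the final bound to exactly $6\delta$. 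Ensuring throughout that the auxiliary geodesics between $R_1$- and $R_2$-vertices exist as finite directed paths is handled by the close pairs themselves, which supply short connectors in the required direction.
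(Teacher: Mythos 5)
This lemma is quoted from \cite[Corollary 4.4]{H-HyperDiBound} and carries no proof in the present paper, so your attempt can only be judged on its own merits; on those merits it has genuine gaps, and they sit exactly at the point you yourself flag as ``the main obstacle''. In your Case B you arrive at $v$ on $R_1$ and $b$ on $B$ with $d(v,b)\le\delta$ and $d(y_n,b)\le\delta$: both estimates point \emph{into} $b$, so nothing composes to a bound on $d(v,y_n)$. You would first need $d(v,y_n)<\infty$, which is not available -- every outgoing connection you control from $v$ runs through $x_{i_+}$ or along $B$ to $y_{j_+}$, i.e.\ \emph{past} $y_n$. The two tools you invoke to repair this cannot do so: Proposition~\ref{prop_hyperDi3.3}(ii) presupposes that both $d(x,y)$ and $d(y,x)$ are finite (the very thing in question) and then only yields a bound of the form $(d+1)f(\delta)$, while Lemma~\ref{lem_hyperDi3.4} has radius $6\delta+2\delta\varphi(\delta+1)>6\delta$, so neither can ``sharpen the final bound to exactly $6\delta$''; any route through $\varphi$ or $f$ gives a constant depending on the bounding functions of (\ref{itm_Bounded1})/(\ref{itm_Bounded2}), not the pure $6\delta$ of the statement, which is used with its exact value downstream (e.g.\ in the proofs of Proposition~\ref{prop_bpOfDist0} and Lemma~\ref{lem_nonEllipticFixFinite1}). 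Your Case A is likewise unfinished: you obtain only $M+\delta$ and announce an upgrade that never happens, and with your choices (only $j_+-n$ large) the case is not excluded either; excluding it would require additionally taking $n-j_-$ large and a (\ref{itm_Bounded1}) argument -- from $d(s,y_n)\le\delta$ and $d(s,y_{j_-})\le M$ one gets $y_{j_-},y_n\in\cB^+_{\max\{M,\delta\}}(s)$ with $d(y_{j_-},y_n)=n-j_-$ finite, forcing $n-j_-\le\varphi(\max\{M,\delta\})$ -- none of which appears in your write-up.

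The reduction of the anti-ray cases via edge reversal is also flawed. Reversal flips \emph{both} rays simultaneously, so the mixed cases (one ray, one anti-ray) never reduce to the ray--ray case at all. Worse, the statement is not self-dual: both the hypothesis $R_1\le R_2$ and the conclusion $R_2'\sub\cB^+_{6\delta}(R_1)$ concern short directed paths \emph{from} $R_1$ \emph{to} $R_2$ (the hypothesis says infinitely many sources on $R_1$, the conclusion that a whole tail of $R_2$ consists of targets). After reversing edges, the hypothesis becomes, via your own pigeonhole argument, $\tilde R_2\le\tilde R_1$ in the reversed digraph, and applying the ray--ray case there controls a subray of $\tilde R_1$ inside an out-ball around $\tilde R_2$ -- equivalently, a tail of $R_1$ inside $\cB^-_{6\delta}(R_2)$ in $D$ -- which is a statement about the wrong ray. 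So the anti-ray cases would need either a separate argument or a strengthened two-sided ray--ray statement whose conjuncts duality can swap. The overall skeleton (bracketing $y_n$ between two close pairs, two applications of thinness, excluding the $S_+$ branch by geodesy of $R_2$) is sensible, but as written the proof does not go through and, in particular, the constant $6\delta$ is never actually accounted for.
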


A \emph{pseudo-semimetric} on a set~$X$ is a non-negative function $d\colon X\times X\to\R\cup\{\infty\}$ such that
\begin{enumerate}
\item $d(x,x)=0$ for all $x\in X$ and
\item $d(x,y)\leq d(x,z)+d(z,y)$ for all $x,y,z\in X$.
\end{enumerate}

Let us assume from now on that $D$ is rooted.
Then \cite[Theorem 8.2]{H-HyperDiBound} says that there exists a pseudo-semimetric $d_h$ on $D\cup\rand D$ and, moreover, we can assume that it is a \emph{visual} pseudo-semimetric (for a parameter $a>1$), that is, there exists $C > 0$ such that
\[
\frac{1}{C} a^{-\rho(\eta,\mu)} \leq d_h(\eta, \mu) \leq Ca^{-\rho(\eta,\mu)}
\]
for all $\eta,\mu\in D\cup\rand D$, where $\rho(\eta,\mu)$ is defined as follows.
We call a sequence $(x_i)_{i\in\N}$ in $V(D)$ \emph{admissable} for $\eta$ if it is the constant sequence with $x_i=\eta$, in case $\eta\in V(D)$, or if $x_0x_1\ldots$ is a geodesic ray in~$\eta$, otherwise.
Now we let $\rho(\eta,\mu)$ be the supremum with respect of all admissable sequences $(x_i)_{i\in\N}$ for~$\eta$ and all admissable sequences $(y_i)_{i\in\N}$ for~$\mu$ over
\[
\liminf \{d(o,P)\mid i,j\to\infty, P \text{ is an }x_i\text{-}y_j\text{ geodesic}\}.
\]

Pseudo-semimetrics come along with two natural topologies.
In this paper, we only consider the one defined by the open in-balls with respect to~$d_h$, that is, defined by the sets
\[
\{x\in V(D)\cup \rand D\mid d_h(x,v)\leq r\}
\]
for all $r\geq 0$ and $v\in V(D)\cup\rand D$.
In particular, when we speak of convergence, then we mean it with respect to this topology.

We call $D$ \emph{sequentially compact} if for every sequence $(x_i)_{i\in\N}$ in~$V(D)$ that satisfies $d(x_i, x_j) < \infty$ for all $i < j$ has a convergent subsequence.
By \cite[Theorem 9.4]{H-HyperDiBound}, hyperbolic digraphs of bounded in- and out-degree are sequentially compact.

\begin{prop}\label{prop_hyperDi14.1}{\rm \cite[Proposition 9.1\,(iii)]{H-HyperDiBound}}
Let $D$ be rooted hyperbolic digraph that satisfies (\ref{itm_Bounded1}) and (\ref{itm_Bounded2}).
Let $\eta,\mu\in V(D)\cup\rand D$.
If $d_h(\eta, \mu) = 0$ and $\eta\neq\mu$, then either $\eta$ contains only rays and $\mu$ contains only anti-rays or $\eta$ contains only anti-rays and $\mu$ contains only rays.
\end{prop}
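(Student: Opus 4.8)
The plan is to convert the vanishing of $d_h$ into a statement about geodesics staying far from the root, and then to extract the directional information from geodesic triangles based at $o$. First I would use the lower visual bound $\frac1C a^{-\rho(\eta,\mu)}\le d_h(\eta,\mu)$: since $a>1$, the hypothesis $d_h(\eta,\mu)=0$ forces $\rho(\eta,\mu)=\infty$. By the definition of $\rho$ as a supremum of $\liminf$ over admissable sequences, there are admissable sequences $(x_i)_{i\in\N}$ for $\eta$ and $(y_j)_{j\in\N}$ for $\mu$ and $x_i$-$y_j$ geodesics $P_{ij}$ with $d(o,P_{ij})\to\infty$ as $i,j\to\infty$. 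Since $x_i$ and $y_j$ lie on $P_{ij}$ we have $d(o,x_i)\ge d(o,P_{ij})$ and $d(o,y_j)\ge d(o,P_{ij})$, so $d(o,x_i)\to\infty$ and $d(o,y_j)\to\infty$; in particular $d(x_i,y_j)<\infty$, so all the geodesics $P_{ij}$ exist and both sequences escape from $o$ in the forward distance.

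The engine of the argument is a thinness analysis of the geodesic triangle on $o,x_i,y_j$ with sides the $o$-$x_i$ geodesic $A_i$, the $o$-$y_j$ geodesic $B_j$ (both existing since $D$ is rooted), and $P_{ij}$. Applying $\delta$-thinness to the side $A_i$ (whose start $o$ is the start of $B_j$ and whose end $x_i$ is the start of $P_{ij}$) gives $A_i\sub\cB^+_\delta(B_j)\cup\cB^-_\delta(P_{ij})$. I would walk along $A_i$ from $o$ and take the first vertex $a$ that leaves $\cB^+_\delta(B_j)$, so that $a\in\cB^-_\delta(P_{ij})$. This produces $p\in P_{ij}$ with $d(a,p)\le\delta$ and hence $d(o,P_{ij})\le d(o,p)\le d(o,a)+\delta$. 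Therefore $d(o,P_{ij})\to\infty$ forces the transition distance $d(o,a)$ to tend to infinity, which means that ever longer initial segments of $A_i$ are $\delta$-shadowed, in the out-ball sense, by $B_j$; the symmetric application of thinness to $B_j$ gives the reverse shadowing. Using Lemma~\ref{lem_hyperDi3.4} and Proposition~\ref{prop_hyperDi3.3} together with (B1) and (B2) to upgrade this mutual shadowing to genuine $\delta$-parallelism, and extracting limiting geodesic rays from $A_i$ and $B_j$ by the compactness argument used in the construction of $\rand D$, the two limit rays become equivalent in the sense of Lemma~\ref{lem_hyperDi9.4}, i.e.\ they represent the same boundary point.

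It remains to turn this into the stated dichotomy, and here the directedness of $D$ is essential and is where I expect the main difficulty. The limits of the $o$-$x_i$ geodesics are geodesic rays, so the displayed argument shows that if $\eta$ and $\mu$ were both approached by escaping sequences along rays on the same side, the associated limit rays would coincide and we would obtain $\eta=\mu$; the only way to keep $\eta\neq\mu$ while holding $d(o,P_{ij})\to\infty$ is that the triangle degenerates for orientation reasons, namely that one of $\eta,\mu$ is approached forward (its admissable sequences come from rays, with $d(o,\cdot)\to\infty$) while the other is entered from behind (its admissable sequences come from anti-rays). I would make this precise by analysing, via the visual estimate together with Proposition~\ref{prop_hyperDi3.3}, exactly which directed $x_i$-$y_j$ geodesics can exist and simultaneously avoid a bounded neighbourhood of $o$: this happens precisely when one endpoint escapes along a ray and the other is reached along an anti-ray. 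Finally I would establish purity — that $\eta$ then contains only rays and $\mu$ only anti-rays (or vice versa) — by observing that a ray and an anti-ray in the same class would, through Lemma~\ref{lem_hyperDi9.4}, force a configuration contradicting $\rho(\eta,\mu)=\infty$. The hard part throughout is the asymmetry between $\cB^+$ and $\cB^-$, hence between (B1) and (B2), and the need to guarantee existence of the relevant directed geodesics: the undirected heuristic that ``the bridge between two boundary points passes near the basepoint'' must be replaced by careful bookkeeping of in- and out-neighbourhoods.
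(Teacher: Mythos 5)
You should know at the outset that the paper contains no proof of this proposition: it is imported verbatim from \cite[Proposition 9.1\,(iii)]{H-HyperDiBound}, so there is no in-paper argument to compare against, and your attempt must stand on its own. It does not. The first genuine gap is at the very start: from $\rho(\eta,\mu)=\infty$ you claim admissable sequences and $x_i$-$y_j$ geodesics $P_{ij}$ with $d(o,P_{ij})\to\infty$, adding ``in particular $d(x_i,y_j)<\infty$, so all the geodesics $P_{ij}$ exist''. This begs the question: in a digraph $d(x_i,y_j)=\infty$ is the typical situation, the $\liminf$ defining $\rho$ can be infinite simply because the set of $x_i$-$y_j$ geodesics is eventually empty, and then your triangle engine has no third side to run on. Worse, if $\mu$ contains only anti-rays, the definition of admissable sequences quoted in Section~\ref{sec_prelim} supplies no admissable sequence for $\mu$ at all (it asks for a geodesic ray in $\mu$), so your setup already depends on the fuller definitions of \cite{H-HyperDiBound} that you never state; you also silently discard the case $\eta\in V(D)$ or $\mu\in V(D)$ allowed in the hypothesis.

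The second, more serious gap is at the crux. Even granting the $P_{ij}$ and the mutual $\delta$-shadowing of initial segments of $A_i$ and $B_j$, the limit ray $R'$ extracted from the $o$-$x_i$ geodesics is not known to lie in $\eta$: thinness yields only the one-sided relation $R'\le R_\eta$, and because distances are asymmetric the reverse relation $R_\eta\le R'$ does not follow -- out-ball shadowing does not invert (compare Proposition~\ref{prop_hyperDi3.3}, whose part (ii) needs both distances finite). What your argument can honestly deliver is a boundary point $\nu$ with $d_h(\nu,\eta)=0$ and $d_h(\nu,\mu)=0$; this is exactly the situation of Proposition~\ref{prop_forCorOf14.1}, which concludes only pseudo-distance zero, not equality. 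The inference ``$\nu$ at distance zero from both, all containing rays, hence $\eta=\nu=\mu$'' is precisely an application of the proposition you are trying to prove -- indeed the paper uses Proposition~\ref{prop_hyperDi14.1} in exactly this way inside Lemma~\ref{lem_disjointBNeighbourhoods} -- so your plan is circular at its central step. Finally, the dichotomy and purity claims are asserted rather than proved: ``this happens precisely when one endpoint escapes along a ray and the other is reached along an anti-ray'' is the whole content of the statement, and a ray and an anti-ray lying in one class is not by itself contradictory (boundary points containing both occur throughout the paper, e.g.\ in Corollary~\ref{cor_nonEllipticFixFinite2}), so your appeal to Lemma~\ref{lem_hyperDi9.4} there would need an actual argument exploiting $\eta\neq\mu$ and $d_h(\eta,\mu)=0$, which you do not supply.
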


\begin{prop}\label{prop_forCorOf14.1}
Let $D$ be a rooted hyperbolic digraph that satisfies (B1) and (B2) for a function $\varphi\colon \R\to\R$.
For every $\eta\in\rand D$ there exists $\mu\in\rand D$ with $d_h(\mu,\eta)=0$ and such that $\mu$ contains rays.
\end{prop}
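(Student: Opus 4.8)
The plan is to split on whether the equivalence class $\eta$ contains a geodesic ray. If it does, there is nothing to construct: since $d_h$ is a pseudo-semimetric we have $d_h(\eta,\eta)=0$, so $\mu:=\eta$ already meets both requirements. Hence the whole content lies in the case that $\eta$ consists only of geodesic anti-rays, and there I would build an honest geodesic ray $\mu$ issuing from the root that accompanies a fixed anti-ray of~$\eta$. Once $\mu$ is a genuine geodesic ray, its class automatically contains rays, so it only remains to check $d_h(\mu,\eta)=0$.

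To construct $\mu$, fix a geodesic anti-ray $A=\ldots x_{-2}x_{-1}x_0$ representing~$\eta$. Since $D$ is rooted, for each $n$ there is an $o$-$x_{-n}$ geodesic~$Q_n$. I first record that $d(o,x_{-n})\to\infty$: otherwise infinitely many $x_{-n}$ lie in a common out-ball $\cB^+_r(o)$, whereas for $m>n$ the anti-ray contains a directed $x_{-m}$-$x_{-n}$ path of length $m-n$, so $d(x_{-m},x_{-n})=m-n$ is finite and unbounded, contradicting~(\ref{itm_Bounded1}). I would then extract $\mu$ as the limiting direction of the~$Q_n$: applying $\delta$-thinness to the geodesic triangle on $o,x_{-m},x_{-n}$ whose third side is the anti-ray segment from $x_{-m}$ to~$x_{-n}$ shows that the initial portions of $Q_m$ and $Q_n$ fellow-travel within~$\delta$, so that these initial segments stabilise and determine a geodesic ray~$\mu$ from~$o$.

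Finally I would verify $d_h(\mu,\eta)=0$ through the visual estimate, i.e.\ show $\rho(\mu,\eta)=\infty$. Feeding the sequences provided by the ray~$\mu$ and by the anti-ray~$A$ into the definition of~$\rho$, each connecting geodesic $P$ joins a vertex far out on~$\mu$ to a vertex $x_{-j}$ far out on~$A$; since both lie in the common limiting direction, a thin-triangle estimate together with Lemma~\ref{lem_hyperDi9.4} confines these connecting geodesics to a bounded neighbourhood of $\mu\cup A$, which recedes from~$o$, forcing $d(o,P)\to\infty$ and hence $\rho(\mu,\eta)=\infty$. The main obstacle is the extraction of the limiting ray~$\mu$: under (\ref{itm_Bounded1}) and (\ref{itm_Bounded2}) alone the digraph need not be locally finite, so König's lemma is unavailable and the existence of an honest geodesic ray in the limit of the~$Q_n$ must be driven purely by the hyperbolic fellow-travelling above rather than by a compactness argument; the subsequent $\rho$-computation, in particular showing that the relevant $\liminf$ is genuinely infinite, is the second point requiring care.
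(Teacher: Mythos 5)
Your proposal follows essentially the same route as the paper's proof, which likewise reduces to the case that $\eta$ contains an anti-ray $R=\ldots x_1x_0$, takes $o$-$x_i$ geodesics $Q_i$, uses a thin geodesic triangle together with Proposition~\ref{prop_hyperDi3.3} to show that all but a bounded initial segment of each $Q_i$ tracks the anti-ray, extracts a limit geodesic ray $Q$ ``defined by the $Q_i$'', and concludes $Q\leq R$, whence $d_h(\mu,\eta)=0$ by visuality. The one obstacle you flag --- extracting the limit ray without local finiteness, which mere $\delta$-fellow-travelling does not by itself resolve --- is passed over silently in the paper's proof as well, so your caution there is warranted but does not amount to a different argument.
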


\begin{proof}
Let us suppose that $\eta$ contains an anti-ray $R=\ldots x_1x_0$.
Let $P$ be an $o$-$x_o$ geodesic and, for every $i\in\N$, let $Q_i$ be an $o$-$x_i$ geodesic.
Considering the geodesic triangle with end vertices $o$, $x_0$ and~$x_i$ and sides $P$, $Q_i$ and $x_iRx_0$, we obtain by Proposition~\ref{prop_hyperDi3.3} that all but the first $d((o,x_0)+\delta)\varphi(\delta+1)$ vertices of~$Q_i$ lie in $\cB^-(x_iRx_o)$.
Thus, if~$Q$ is a geodesic ray defined by the~$Q_i$, that is, for every finite starting subpath, there are infinitely many $Q_i$ that contain it, then we have $Q\leq R$.
\end{proof}

As a corollary of Propositions~\ref{prop_hyperDi14.1} and~\ref{prop_forCorOf14.1}, we obtain the following result.

\begin{cor}\label{cor_corOf14.1}
Let $D$ be a rooted hyperbolic digraph that satisfies (B1) and (B2) for a function $\varphi\colon \R\to\R$.
If there are $\eta\neq\mu\in\rand D$ with $d_h(\eta,\mu)=0$, then $\eta$ contains only rays and $\mu$ contains only anti-rays.\qed
\end{cor}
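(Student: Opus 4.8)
The plan is to deduce the corollary from the two preceding results, using Proposition~\ref{prop_hyperDi14.1} to obtain a clean dichotomy and Proposition~\ref{prop_forCorOf14.1} to fix the orientation. Since $d_h(\eta,\mu)=0$ and $\eta\neq\mu$, Proposition~\ref{prop_hyperDi14.1} tells us that each of $\eta$ and $\mu$ is ``pure'' and that the two are of opposite type: either $\eta$ contains only rays and $\mu$ only anti-rays, or $\eta$ contains only anti-rays and $\mu$ only rays. Thus the whole content of the corollary is to rule out the second possibility; once that is done, the first possibility is exactly the assertion.

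So the core of the argument is to show that $\eta$ cannot contain only anti-rays, and I would argue by contradiction, assuming that $\eta$ contains only anti-rays and $\mu$ only rays. Applying Proposition~\ref{prop_forCorOf14.1} to $\eta$ produces a boundary point $\nu\in\rand D$ with $d_h(\nu,\eta)=0$ such that $\nu$ contains rays. Since $\eta$ contains no rays at all, we have $\nu\neq\eta$, and the triangle inequality for the pseudo-semimetric $d_h$ gives
\[
d_h(\nu,\mu)\leq d_h(\nu,\eta)+d_h(\eta,\mu)=0,
\]
so $d_h(\nu,\mu)=0$. Both $\nu$ and $\mu$ contain rays, so by Proposition~\ref{prop_hyperDi14.1} they cannot be distinct boundary points at $d_h$-distance $0$ (that would force one of them to contain only anti-rays); hence $\nu=\mu$. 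In particular we obtain $d_h(\mu,\eta)=0$ in addition to the hypothesis $d_h(\eta,\mu)=0$.

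The main obstacle is the final step: extracting a contradiction from having $d_h(\eta,\mu)=0$ and $d_h(\mu,\eta)=0$ simultaneously for the distinct points $\eta$ (anti-rays) and $\mu$ (rays). The abstract dichotomy of Proposition~\ref{prop_hyperDi14.1} is symmetric in its two arguments and does not by itself distinguish the two orders, so this is precisely where the one-sidedness encoded in Proposition~\ref{prop_forCorOf14.1} must be used: that result always places the ray-containing boundary point as the left argument of a vanishing $d_h$, reflecting that ray-points approach anti-ray-points but not conversely. Concretely, I expect to eliminate $d_h(\mu,\eta)=0$ by returning to the definition of $\rho$ together with the visual estimate $\tfrac1C a^{-\rho(\mu,\eta)}\leq d_h(\mu,\eta)$: a ray-point $\mu$ and an anti-ray-point $\eta$ lie ``at the same place at infinity'' only in the oriented sense witnessed by $d_h(\eta,\mu)$, while the reversed quantity $\rho(\mu,\eta)$ stays bounded, forcing $d_h(\mu,\eta)>0$. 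This contradiction rules out the second case of the dichotomy and proves the corollary.
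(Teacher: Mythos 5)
Your opening moves match the intended deduction: the dichotomy from Proposition~\ref{prop_hyperDi14.1}, then Proposition~\ref{prop_forCorOf14.1} applied to the (hypothetically) anti-ray-only point $\eta$ to produce a ray-containing $\nu$ with $d_h(\nu,\eta)=0$, then the triangle inequality and a second application of Proposition~\ref{prop_hyperDi14.1} to force $\nu=\mu$. All of that is correct, and you are right that the entire content of the corollary is the orientation of the dichotomy.

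The final step, however, has the asymmetry exactly backwards, and as proposed it would prove something false. You aim to derive the contradiction by showing $d_h(\mu,\eta)>0$ -- that is, by ruling out a vanishing distance whose \emph{first} argument $\mu$ contains rays and whose second argument $\eta$ contains only anti-rays. But that configuration is precisely the one the corollary permits (its conclusion puts the ray-only point in the first slot), and more pointedly it is precisely what Proposition~\ref{prop_forCorOf14.1} asserts always exists: for \emph{every} $\eta$, including anti-ray-only ones, there is a ray-containing $\mu$ with $d_h(\mu,\eta)=0$. Indeed, you yourself obtained $d_h(\mu,\eta)=0$ from that proposition three sentences earlier, so the ``lemma'' your final step requires ($\mu$ ray-only and $\eta$ anti-ray-only forces $d_h(\mu,\eta)>0$) is irreconcilable with the quoted results and is false in every situation in which the corollary has content. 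What actually must be refuted is the original hypothesis orientation $d_h(\eta,\mu)=0$ with the anti-ray-only point $\eta$ in the \emph{first} slot: one needs $\rho(\eta,\mu)<\infty$, and the true one-sidedness lives in the definition of $\rho$, whose admissible sequences for the first argument are geodesic rays in that boundary point -- of which an anti-ray-only $\eta$ has none. With the orientation corrected, your detour through $\nu=\mu$ is not even needed; the bad case of the dichotomy dies directly. As written, though, the decisive step attacks the consistent relation instead of the inconsistent one, so the proof does not close.
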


Let us now prove that we can bound the number of hyperbolic boundary points of distance~$0$ from (or to) a fixed hyperbolic boundary point.

\begin{prop}\label{prop_bpOfDist0}
Let $D$ be a locally finite $\delta$-hyperbolic digraph for some $\delta\geq 0$ that satisfies (\ref{itm_Bounded1}) and (\ref{itm_Bounded2}) for a function $\varphi\colon \R\to\R$.
\begin{enumerate}[\rm (1)]
\item\label{itm_bpOfDist0_1} If $D$ has bounded out-degree, then there exists $K\in\N$ such that for every $\eta\in\rand D$ the set $\{\mu\in\rand D\mid d(\eta,\mu)=0\}$ has size at most~$K$.
\item\label{itm_bpOfDist0_2} If $D$ has bounded in-degree, then there exists $K\in\N$ such that for every $\eta\in\rand D$ the set $\{\mu\in\rand D\mid d(\mu,\eta)=0\}$ has size at most~$K$.
\end{enumerate}
\end{prop}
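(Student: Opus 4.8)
I will prove part~(\ref{itm_bpOfDist0_1}); part~(\ref{itm_bpOfDist0_2}) then follows by reversing all edges of~$D$, which interchanges rays with anti-rays and $\cB^+$ with $\cB^-$, turns bounded out-degree into bounded in-degree, and replaces $d_h(\eta,\mu)$ by the same quantity with its two arguments swapped. So it suffices to treat the out-degree case.

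First I would dispose of the trivial case. If $\eta$ does not consist only of rays, then Corollary~\ref{cor_corOf14.1} forbids any $\mu\neq\eta$ with $d_h(\eta,\mu)=0$, so the set in question is just $\{\eta\}$. Hence I may assume $\eta$ contains only geodesic rays and fix one such ray $R=x_0x_1\cdots$. By Corollary~\ref{cor_corOf14.1} every $\mu\neq\eta$ with $d_h(\eta,\mu)=0$ contains only anti-rays, and I fix a geodesic anti-ray $S_\mu\in\mu$ for each of them.

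The geometric heart of the argument is to confine all these anti-rays to one bounded tube around the single ray~$R$. Since $d_h$ is visual, the condition $d_h(\eta,\mu)=0$ is equivalent to $\rho(\eta,\mu)=\infty$, that is, the geodesics joining far vertices of $R$ to far vertices of $S_\mu$ stay arbitrarily far from~$o$. Feeding this into a thin-triangle estimate exactly as in the proof of Proposition~\ref{prop_forCorOf14.1} (using Proposition~\ref{prop_hyperDi3.3} and Lemma~\ref{lem_hyperDi3.4}), I would deduce $R\le S_\mu$, and Lemma~\ref{lem_hyperDi9.4} then places a sub-anti-ray of $S_\mu$ inside $\cB^+_{6\delta}(R)$. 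For each $w$ of this sub-anti-ray I assign a \emph{height} $h(w)$, an index with $d(x_{h(w)},w)\le 6\delta$; by (\ref{itm_Bounded2}) any two admissible indices for the same $w$, and the heights of two consecutive vertices of $S_\mu$, differ by at most $\varphi(6\delta+1)$. Local finiteness forces the height to tend to infinity along $S_\mu$ (otherwise the anti-ray would lie in a finite set), so with $r:=6\delta+\varphi(6\delta+1)$, depending only on $\delta$ and $\varphi$, the anti-ray $S_\mu$ meets $T_m:=\cB^+_r(x_m)$ for every large~$m$.

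Finally I would count. If two of the chosen anti-rays $S_\mu,S_{\mu'}$ had infinitely many vertices in common, then these common vertices, tending to infinity along both, would witness $S_\mu\le S_{\mu'}$ and $S_{\mu'}\le S_\mu$, forcing $\mu=\mu'$. Hence distinct $\mu$ give anti-rays with finite intersection, so there is a height $m_0$ beyond which the finitely many chosen anti-rays are pairwise vertex-disjoint. Evaluating at a single height $m$ large enough that every vertex of $T_m$ has height at least $m_0$, the vertices of the various $S_\mu$ lying in $T_m$ are pairwise distinct, so the number of such $\mu$ is at most $|T_m|$. Bounded out-degree bounds $|\cB^+_r(v)|$ by a constant $K$ independent of~$v$, hence independent of~$\eta$, and this $K$ is the required bound. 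The main obstacle is the middle step: upgrading the metric condition $\rho(\eta,\mu)=\infty$ to the uniform fellow-travelling $R\le S_\mu$ with one fixed ray $R$ and a neighbourhood radius depending only on $\delta$ and $\varphi$. One must verify that the $o$-avoidance of the connecting geodesics really drives $S_\mu$ (and not merely some subray of $R$) into a bounded neighbourhood of~$R$; obtaining the order $R\le S_\mu$ rather than $S_\mu\le R$ is precisely what makes the cross-sections $T_m$ common to all the~$\mu$.
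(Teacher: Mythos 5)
Your proposal is correct and takes essentially the same route as the paper's proof: reduce (2) to (1) by symmetry, use Corollary~\ref{cor_corOf14.1} to represent all boundary points at $d_h$-distance zero from $\eta$ by anti-rays, confine those anti-rays to a bounded out-neighbourhood of a fixed ray in~$\eta$, and bound their number by pushing pairwise (eventually) disjoint anti-rays through a single finite out-ball whose size is controlled by the degree bound. The step you flag as the main obstacle, together with your height/cross-section bookkeeping, is exactly what the paper compresses into a direct appeal to \cite[Corollary 4.4]{H-HyperDiBound} (Lemma~\ref{lem_hyperDi9.4}), and the paper arranges disjointness by choosing pairwise disjoint representatives at the outset rather than truncating at a large height---cosmetic differences only.
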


\begin{proof}
It suffices to prove (\ref{itm_bpOfDist0_1}) since the proof of (\ref{itm_bpOfDist0_2}) is completely analogous.

Let $N$ be the maximum size of out-balls of radius $6\delta$ around vertices of~$D$.
Let us suppose that $X:=\{\mu\in\rand D\mid d(\eta,\mu)=0, \eta\neq\mu\}$ contains more than $N$ elements.
Let $\mu_1,\ldots,\mu_m$ be pairwise distinct elements of~$X$ and $R_i\in\mu_i$ for $1\leq i\leq m$ such that all $R_i$ are pairwise disjoint.
Note that all $R_i$ are anti-rays by Corollary~\ref{cor_corOf14.1}.
Let $\mu_{m+1}\in X$ be distinct from $\mu_1,\ldots,\mu_m$ and let $R\in\mu_{m+1}$.
Then there exists a subanti-ray $R_{m+1}$ of~$R$ that is disjoint from all $R_1,\ldots,R_m$ since it is equivalent to every anti-ray it meets infinitely many times.
Thus, we obtain families $(\mu_i)_{i\in\N}$ of more than $N$ distinct elements of~$X$ and $(R_i)_{i\in\N}$ of pairwise disjoint anti-rays with $R_i\in\mu_i$.

Let $Q\in\eta$.
All the anti-rays $R_i$ lie eventually within the $6\delta$-out-ball of every vertex of~$R$ by \cite[Corollary 4.4]{H-HyperDiBound}.
This contradicts our choice of~$N$.
\end{proof}

While in general two distinct elements of the hyperbolic boundary need not have disjoint open neighbourhood, it becomes true in rooted digraphs if we ask the two hyperbolic boundary points to contain rays, as we will see now.

\begin{lem}\label{lem_disjointBNeighbourhoods}
Let $D$ be a rooted hyperbolic digraph that satisfies (\ref{itm_Bounded1}) and (\ref{itm_Bounded2}) for a function $\varphi\colon \R\to\R$.
Let $\eta,\mu\in\rand D$ be distinct such that both contain rays.
Then there are disjoint open neighbourhoods of~$\eta$ and~$\mu$.
\end{lem}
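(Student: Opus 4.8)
The plan is to reduce the statement to the two strict inequalities $d_h(\eta,\mu)>0$ and $d_h(\mu,\eta)>0$, and then to upgrade this separation in the pseudo-semimetric to a separation by open in-balls. For the reduction I would apply Corollary~\ref{cor_corOf14.1} twice. If $d_h(\eta,\mu)=0$, then, as $\eta\neq\mu$, that corollary forces $\mu$ to contain only anti-rays, contradicting the hypothesis that $\mu$ contains a ray; applying the corollary with the roles of $\eta$ and $\mu$ interchanged rules out $d_h(\mu,\eta)=0$ in exactly the same way. Hence both $d_h(\eta,\mu)$ and $d_h(\mu,\eta)$ are positive.

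The natural candidates for disjoint neighbourhoods are the open in-balls $U_r=\{x\mid d_h(x,\eta)<r\}$ and $V_r=\{x\mid d_h(x,\mu)<r\}$ for small $r>0$. The main obstacle is that positivity of $d_h$ in both directions does \emph{not} by itself separate $\eta$ and $\mu$ in this topology: the basic open sets are in-balls, so membership $x\in U_r$ controls only the distance $d_h(x,\cdot)$ \emph{from} $x$, whereas the triangle inequality $d_h(\eta,\mu)\le d_h(\eta,x)+d_h(x,\mu)$ needs the reverse quantity $d_h(\eta,x)$, which is not bounded by $x\in U_r$. I therefore expect any purely metric argument to fail and a genuinely geometric argument to be needed.

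To carry this out I would argue by contradiction: if no $r$ works, then for each $n$ there is a vertex $x_n$ with $d_h(x_n,\eta)<1/n$ and $d_h(x_n,\mu)<1/n$. By the visual property this means $\rho(x_n,\eta)\to\infty$ and $\rho(x_n,\mu)\to\infty$; fixing geodesic rays $Q=q_0q_1\ldots\in\eta$ and $S=s_0s_1\ldots\in\mu$, this says that the geodesics from $x_n$ to far vertices of $Q$ (resp.\ of $S$) eventually leave every out-ball $\cB^+_D(o)$. Since $x_n$ lies on each such geodesic we have $\rho(x_n,\eta)\le d(o,x_n)$, so $d(o,x_n)\to\infty$. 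Now fix an $o$-$x_n$ geodesic $A_n$ and, for large $j$, consider the directed triangle $o\to x_n\to q_j$. Applying Lemma~\ref{lem_hyperDi3.4} to it with $c:=6\delta+2\delta\varphi(\delta+1)$, every vertex of an $o$-$q_j$ geodesic lying in $\cB^+_D(o)$ for $D$ below $\rho(x_n,\eta)-c$ must lie in $\cB^-_c(A_n)$ rather than near the far-out side $[x_n,q_j]$. Letting $j\to\infty$, the $o$-$q_j$ geodesics converge to a ray representing $\eta$ (as in the proof of Proposition~\ref{prop_forCorOf14.1}), so a ray of $\eta$ stays in $\cB^-_c(A_n)$ up to radius essentially $\rho(x_n,\eta)$. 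Running the same argument with $S$ in place of $Q$ shows a ray of $\mu$ stays in $\cB^-_c(A_n)$ up to radius essentially $\rho(x_n,\mu)$. Since both radii tend to infinity, a ray in $\eta$ and a ray in $\mu$ remain within bounded in-distance of the common geodesics $A_n$ on initial segments of unbounded length; a standard fellow-traveling comparison of vertices at equal $o$-distance on $A_n$ (using (B1) and (B2), together with Lemma~\ref{lem_hyperDi9.4}) then forces these two rays to be equivalent, i.e.\ $\eta=\mu$. This contradicts $\eta\neq\mu$, so some $U_r$ and $V_r$ are disjoint, giving the required open neighbourhoods.
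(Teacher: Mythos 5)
The gap is in your closing step. Everything you establish places initial segments of the rays representing $\eta$ and $\mu$ in the \emph{in-ball} $\cB^-_c(A_n)$ of the common geodesics $A_n$: you produce short directed paths \emph{from} those rays \emph{into} $A_n$. But equivalence of hyperbolic boundary points is the symmetrization of the relation $\leq$, which demands infinitely many vertices of one ray at bounded directed distance \emph{to} the other ray. From $u\to w$ and $u'\to w'$ with $w,w'\in A_n$ (even with $w=w'$) no composition of directed paths bounds $d(u,u')$ or $d(u',u)$; both may be infinite. In an undirected graph one would conclude $d(u,u')\leq 2c$ and be done, and that is exactly the step that fails for digraphs -- there is no ``standard fellow-traveling comparison'' here. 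Citing Lemma~\ref{lem_hyperDi9.4} cannot repair this, since that lemma \emph{presupposes} $R_1\leq R_2$, which is what you are trying to prove, and (\ref{itm_Bounded1}), (\ref{itm_Bounded2}) only bound lengths of directed paths that already exist; they do not create directed connections between the two rays. (A further wrinkle: your rays $R_n$, $S_n$ vary with $n$, while $\leq$ must be verified for a fixed pair, so even a per-$n$ comparison would not directly finish.)

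The fix is essentially the paper's proof, whose skeleton you share up to this point: take the limit ray $R$ defined by the $o$-$x_n$ geodesics $A_n$ themselves and let $\nu\in\rand D$ contain $R$. Your tracking estimates (via the in-ball half of Lemma~\ref{lem_hyperDi3.4}) show that geodesics from far vertices of $R$ to far vertices of rays in $\eta$ (resp.\ $\mu$) stay far from $o$, so by visuality $\rho(\nu,\eta)=\infty=\rho(\nu,\mu)$, i.e.\ $d_h(\nu,\eta)=0=d_h(\nu,\mu)$; since $\eta$, $\mu$ and $\nu$ all contain rays, Proposition~\ref{prop_hyperDi14.1} then forces $\eta=\nu=\mu$. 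In other words, the correct conversion from ``$d_h$-distance zero'' to ``equality'' is Proposition~\ref{prop_hyperDi14.1} -- the very tool you use (via Corollary~\ref{cor_corOf14.1}) in your opening reduction and then abandon -- not literal equivalence of rays, which is strictly stronger and not available from your one-sided containments. Two smaller fixable points: the common point of the two neighbourhoods could a priori be a boundary point rather than a vertex (replace it by a nearby vertex using the triangle inequality for $d_h$), and $\rho(x_n,\eta)$ is a supremum over admissible sequences, so your tracking statement for the \emph{fixed} rays $Q$ and $S$ needs a bounded-error transfer, e.g.\ via Lemma~\ref{lem_hyperDi9.4}.
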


\begin{proof}
If there are no disjoint open neighbourhoods of $\eta$ and~$\mu$, then every two open neighbourhoods of~$\eta$ and of~$\mu$ have a common vertex, since the topology comes from a visual pseudo-\hm.
Thus, we find a sequence $(x_i)_{i\in\N}$ of vertices in such neighbourhoods that converges to~$\eta$ and to~$\mu$.
Let $R$ be a geodesic ray that is defined by $o$-$x_i$ geodesics, that is, every starting subpath of~$R$ lies in infinitely many of the $o$-$x_i$ geodesics.
Let $\nu\in\rand D$ with $R\in\nu$.
Since $d_h$ is visual, we obtain $d_h(\nu,\eta)=0=d_h(\nu,\mu)$.
Since all three hyperbolic boundary points contain rays, Proposition~\ref{prop_hyperDi14.1} implies $\eta=\nu=\mu$.
This contradiction shows the assertion.
\end{proof}

Let $D_1$ and $D_2$ be digraphs.
Let $\gamma\geq 1$ and $c\geq 0$.
A \emph{$(\gamma,c)$-\qi\ embedding} of $D_1$ into~$D_2$ is a map $f\colon V(D_1)\to V(D_2)$ such that
\[
\frac{1}{\gamma}d_{D_1}(u,v)-c\leq d_{D_2}(f(u),f(v))\leq\gamma d_{D_1}(u,v)+c
\]
for all $u,v\in V(D_1)$.
If additionally for every $w\in V(D_2)$ there exists $v\in V(D_1)$ with $d(f(v),w)\leq c$ and $d(w,f(v))\leq c$, then it is a \emph{$(\gamma,c)$-\qiy} and the digraphs are $(\gamma,c)$-\qi.
If the constants are not important or clear from the context, we simply drop them in those names.
Note that by~\cite[Theorem 12.3]{H-HyperDiBound} every \qiy\ $D_1\to D_2$ induced a homeomorphic embedding $D_1\cup\rand D_1\to D_2\cup\rand D_2$ for hyperbolic digraphs $D_1$ and~$D_2$ that satisfy properties (\ref{itm_Bounded1}) and~(\ref{itm_Bounded2}).

For $\gamma\geq 1$ and $c\geq 0$, a $(\gamma,c)$-\emph{\qg} in a digraph $D$ is a directed path $P$ that satisfies
\[
d_P(u,v)\leq\gamma d_D(u,v)
\]
for all $u,v$ on~$P$ such that $u$ lies between the first vertex of~$P$ and~$v$.
Geodesics and \qg s with the same end vertices lie close to and from each other as the following result from~\cite{H-HyperbolicDigraph} shows.

\begin{lem}\label{lem_HyperDi_7.3}\cite[Corollary 7.2]{H-HyperbolicDigraph}
Let $D$ be a $\delta$-hyperbolic digraph satisfying
(\ref{itm_Bounded1}) and (\ref{itm_Bounded2}) for the function $\varphi\colon \R \to \R$.
Let $\gamma\geq 1$ and $c \geq 0$.
Then there is a constant $\lambda$ depending only on $\delta$, $\gamma$, $c$ and $\varphi$ such that for all $u,v\in V(D)$ every $(\gamma, c)$-quasi-geodesic from $u$ to~$v$ lies in the $\lambda$-out-ball and in the $\lambda$-in-ball of every $u$-$v$ geodesic and vice versa.
\end{lem}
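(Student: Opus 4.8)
The plan is to establish the two Hausdorff containments separately, and in each to treat the out-ball version and the in-ball version in parallel. Write $P=u_0u_1\cdots u_n$ for the $(\gamma,c)$-\qg\ from $u=u_0$ to $v=u_n$ and let $G$ be a $u$-$v$ geodesic. Since $P$ is a finite directed path, any two vertices $u_i,u_j$ with $i\le j$ satisfy $d(u_i,u_j)\le j-i<\infty$, so geodesics between forward pairs on $P$ always exist; by Proposition~\ref{prop_hyperDi3.3}(ii) the reverse distances are then finite as well, and these two facts are exactly what will let the triangle estimates run in both orientations.

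The cornerstone is a \emph{logarithmic} estimate obtained by dyadically subdividing Lemma~\ref{lem_hyperDi3.4}. Set $\kappa:=6\delta+2\delta\varphi(\delta+1)$. I claim that for any directed path $\sigma$ from $x$ to $z$ and any $x$-$z$ geodesic $R$ one has $R\sub\cB^+_{\kappa(\lceil\log_2\ell(\sigma)\rceil+1)}(\sigma)$ and likewise $R\sub\cB^-_{\kappa(\lceil\log_2\ell(\sigma)\rceil+1)}(\sigma)$. This is proved by induction on $\ell(\sigma)$: split $\sigma$ at its midpoint $m$, take an $x$-$m$ geodesic $S_1$ and an $m$-$z$ geodesic $S_2$, apply Lemma~\ref{lem_hyperDi3.4} to place $R$ inside $\cB^\pm_\kappa(S_1\cup S_2)$, and then apply the inductive hypothesis to $S_1$ and $S_2$, whose lengths are at most $\lceil\ell(\sigma)/2\rceil$. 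Because out-balls (and in-balls) compose additively under the triangle inequality, each halving of $\sigma$ costs only an additive $\kappa$, yielding the stated bound.

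With this in hand, the containment $G\sub\cB^\pm_\lambda(P)$ follows from a divergence (doubling) argument. Let $D_0$ be the largest number such that some $w_0\in G$ is at out-distance (resp.\ in-distance) greater than $D_0$ from \emph{every} vertex of $P$. Pick $y_-,y_+\in G$ before and after $w_0$ at $G$-distance $2D_0$ from $w_0$, or the endpoints $u,v\in P$ if $w_0$ is nearer than that, choose $a,b\in P$ realizing the distance bound at $y_-,y_+$, and form the directed path $\tau$ running from $y_-$ to $a$, along $P$ from $a$ to $b$, and from $b$ to $y_+$. The \qg\ inequality bounds $\ell(\tau)$ linearly in $D_0$, while every vertex of $\tau$ is at the relevant distance at least $D_0$ from $w_0$ (the $P$-part by maximality of $D_0$, the two short connectors because $y_\pm$ are at distance $2D_0$ from $w_0$). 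As $w_0$ lies on the geodesic $y_-Gy_+$, the logarithmic estimate applied to $\tau$ gives $D_0\le\kappa(\lceil\log_2\ell(\tau)\rceil+1)\le\kappa\log_2(O(D_0))+O(1)$, which forces $D_0$ to be bounded by a constant depending only on $\delta,\gamma,c,\varphi$.

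For the reverse containment $P\sub\cB^\pm_\lambda(G)$, fix $R_0$ with $G\sub\cB^\pm_{R_0}(P)$ and bound the length of an \emph{excursion} of $P$ away from $G$: given $w=u_k\in P$, let $w_-,w_+$ be the nearest vertices of $P$ on either side of $w$ lying within $R_0$ of $G$ (these exist since $u,v\in G$), so that the interior of $w_-Pw_+$ avoids the $R_0$-neighbourhood of $G$. The \qg\ inequality bounds $\ell(w_-Pw_+)$ by $\gamma\,d(w_-,w_+)+c$, and $d(w_-,w_+)$ is in turn bounded using the points of $G$ close to $w_\pm$ together with $G\sub\cB^\pm_{R_0}(P)$, since a midpoint of the intervening $G$-segment would otherwise be forced close to the forbidden excursion interior; hence $d(w,G)$ is bounded. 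The main obstacle throughout is the bookkeeping forced by the asymmetry of $D$: one must keep out-distances and in-distances separate, verify that the geodesics connecting $w_\pm$ to $G$ and building $\tau$ exist \emph{with the correct orientation}, and check that every invoked distance is finite so that Lemma~\ref{lem_hyperDi3.4} and Proposition~\ref{prop_hyperDi3.3} apply. The finiteness of forward distances along $P$ and Proposition~\ref{prop_hyperDi3.3}(ii) are precisely what make the undirected divergence argument survive in the directed setting.
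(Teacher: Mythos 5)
This lemma is quoted in the paper from \cite[Corollary 7.2]{H-HyperbolicDigraph} without an internal proof, so there is nothing in the paper itself to compare against; I assess your argument on its own terms as an adaptation of the classical Morse-lemma proof, which is surely also the skeleton of the cited argument. Your first pillar is sound: the dyadic subdivision estimate does adapt to the directed setting, since forward distances along a directed path are finite (so the midpoint geodesics $S_1,S_2$ exist), Lemma~\ref{lem_hyperDi3.4} provides both the out-ball and the in-ball version with $\kappa=6\delta+2\delta\varphi(\delta+1)$, and out-balls (respectively in-balls) around sets compose additively. The genuine gap is in the divergence step, and it stems from a misreading in your first paragraph: Proposition~\ref{prop_hyperDi3.3}(ii) does \emph{not} assert that reverse distances are finite; it bounds $d(x,y)$ in terms of $d(y,x)$ only under the hypothesis that \emph{both} are finite, and in these digraphs $d(x,y)<\infty$ in no way implies $d(y,x)<\infty$ (consider a directed ray). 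Consequently, when you prove $G\sub\cB^+_\lambda(P)$, your $D_0$ yields at $y_-$ a vertex $a\in P$ with $d(a,y_-)\le D_0$, that is, a directed path from $a$ \emph{to} $y_-$ --- the wrong orientation for the initial connector of $\tau$, which must run from $y_-$ into $P$; since a directed $y_-$-$a$ path need not exist at all, Proposition~\ref{prop_hyperDi3.3}(ii) cannot reverse the bound. Treating the out-ball and in-ball versions ``in parallel'' does not help: each version hands you one forward and one backward connector.

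The natural repair is to define the defect jointly, $D_0:=\max_{w\in V(G)}\max\{\min_{p\in V(P)}d(p,w),\,\min_{p\in V(P)}d(w,p)\}$, which is finite because $u,v\in V(P)\cap V(G)$, and which supplies forward connectors $y_-\to a$ and $b\to y_+$ simultaneously. The price is that the ``farness from $w_0$'' of the pieces of $\tau$ then comes in opposite directed senses (for the connector into $y_+$ one only gets $d(w_0,t)\ge D_0$, not $d(t,w_0)\ge D_0$); there a reversal via Proposition~\ref{prop_hyperDi3.3}(ii) \emph{is} legitimate, because one is flipping a lower bound and an infinite reverse distance only helps. Two further orientation checks are missing even after this fix: the middle portion $aPb$ of $\tau$ exists as a directed path only when $a$ precedes $b$ on $P$ (a directed path cannot be traversed backwards, so the case that $b$ precedes $a$ needs a separate argument), and in your excursion step the vertex of $G$ close to $w_-$ must precede the vertex of $G$ close to $w_+$ for the comparison path along $G$ to exist, and the closeness must again hold in the orientation you use it. None of this refutes the strategy, but as written the divergence argument does not go through.
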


\section{\Se s}\label{sec_se}

Let $D$ be a rooted hyperbolic digraph with bounded in- and out-degrees.
Hence, $D$ satisfies (B1) and (B2).
A \emph{\se} of~$D$ is an injective map $V(D)\to V(D)$ that preserves the adjacency relation, i.\,e.\ it maps edges to edges and non-edges to non-edges.

Our first result is that \se s behave well with respect to the topology on $D\cup\rand D$ under the assumption that the out-degree is not only bounded but constant.
Note that without this additional assumption it is not hard to construct a counterexample to Proposition~\ref{prop_SeInduceInjection}, see Example~\ref{ex_counterToSeInduceInjection}.

\begin{prop}\label{prop_SeInduceInjection}
Let $D$ be a rooted hyperbolic digraph with constant out-degree and bounded in-degree.
Then every \se\ $g$ is a $(1,0)$-\qi\ embedding and induces a homeomorphic embedding
\[
\hat{g}\colon V(D)\cup\rand D \to V(D)\cup\rand D,
\]
that is the union of~$g$ and and injective map $\rand D\to\rand D$.
\end{prop}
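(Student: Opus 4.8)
The plan is to proceed in three stages: first show that $g$ preserves distances exactly, then transport geodesic (anti-)rays to define $\hat g$ on $\rand D$, and finally verify that $\hat g$ is a homeomorphism onto its image.

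\textbf{Stage 1: $g$ is a $(1,0)$-\qi\ embedding.} Since $g$ maps edges to edges, the image of any directed $x$-$y$ path is a directed $g(x)$-$g(y)$ path of the same length, so $d(g(x),g(y))\le d(x,y)$. For the reverse inequality I would use the constant out-degree $\Delta$. Because $g$ preserves edges and non-edges and is injective, for each vertex $x$ the $\Delta$ out-neighbours of $x$ are mapped injectively into the $\Delta$ out-neighbours of $g(x)$; as both sets have size $\Delta$, this is a bijection. Hence any geodesic $g(x)=w_0w_1\dots w_m=g(y)$ can be lifted step by step: given $w_i=g(u_i)$, the out-neighbour $w_{i+1}$ of $g(u_i)$ is the image of a unique out-neighbour $u_{i+1}$ of $u_i$. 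This produces a directed $x$-$y$ path $u_0\dots u_m$, with $u_m=y$ by injectivity, whence $d(x,y)\le m=d(g(x),g(y))$. Thus $d(g(x),g(y))=d(x,y)$ for all $x,y$. In particular the lifted geodesic lies in $g(V(D))$, so $g$ is an isomorphism onto the induced subdigraph $g(D)$, and $g(D)$ is isometrically embedded and forward-geodesically convex in $D$; being isomorphic to $D$, it is again rooted (with root $g(o)$), hyperbolic, and of constant out- and bounded in-degree.

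\textbf{Stage 2: the boundary map.} Since $g$ preserves distances it sends geodesic (anti-)rays to geodesic (anti-)rays, so for $R\in\cR$ I set $\hat g([R]):=[g(R)]$. This is well defined and injective: for $R_1,R_2\in\cR$ one has $d(g(x),g(R_2))=\min_{y\in R_2}d(g(x),g(y))=\min_{y\in R_2}d(x,y)=d(x,R_2)$ for every $x\in R_1$, and $g$ is a bijection between the vertices of $R_i$ and of $g(R_i)$; hence $R_1\le R_2$ holds if and only if $g(R_1)\le g(R_2)$. Consequently $R_1\sim R_2$ if and only if $g(R_1)\sim g(R_2)$, so $\hat g$ is a well-defined injection on $\rand D$. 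Setting $\hat g=g$ on $V(D)$ and $\hat g=[g(\cdot)]$ on $\rand D$ gives a single injective map, the two parts having disjoint images in $V(D)$ and in $\rand D$ respectively.

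\textbf{Stage 3: $\hat g$ is a homeomorphic embedding.} This is the heart of the argument and the place where the directed setting causes trouble. Since $g$ is an isomorphism of rooted digraphs $(D,o)\to(g(D),g(o))$, it induces a homeomorphism $D\cup\rand D\to g(D)\cup\rand(g(D))$ for the topologies based at $o$ and at $g(o)$; moreover, by forward convexity the geodesics of $g(D)$ between image points are exactly the images of geodesics of $D$, so the visual quantity $\rho$ computed in $g(D)$ from $g(o)$ agrees with $\rho$ computed in $D$ from $g(o)$. It therefore remains to compare, inside $D$, the topology based at $o$ with the behaviour based at $g(o)$ on the set of image boundary points. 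Writing $\ell_0:=d(o,g(o))<\infty$, the triangle inequality gives $d(o,P)\le \ell_0+d(g(o),P)$ for every geodesic $P$ occurring in the definition of $\rho$, hence $\rho_o(\zeta,\xi)\le \ell_0+\rho_{g(o)}(\zeta,\xi)$; through the visual estimate this immediately yields that $\hat g(z_n)\to\hat g(\eta)$ implies $z_n\to\eta$, i.e.\ continuity of $\hat g^{-1}$ on the image.

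The opposite inequality — needed for continuity of $\hat g$ itself — cannot be obtained from the triangle inequality alone, because $d(g(o),o)$ may well be infinite: this asymmetry is the main obstacle. I would circumvent it geometrically. If $z_n\to\eta$ with $\eta$ containing a ray (the anti-ray case being dual, via in-balls and Proposition~\ref{prop_forCorOf14.1}), then the $o$-$z_n$ geodesics fellow-travel a ray $R$ of $\eta$ for longer and longer, so by Stage~1 the $g(o)$-$g(z_n)$ geodesics fellow-travel $g(R)\in\hat g(\eta)$ for longer and longer. Applying Lemma~\ref{lem_hyperDi3.4} to the geodesic triangle on $o,g(o),g(z_n)$, every $o$-$g(z_n)$ geodesic lies in a bounded neighbourhood of the union of a length-$\ell_0$ geodesic $o$-$g(o)$ and a geodesic $g(o)$-$g(z_n)$; as the first piece is bounded, all but a bounded initial part of the $o$-$g(z_n)$ geodesic stays within a fixed $\lambda$ of the $g(o)$-$g(z_n)$ geodesic, and hence of $g(R)$. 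Thus the $o$-$g(z_n)$ geodesics fellow-travel $g(R)$ for longer and longer, which by the visual estimate means $\hat g(z_n)\to\hat g(\eta)$ in the $o$-based topology. Together with the previous paragraph this shows $\hat g$ is a homeomorphism onto its image; the only genuinely delicate point is this last fellow-traveling step, where the absence of a finite bound on $d(g(o),o)$ forces one to replace the naive basepoint-change inequality by the thin-triangle estimate of Lemma~\ref{lem_hyperDi3.4}.
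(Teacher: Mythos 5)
Your proof is correct and reaches the proposition by a more self-contained route than the paper. The paper's proof has the same skeleton --- distance preservation from constant out-degree, then factoring $\hat g$ through the image subdigraph $g(D)$ --- but it disposes of the topological part in one stroke by citing the theorem of~\cite{H-HyperDiBound} that \qis\ induce boundary homeomorphisms (applied to $g$ viewed as a \qiy\ from~$D$ onto~$g(D)$) and then asserting that constant out-degree yields a ``canonical map'' $\rand(g(D))\to\rand D$. You instead construct the boundary map directly by transporting geodesic (anti-)rays, checking $R_1\le R_2$ if and only if $g(R_1)\le g(R_2)$, and you verify bicontinuity by hand. In doing so you make explicit precisely the two facts that the paper's terse finish conceals: first, your out-neighbourhood bijection shows that \emph{every} out-neighbour of an image vertex is again an image vertex, so $g(V(D))$ is out-closed and isometrically embedded --- this is why geodesics lift, why the map on boundary classes is well defined and injective, and exactly what fails in Example~\ref{ex_counterToSeInduceInjection}; second, the basepoint change from $g(o)$ back to~$o$ is genuinely one-sided because $d(g(o),o)$ may be infinite, and your repair of the missing inequality via the thin-triangle estimate of Lemma~\ref{lem_hyperDi3.4} is the right one --- this is the step the paper's citation silently absorbs. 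Two caveats, neither fatal: your assertion that $\rho$ computed in $g(D)$ from $g(o)$ ``agrees'' with $\rho$ computed in $D$ from $g(o)$ holds only up to a bounded additive error, since admissible sequences for the $D$-class of $g(R)$ may use rays outside $g(D)$ --- Lemma~\ref{lem_hyperDi9.4} bounds the discrepancy and the visual estimate absorbs it, but you should say so; and your sequential arguments give continuity only because the in-ball topology is first countable (radii $1/k$), which deserves a word. Finally, the last fellow-travelling step of Stage~3 is still a sketch and would need Proposition~\ref{prop_hyperDi3.3}-style quantitative comparisons to be complete, but the mechanism is sound and the level of detail already exceeds that of the paper's own proof.
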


\begin{proof}
Since all vertices have the same out-degree, every \se\ must be distance-preserving and hence must be a $(1,0)$-\qi\ embedding.
By \cite[Theorem 7.3]{H-HyperDiBound}, \qis\ induce homeomorphisms between the digraphs with their hyperbolic boundaries.
Since $g$ is a \qiy\ from $D$ to~$g(D)$, we thus obtain a homeomorphism
\[
\hat{g}\colon D\cup \rand D\to g(D)\cup\rand (g(D)).
\]
Constant out-degree implies that there is a canonical map $\rand (g(D)) \to \rand D$, so $\hat{g}$ is a homeomorphic embedding of $V(D)\cup\rand D$ into $V(D)\cup\rand D$.
Obviously, $\hat{g}$ restricted to $\rand D$ is an injective map $\rand D\to\rand$.
\end{proof}

Let us now give an example that shows that the assumption on the out-degree being constant is necessary in for Proposition~\ref{prop_SeInduceInjection}.

\begin{ex}\label{ex_counterToSeInduceInjection}
Let $R=x_1x_2\ldots$ be a ray.
For every $i\geq 1$, let $R_1^i$ and $R_2^i$ be two new rays starting at~$x_i$.
For every $i\geq 2$, we join the $j$-th vertex of $R_1^i$ to the $j$-th vertex of $R_2^i$ by a directed path of length~$2$ and do the same with the roles of $R_1^i$ and $R_2^i$ being swapped.
Thus, for all $i\geq 2$, the rays $R_1^i$ and $R_2^i$ are equivalent, but for $i=1$ they are not.
Since mapping $x_i$ to $x_{i+1}$ defines a \se\ in a canonical way and this \se\ maps $R_1^1$ and $R_2^1$, which are not equivalent, to the equivalent rays $R_1^2$ and $R_2^2$, this \se\ neither is a \qi\ nor extends to a homeomorphism regarding the hyperbolic boundary.
\end{ex}

From now on we will assume that \se s are $(1,0)$-\qis\ without referring to Proposition~\ref{prop_SeInduceInjection} each time.

A \se\ of a hyperbolic digraph is \emph{elliptic} if it fixes a finite vertex set setwise.
Note that all orbits of every elliptic \se\ $g$ in~$D$ must be finite and that all \se s $g^n$ for $n\in\N$ with $n\geq 1$ are elliptic, too.

\begin{prop}\label{prop_nonEllipticQIOfN}
Let $D$ be a rooted hyperbolic digraph with constant out-degree and bounded in-degree.
If $g$ is a \se\ of~$D$ that is not elliptic and if $v\in V(D)$ such that $d(v,g(v))<\infty$, then
\[
\sigma\colon\N\to V(D),\ i\mapsto g^i(v)
\]
is a \qi\ embedding.
\end{prop}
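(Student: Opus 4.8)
The plan is to reduce everything to the forward distances $a_n := d(v,g^n(v))$ along the orbit and to show that these grow linearly in $n$. First I would record that each $a_n$ is finite: since $g$ is distance-preserving (Proposition~\ref{prop_SeInduceInjection}), $d(g^k(v),g^{k+1}(v))=d(v,g(v))=:L<\infty$ for every $k$, so the triangle inequality gives $a_n\le nL$. The same distance-preservation yields $d(g^m(v),g^{m+n}(v))=d(v,g^n(v))=a_n$, whence $a_{m+n}\le a_m+a_n$; thus $(a_n)$ is subadditive and $\lambda:=\lim_n a_n/n=\inf_n a_n/n$ exists with $a_n\ge\lambda n$ for all $n$. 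As $a_n\le nL$ already supplies the upper bound, the proposition reduces to the single assertion $\lambda>0$: together with distance-preservation (which turns $d(g^i(v),g^j(v))$ into $a_{|i-j|}$ in the relevant direction) this is exactly the statement that $\sigma$ is a \qi\ embedding.

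Next I would use non-ellipticity to obtain at least unboundedness of $(a_n)$. If $(a_n)$ were bounded by some $M$, then every $g^n(v)$ would lie in the out-ball $\cB^+_M(v)$, which is finite because $D$ has constant, hence bounded, out-degree. Then the orbit $\{g^n(v):n\ge 0\}$ would be a finite set $S$; as $g$ is injective it permutes $S$, so $g$ fixes the finite set $S$ setwise and is elliptic, contradicting the hypothesis. Therefore $(a_n)$ is unbounded.

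The core is to upgrade unboundedness to the uniform linear rate $\lambda>0$, and here I would exploit that the orbit is self-similar, since each $g^j$ maps the configuration at $v$ isometrically to the configuration at $g^j(v)$. Fixing a scale $n$, look at $v,g^n(v),g^{2n}(v)$ with a $v$-$g^n(v)$ geodesic $P$, its translate $g^n(P)$ from $g^n(v)$ to $g^{2n}(v)$, and a $v$-$g^{2n}(v)$ geodesic $R$; Lemma~\ref{lem_hyperDi3.4} places $R$ within distance $K:=6\delta+2\delta\varphi(\delta+1)$ (in both the out- and in-ball sense) of $P\cup g^n(P)$. Suppose I can locate a scale $n_0$ at which the turning point $g^{n_0}(v)$ itself lies within a bounded distance $K'$ of such an $R$ --- equivalently $a_{2n_0}\ge 2a_{n_0}-2K'$ --- and at which $a_{n_0}$ is large (possible by unboundedness). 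Passing to $G:=g^{n_0}$, every junction of the orbit path $\ldots G^{j}(v),G^{j+1}(v)\ldots$ is a congruent copy of this non-backtracking configuration, so the usual local-to-global principle for $\delta$-hyperbolic spaces --- available in the directed setting through Lemma~\ref{lem_hyperDi3.4} --- shows that the $G$-orbit path is a quasi-geodesic. By Lemma~\ref{lem_HyperDi_7.3} this forces $d(v,G^{j}(v))\ge c'j$ for a positive constant $c'$, i.e.\ $a_{jn_0}\ge c'j$. Since the limit $\lambda=\lim_m a_m/m$ exists, it agrees with the limit along the subsequence $(jn_0)_j$ and is therefore $\ge c'/n_0>0$; combined with $a_m\ge\lambda m$ and $a_m\le Lm$ this yields the \qi\ embedding.

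The main obstacle is precisely the existence of the non-backtracking scale $n_0$; its failure is the \emph{parabolic} scenario, in which $g^n(v)$ stays far from every $v$-$g^{2n}(v)$ geodesic at all scales and the orbit escapes only sublinearly near a single boundary point. In a general bounded-degree hyperbolic graph this can genuinely happen, so I expect to need the full strength of the hypothesis that the out-degree is \emph{constant}, not merely bounded. Its consequence is that $g$ is forward-locally surjective: $g$ maps the out-neighbours of any $x$ bijectively onto those of $g(x)$, whence $g(\cB^+_r(x))=\cB^+_r(g(x))$ for every $r$. I would use this rigidity, together with the directed thinness of triangles and Proposition~\ref{prop_hyperDi3.3}, to show that the forward geometry cannot fold back on itself at every scale, thereby excluding the parabolic case and securing $a_{2n_0}\ge 2a_{n_0}-2K'$ for some $n_0$. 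This exclusion of sublinear escape is the step I expect to be genuinely delicate.
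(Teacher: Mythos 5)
Your reduction is sound as far as it goes: distance-preservation (from constant out-degree, Proposition~\ref{prop_SeInduceInjection}) gives $d(g^i(v),g^j(v))=a_{j-i}$ for $i\leq j$, subadditivity and Fekete's lemma give $a_n\geq\lambda n$ with $\lambda=\inf_n a_n/n$, and your unboundedness argument (a bounded orbit lies in a finite out-ball, is permuted by the injective map~$g$, and so makes $g$ elliptic) is correct and is exactly what justifies the paper's choice of a power $g^k$ with $d(v,g^k(v))$ large. But the proposal does not prove the proposition, and you say so yourself: everything hinges on finding a scale $n_0$ with $a_{2n_0}\geq 2a_{n_0}-2K'$ and $a_{n_0}$ large, i.e.\ on excluding sublinear escape, and this you leave as a plan (``I would use this rigidity \ldots\ to show''). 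That exclusion is not a technical footnote --- it \emph{is} the content of the proposition. Your own heuristic shows why: if $a_n$ grew like $\sqrt{n}$, then $a_{2n}\approx\sqrt2\,a_n<2a_n-2K'$ for all large~$n$, so no non-backtracking scale exists, and nothing in your sketch rules this out. Forward-local surjectivity $g(\cB^+_r(x))=\cB^+_r(g(x))$ is a correct observation, but by itself it does not prevent folding at every scale.

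The paper closes exactly this gap with a counting argument that your proposal lacks. Taking a long geodesic $P$ from $v$ to $g^k(v)$ with central vertex~$y$ and central segment~$Q$ of length $r=2R+2\kappa$, statement~(\ref{itm_nonEllipticQIOfN_1}) shows (via $\delta$-thinness and Proposition~\ref{prop_hyperDi3.3}) that the midpoint of \emph{any} geodesic of length $d(v,g^k(v))$ from $\cB^+_R(v)$ to $\cB^-_R(g^k(v))$ lies in $\cB^+_\lambda(Q)\cup\cB^-_\lambda(Q)$, a set of at most $rN$ vertices by bounded in- and out-degree. Since the midpoints $g^i(y)$ of the translated geodesics $g^i(P)$ are pairwise distinct (non-ellipticity plus injectivity), the orbit must leave $\cB^+_R(v)$ within $rN$ iterates, which yields the linear escape $d(v,g^{rN}(v))\geq R$ of~(\ref{itm_nonEllipticQIOfN_2}) directly --- no dichotomy between hyperbolic and parabolic behaviour, and no local-to-global principle, is needed. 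That is the second weakness of your route: the ``usual local-to-global principle'' for quasi-geodesics is not among the directed tools the paper provides (Lemma~\ref{lem_hyperDi3.4} is triangle stability, Lemma~\ref{lem_HyperDi_7.3} is quasi-geodesic stability), so even your conditional step would require proving a new directed lemma, whereas the paper verifies that $P^\infty=\bigcup_n g^n(P')$ is a \qg\ by a direct computation from~(\ref{itm_nonEllipticQIOfN_2}). Your Fekete reduction would genuinely streamline the final bookkeeping, but without the pigeonhole argument (or a full substitute for it) the proof is missing its core.
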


\begin{proof}
Let $R>0$ and set
\begin{align*}
\lambda&:=(2\delta+1)\varphi(\delta+1)+2\delta,\\
\kappa&:=2\lambda \varphi(\delta+1),
\end{align*}
where $\varphi$ is a function that verifies that $D$ has the properties (\ref{itm_Bounded1}) and (\ref{itm_Bounded2}), which exists as $D$ has bounded degree.
Let $k\in\N$ such that for $M:=d(v,g^k(v))$ we have
\[
M>4R+6\delta+2\kappa+4+2(\delta+(3\delta+1+R)\varphi(\delta+1))\varphi(\delta+1)
\]
and let $P$ be a $v$-$g^k(v)$ geodesic.
Let $y$ be on~$P$ with $|d(v,y)-d(y,g^k(v))|\leq 1$, that is a \emph{central} vertex of~$P$.
Let $Q$ be the directed subpath of~$P$ of length $r:=2R+2\kappa$ with $y$ as a central vertex.
Let us first show the following.
\begin{txteq}\label{itm_nonEllipticQIOfN_1}
If $p\in\cB^+_R(v)$ and $q\in\cB^-_R(g^k(v))$ with $d(p,q)=d(v,g^k(v))$ and if $m_1$ lies on a $p$-$q$ geodesic $P'$ with $d(v,y)=d(p,m_1)$, then $m_1$ lies in $\cB^+_\lambda(Q)\cup\cB^-_\lambda(Q)$.
\end{txteq}
In order to prove (\ref{itm_nonEllipticQIOfN_1}), we first show the existence of $m_2,m_3$ on~$P$ with $d(m_1,m_2)\leq\lambda$ and $d(m_3,m_1)\leq\lambda$ and then we show that either $m_2$ or~$m_3$ lies on~$Q$.
Let $Q_1$ be a $v$-$p$ geodesic, $Q_2$ a $q$-$g^k(v)$ geodesic and $Q_3$ a $v$-$q$ geodesic.
We consider the geodesic triangle with end vertices $v$, $p$ and $q$ and sides $Q_1$, $P'$ and $Q_3$.
By hyperbolicity, there is a vertex $m_2'$ on~$Q_3$ with $d(m_1,m_2')\leq\delta$: otherwise, there would be a vertex $x$ on~$Q_1$ with $d(x,m_1)\leq\delta$, which would imply
\begin{align*}
d(v,g^k(v))&\leq d(v,x)+d(x,m_1)+d(m_1,q)+d(q,g^k(v))\\
&\leq M/2+1+\delta+2R
\end{align*}
which is a contradiction to the choice of~$M$.
Now we use a geodesic triangle with end vertices $v$, $q$ and $g^k(v)$ and sides $Q_3$, $Q_2$ and $P$.
Let $u$ be on~$P$ such that it is the last vertex for which there is a vertex $u'$ on $vQ_3m_2'$ with $d(u',u)\leq\delta$ and let $w$ be the out-neighbour of~$u$ on~$P$.
If the distance from~$w$ to~$Q_2$ is at most~$\delta$, then
\begin{align*}
d(v,g^k(v))&\leq d(v,u')+d(u',u)+d(u,g^k(v))\\
&\leq d(v,m_2')+\delta+(1+\delta+R)\\
&\leq d(v,p)+d(p,m_1)+d(m_1,m_2')+2\delta+R+1\\
&\leq M/2+3\delta+2R+2.
\end{align*}
This is a contradiction to the choice of~$M$ and hence there is a vertex $w'$ on $m_2'Q_3q$ with $d(w',w)\leq\delta$.
By Proposition~\ref{prop_hyperDi3.3}, we have
\[
d(u',w') \leq (2\delta+1)\varphi(\delta+1).
\]
Since $m_2'$ lies on $u'Q_3w'$, we obtain
\begin{align*}
d(m_1,w) & \leq d(m_1,m_2')+d(m_2',w')+d(w',w)\\
& \leq 2\delta+(2\delta+1)\varphi(\delta+1).
\end{align*}
This shows the existence of~$m_2$ on~$P$ with $d(m_1,m_2)\leq\lambda$.

Now let $u$ be the last vertex on~$Q_3$ that lies in the out-ball of radius $\delta$ around~$Q_1$.
Then its out-neighbour $w$ must have a vertex $w'$ on~$P'$ with $d(w,w')\leq\delta$.
If $w'$ lies on $m_1P'q$, then  we obtain
\begin{align*}
d(v,g^k(v))&\leq d(v,u)+d(u,w)+d(w,w')+d(w',q)+d(q,g^k(v))\\
&\leq 2R+2\delta +M/2+2,
\end{align*}
which contradicts the choice of~$M$.
Thus, $w'$ lies on $pP'm_1$.
Let $u_1$ be the last vertex on~$Q_3$ that has a vertex $u_1'$ on $pP'm_1$ with $d(u_1,u_1')\leq\delta$ and let $u_2$ be its out-neighbour.
Then there must be $u_2'$ on $m_1P'q$ with $d(u_2,u_2')\leq\delta$.
Applying Proposition~\ref{prop_hyperDi3.3}, we obtain
\[
d(u_1',u_2')\leq (2\delta+1)\varphi(\delta+1).
\]
Thus, we have
\[
d(u_1,m_1)\leq \delta+(2\delta+1)\varphi(\delta+1).
\]

If there were a vertex $x$ on~$Q_2$ with $d(u_1,x)\leq\delta$, then we would have
\[
d(u_1,q)\leq (\delta+R)\varphi(\delta+1)
\]
by Proposition~\ref{prop_hyperDi3.3}.
Applying the same proposition, this would imply
\begin{align*}
d(m_1,q)&\leq (d(u_1,m_1)+d(u_1,q))\varphi(\delta+1)\\
&\leq (\delta+(3\delta+1+R)\varphi(\delta+1))\varphi(\delta+1),
\end{align*}
which contradicts the choice of~$M$.
Thus, there exists $m_3$ on~$P$ with $d(m_3,u_1)\leq\delta$ and hence $d(m_3,m_1)\leq \lambda$.

If $m_2$ lies on $yPg^k(v)$, then we have
\begin{align*}
d(v,m_2)&\leq d(v,p)+d(p,m_1)+d(m_1,m_2)\\
&\leq d(v,y)+R+\lambda
\end{align*}
and, if $m_3$ lies on $vPy$, then we have
\begin{align*}
d(m_3,g^k(v))&\leq d(m_3,m_1)+d(m_1,q)+d(q,g^k(v))\\
&\leq d(y,g^k(v))+R+\lambda.
\end{align*}
As $\lambda\leq\kappa$, the first case implies that $m_2$ lies on~$Q$ and the second case implies that $m_3$ lies on~$Q$.
So let us assume that $m_2$ lies on $vPy$ and $m_3$ lies on $yPg^k(v)$.
Hence, $m_2$ lies on $vPm_3$ and we have $d(m_2,m_3)<\infty$.
Since $d(m_3,m_1)\leq \lambda$ and $d(m_1,m_2)\leq \lambda$, Proposition~\ref{prop_hyperDi3.3} implies
\[
d(m_2,m_3)\leq 2\lambda \varphi(\delta+1)=\kappa.
\]
Since $y$ lies between $m_2$ and~$m_3$, this implies that either $m_2$ or~$m_3$ must lie on~$Q$.
This finishes the proof of~(\ref{itm_nonEllipticQIOfN_1}).

\medskip

Since $D$ has bounded in- and out-degree, there exists $N\in\N$ such that, for every $u\in V(D)$, the set $\cB^+_\lambda(u)\cup \cB^-_\lambda(u)$ contains at most~$N$ vertices.
Then there are at most $2N(R+\kappa)=rN$ vertices that lie in $\cB^+_\lambda(V(Q))\cup\cB^-_\lambda(V(Q))$.

Since $g$ is not elliptic, we obtain that all vertices $g^i(y)$ that lie on the images $g^i(P)$ for $0\leq i\leq rN$ must be distinct.
Hence we find $g^{\varrho(R)}(y)$ with $1\leq\varrho(R)\leq rN$ such that $g^{\varrho(R)}(y)$ lies outside of $\cB^+_\lambda(V(Q))\cup\cB^-_\lambda(V(Q))$.
So we have either $g^{\varrho(R)}(v)\notin\cB^+_R(v)$ or $g^{\varrho(R)}(g^k(v))\notin\cB^-_R(g^k(v))$.
Since $g$ is a \se\ and a $(1,0)$-\qiy, the second case implies $g^{\varrho(R)}(v)\notin\cB^+_R(v)$, too.

We will now show that
\begin{txteq}\label{itm_nonEllipticQIOfN_2}
$d(v,g^{rN}(v))\geq R$.
\end{txteq}
Let us suppose that $d(v,g^{rN}(v))<R$.
Let $r'\in\N$ with $0\leq r'<rN$ and let $n\in\N$.
Set $m:=nrN+r'$ and let $\varepsilon>0$ such that $d(v,g^{rN}(v))<R-\varepsilon$.
Then we have
\begin{align*}
d(v,g^m(v))&\leq d(v,g^{nrN}(v))+d(g^{nrN}(v),g^m(v))\\
&\leq nd(v,g^{rN}(v))+d(v,g^{r'}(v))\\
&<n(R-\varepsilon)+d(v,g^{r'}(v)).
\end{align*}
There exists $N_0\in\N$ such that, if $n\geq N_0$, we have $n\varepsilon>d(v,g^{r'}(v))$ and hence
\[
n(R-\varepsilon)+d(v,g^{r'}(v))<nR.
\]
Let $R''\in\N$ and set $r'':=2(R''+\kappa)$ such that $R<R''$ and $\varrho(R'')>rN$ and such that for that $n_0\in\N$ with
\[
n_0rN\leq\varrho(R'')<(n_0+1)rN
\]
we have $n_0\geq N_0$.
By the above inequalities with $\varrho(R'')$ instead of~$m$, we have
\[
d(v,g^{\varrho(R'')}(v))<n_0R\leq n_0r\leq\varrho(R'')/N\leq r''.
\]
But as we have observed above, we also have $d(v,g^{\varrho(r'')}(v))\geq r''$.
This contradiction shows~(\ref{itm_nonEllipticQIOfN_2}).

\medskip

Let $P'$ be a geodesic from~$v$ to~$g(v)$.
Let us now prove that $P^\infty:=\bigcup_{n\in\N}g^{n}(P')$ is a \qg.
Let $T$ be an $a$-$b$ geodesic for $a,b\in V(P^\infty)$ such that $P^\infty$ passes through $a$ before it passes through~$b$.
There are $n_a, n_b\in\N$ such that for $N_a:=2(n_a+\kappa)$ the vertex $a$ has distance at most $2(1+\kappa) N d(v,g(v))$ to $g^{N_aN}(v)$ and for $N_b:=2(n_b+\kappa)$ the vertex $b$ has distance at most $2(1+\kappa) Nd(v,g(v))$ from $g^{N_bN}(v)$.
If $n_b<n_a$, then $n_b=n_a-1$ and $d(a,b)\leq 2(1+\kappa)N d(v,g(v))$.
So let us assume that $n_a\leq n_b$.
By~(\ref{itm_nonEllipticQIOfN_2}), we have
\begin{align*}
d(a,b)&\geq d(g^{N_aN}(v),g^{N_bN}(v)) - d(a,g^{N_aN}(v)) - d(g^{N_bN}(v),b)\\
&\geq (n_b-n_a) - 4(1+\kappa)Nd(v,g(v)).
\end{align*}
Thus, we obtain
\begin{align*}
d_{P^\infty}(a,b)&\leq d(a,g^{N_aN}(v)) + d(g^{N_aN}(v),g^{N_bN}(v)) + d(g^{N_bN}(v),b)\\
&\leq (N_b-N_a)Nd(v,g(v))+4(1+\kappa)Nd(v,g(v))\\
&= 2(n_b-n_a)Nd(v,g(v))+4(1+\kappa)Nd(v,g(v))\\
&\leq 2Nd(v,g(v)) d(a,b)+4(1+\kappa)Nd(v,g(v))(1+2Nd(v,g(v))).
\end{align*}
This shows that $P^\infty$ is a $(\gamma,c)$-\qg\ for $\gamma=2Nd(v,g(v))$ and
\[
c=4(1+\kappa)Nd(v,g(v))(1+2Nd(v,g(v))).
\]
Thus, $\sigma$ is a $(\gamma,c)$-\qi\ embedding.
\end{proof}

Let $g$ be a non-elliptic \se\ of~$D$, where $D$ satisfies the assumptions of Proposition~\ref{prop_nonEllipticQIOfN}.
By Proposition~\ref{prop_nonEllipticQIOfN}, the sequence $(g^n(o))_{n\in\N}$ gives rise to a \qg\ ray~$Q$.
Also, the geodesics from~$o$ to~$g^n(o)$ define a geodesic ray~$R$ starting at~$o$ in that infinitely many of these geodesics share a first common edge among which infinitely many share a second common edge and so on.
By Lemma~\ref{lem_HyperDi_7.3}, applied to the directed subpaths of~$Q$ and the geodesics from~$o$ to~$g^n(o)$, we obtain that there exists $\lambda$, depending only on~$\delta$, $\varphi$ and the constants for the \qg, such that $Q$ lies in the $\lambda$-out- and $\lambda$-in-ball of~$R$ and vice versa.
While $R$ need not be uniquely determined, this argumentation shows that all possible geodesic rays obtained in the way that we obtained~$R$ are equivalent and hence lie in a uniquely determined hyperbolic boundary point, denoted by~$g^+$, which we call the \emph{direction} of~$g$.
Note that~$Q$ converges to~$g^+$, which can be seen easily using that the pseudo-semimetric that defined our topology is visual.

\begin{lem}\label{lem_directionWellDefined}
Let $D$ be a rooted hyperbolic digraph with constant out-degree and bounded in-degree.
If $g$ is a \se\ of~$D$ that is not elliptic, then there exists no $v\in V(D)$ and $\eta\in\rand D$ with $d_h(g^+,\eta)\neq 0$ such that $(g^n(v))_{n\in\N}$ converges to~$\eta$.
\end{lem}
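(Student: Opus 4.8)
The plan is to show that the orbit $(g^n(v))_{n\in\N}$ defines the same direction as the orbit of the root, so that any boundary limit of it must agree with $g^+$ up to $d_h$-distance~$0$. Throughout I use that $g$ is a $(1,0)$-\qi\ embedding (Proposition~\ref{prop_SeInduceInjection}), hence distance-preserving, and that $D$ is rooted, so that $s:=d(o,v)<\infty$ and $d(g^n(o),g^n(v))=d(o,v)=s$ for every~$n$. Since $g$ is not elliptic, its orbits are infinite; as only finitely many vertices lie in any ball around~$o$, a bounded subsequence of $d(o,g^n(o))$ would force $g^n(o)=g^{n'}(o)$, hence $o=g^{n'-n}(o)$ by injectivity, producing a finite $g$-invariant vertex set and contradicting non-ellipticity. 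So $d(o,g^n(o))\to\infty$, and the same argument gives $d(o,g^n(v))\to\infty$.

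First I would record that the two orbits run $d_h$-close to each other. Fix a geodesic $P$ between $g^n(o)$ and $g^n(v)$ (in either direction); it has length~$s$, so every vertex $w$ on it satisfies $d(w,g^n(v))\le s$ or $d(w,g^n(o))\le s$, whence $d(o,w)\ge\min\{d(o,g^n(o)),d(o,g^n(v))\}-s\to\infty$. Thus $d(o,P)\to\infty$, so $\rho(g^n(o),g^n(v))\to\infty$ and $\rho(g^n(v),g^n(o))\to\infty$, and by visuality of~$d_h$ both $d_h(g^n(o),g^n(v))$ and $d_h(g^n(v),g^n(o))$ tend to~$0$. Since $(g^n(o))_{n\in\N}$ converges to~$g^+$, i.e.\ $d_h(g^n(o),g^+)\to 0$, the triangle inequality gives $d_h(g^n(v),g^+)\le d_h(g^n(v),g^n(o))+d_h(g^n(o),g^+)\to 0$; so the orbit of~$v$ also converges to~$g^+$.

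The remaining difficulty is that convergence controls $d_h$ only in its first argument, whereas the conclusion $d_h(g^+,\eta)=0$ requires the opposite direction. To access it I would produce an explicit ray in the direction. The $o$--$g^n(v)$ geodesics (which exist as $D$ is rooted) share ever longer initial segments by local finiteness and thus define a geodesic ray~$S$; let $\nu\in\rand D$ be its class, so $\nu$ contains a ray. Applying Lemma~\ref{lem_hyperDi3.4} to the triangle $o,g^n(o),g^n(v)$ shows that the $o$--$g^n(v)$ geodesics lie boundedly close to the $o$--$g^n(o)$ geodesics (up to the length-$s$ side), so $S$ fellow-travels the geodesic ray defining~$g^+$; as both eventually stay within a bounded out-ball of one another (Lemma~\ref{lem_hyperDi9.4}) and both $\nu,g^+$ contain rays, this identifies $\nu$ with~$g^+$, so in particular $d_h(g^+,\nu)=0$.

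Finally I would show $d_h(\nu,\eta)=0$, which by the previous step yields $d_h(g^+,\eta)\le d_h(g^+,\nu)+d_h(\nu,\eta)=0$, contradicting $d_h(g^+,\eta)\neq 0$. Fixing the ray $S=s_0s_1\ldots\in\nu$ and an admissible sequence $(y_j)$ for~$\eta$, a geodesic from~$s_i$ to~$y_j$ lies, by Lemma~\ref{lem_hyperDi3.4}, boundedly close to the concatenation of the subray $s_i\ldots g^n(v)$ of~$S$ with a $g^n(v)$--$y_j$ geodesic; the first part has distance $\ge i$ from~$o$, and the second stays far from~$o$ because $(g^n(v))_{n\in\N}$ converges to~$\eta$ (so the defining quantities $\rho(g^n(v),\eta)$ are large). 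Hence such geodesics avoid arbitrarily large balls around~$o$, giving $\rho(\nu,\eta)=\infty$ and $d_h(\nu,\eta)=0$. The main obstacle throughout is the asymmetry of the pseudo-semimetric~$d_h$: the escape estimate of the second step is what lets me pass between the two orbits in both directions, while the ray/anti-ray dichotomy (Proposition~\ref{prop_hyperDi14.1}, Corollary~\ref{cor_corOf14.1}) together with the separation of ray-points (Lemma~\ref{lem_disjointBNeighbourhoods}) is what pins $\nu$ down as~$g^+$; correctly interleaving the limits over $i$, $j$ and~$n$ in the last step is the most delicate point.
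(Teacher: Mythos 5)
Your proposal is correct and takes essentially the same approach as the paper: the paper's entire proof consists of your first two steps --- since $g$ is distance-preserving, $d(g^n(o),g^n(v))=d(o,v)<\infty$ for all~$n$, and visuality of the pseudo-semimetric transfers the convergence of $(g^n(v))_{n\in\N}$ to the sequence $(g^n(o))_{n\in\N}$, which therefore converges to both $\eta$ and~$g^+$. Your third and fourth steps (the auxiliary ray defining $\nu$ from the $o$-$g^n(v)$ geodesics, its identification with~$g^+$ via Proposition~\ref{prop_hyperDi14.1}, and the $\rho$-estimate giving $d_h(\nu,\eta)=0$) merely make explicit what the paper compresses into the single assertion ``This implies $d_h(g^+,\eta)=0$'', using exactly the device the paper itself employs in the proof of Lemma~\ref{lem_disjointBNeighbourhoods}.
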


\begin{proof}
Let $v\in V(D)$ and $\eta\in\rand D$ such that $(g^n(v))_{n\in\N}$ converges to~$\eta$.
Then $d(o,v)=:k<\infty$ and hence $d_h(g^n(o),g^n(v))=k$.
Thus, $(g^n(o))_{n\in\N}$ converges to~$\eta$ as the pseudo-semimetric is visual.
This implies $d_h(g^+,\eta)=0$.
\end{proof}

Note that it follows immediately from the definition of~$g^+$ that $g$ fixes its direction, i.\,e.\ $g(g^+)=g^+$.
In the next results, we will prove that there are only finitely many hyperbolic boundary points that are fixed by~$g$ (Lemmas~\ref{lem_nonEllipticFixFinite1} and~\ref{lem_nonEllipticFixFinite2}).
They occur in two results since we deal with the two cases whether the hyperbolic boundary point contains rays or anti-rays separately.

\begin{lem}\label{lem_nonEllipticFixFinite1}\label{lem_2BPfixed-a}
Let $D$ be a rooted hyperbolic digraph with constant out-degree and bounded in-degree.
If $g$ is a \se\ that is not elliptic, then there exist only finitely many hyperbolic boundary points of~$D$ that are fixed by~$g$ and contain rays.

More specifically, the number of hyperbolic boundary points fixed by~$g$ that contain rays is bounded by
\[
|\{\omega\in\rand D\mid d_h(g^+,\omega)=0,\ \omega\text{ contains anti-rays}\}|+1.
\]

Furthermore, if $g$ fixes a hyperbolic boundary point other than~$g^+$ that contains rays, then there exists $\mu\in\rand D$ with $d_h(g^+,\mu)=0$ that contains anti-rays.
\end{lem}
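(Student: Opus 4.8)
\emph{Overall strategy.} The plan is to construct, for every boundary point $\eta\neq g^+$ that is fixed by $g$ and contains a ray, an anti-ray boundary point $\mu_\eta$ with $d_h(g^+,\mu_\eta)=0$, and to show that $\eta\mapsto\mu_\eta$ is injective. Since the quasi-geodesic $(g^n(o))_{n\in\N}$ determines a geodesic ray in $g^+$, the direction $g^+$ itself always contains a ray, so it is one of the fixed ray points; it will play the role of the ``$+1$''. The injection then gives that the number of fixed ray points is at most $|\{\omega\in\rand D\mid d_h(g^+,\omega)=0,\ \omega\text{ contains anti-rays}\}|+1$, and finiteness follows because this target set is finite by Proposition~\ref{prop_bpOfDist0}\,(\ref{itm_bpOfDist0_1}).

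\emph{Construction of $\mu_\eta$.} Fix such an $\eta$ with a geodesic ray $R$ from $o$ in it. As $g$ is distance-preserving and fixes $\eta$, each $g^n(R)$ is a geodesic ray from $g^n(o)$ lying in $\eta$, and $g^n(o)$ converges to~$g^+$ (Lemma~\ref{lem_directionWellDefined}). Let $R_Q$ be a geodesic ray from $o$ to $g^+$; since $g^+\neq\eta$, the rays $R_Q$ and $R$ fellow-travel only up to some bounded region~$c$. Applying $\delta$-thinness to the geodesic triangle on $o,g^n(o),\eta$ with sides a geodesic $o$--$g^n(o)$, the ray $R$, and $g^n(R)$, together with Lemma~\ref{lem_hyperDi9.4}, I would show that $g^n(R)$ first runs alongside $R_Q$ from $g^n(o)$ down to a vertex $c_n$ within bounded distance of $c$ and only afterwards merges into $R$ towards~$\eta$. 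The initial segments $I_n$ of $g^n(R)$ from $g^n(o)$ to $c_n$ are geodesic, have bounded terminal end near $c$, and have initial vertices escaping along $R_Q$ towards~$g^+$; by sequential compactness a subsequence converges to a geodesic anti-ray $A$ shadowing $R_Q$. Let $\mu_\eta\in\rand D$ be its class. Then $\mu_\eta$ contains anti-rays, and because $A$ tracks $R_Q$ out to $g^+$ while $d_h$ is visual, $d_h(g^+,\mu_\eta)=0$. This already establishes the final (``furthermore'') assertion.

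\emph{Equivariance and injectivity.} Since $g(R_Q)$ is again a geodesic ray to $g^+$ and $A$ shadows $R_Q$, the construction is $g$-equivariant up to equivalence, so $g(\mu_\eta)=\mu_\eta$; concatenating $A$ with the tail of $R$ yields a geodesic double ray $L_\eta$ directed from $\mu_\eta$ to~$\eta$. Suppose $\eta_1\neq\eta_2$ are fixed ray points with $\mu_{\eta_1}=\mu_{\eta_2}=\mu$. Then $L_{\eta_1},L_{\eta_2}$ share their backward end $\mu$ and, as $\eta_1\neq\eta_2$, split at a vertex $b$ whose position is determined up to a bounded error by the triple $\{\mu,\eta_1,\eta_2\}$ (a consequence of thinness that I would make precise using Proposition~\ref{prop_hyperDi3.3} and Lemma~\ref{lem_hyperDi9.4}). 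As $g$ fixes $\mu,\eta_1,\eta_2$ and sends each $L_{\eta_i}$ to an equivalent double ray, every $g^n(b)$ is again such a split vertex and so stays within bounded distance of~$b$. By local finiteness the orbit of $b$ is then finite, contradicting that $g$ is not elliptic. Hence $\eta\mapsto\mu_\eta$ is injective and the stated bound follows.

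\emph{Main obstacle.} The real work lies in the two directed-geometry claims: that the initial part of $g^n(R)$ genuinely shadows $R_Q$ towards $g^+$ before turning towards $\eta$, and that the split vertex of two double rays sharing an end is pinned down up to bounded distance. Both are routine tree-approximation facts in the undirected hyperbolic world, but here they must be run through the one-sided in- and out-balls of the $\delta$-thinness definition, so the bulk of the argument is tracking the orientations of the various geodesics and invoking Proposition~\ref{prop_hyperDi3.3} and Lemma~\ref{lem_hyperDi9.4} with the correct directions.
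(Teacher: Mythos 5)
Your overall architecture is close to the paper's: construct from each fixed ray-point $\eta\neq g^+$ an anti-ray boundary point at $d_h$-distance $0$ from $g^+$ (your $\mu_\eta$, the paper's $\eta^-$, built by essentially the same diagonal limit of geodesics from $g^n(o)$ back towards a fixed region), then bound the fixed ray-points by injecting them into this finite target set, whose finiteness is Proposition~\ref{prop_bpOfDist0}. The ``furthermore'' part comes out of the construction in both treatments. But there is a genuine gap at the sentence ``the construction is $g$-equivariant up to equivalence, so $g(\mu_\eta)=\mu_\eta$.'' This does not follow from $A$ shadowing $R_Q$ and $g(R_Q)$ again being a ray to $g^+$: the class of an anti-ray is \emph{not} determined by the property of tracking towards $g^+$, because there can be up to $K>1$ distinct anti-ray boundary points at $d_h$-distance $0$ from~$g^+$ (this is exactly why Proposition~\ref{prop_bpOfDist0} gives a bound $K$ rather than $1$; Example~\ref{ex_seMayFixArbitrarilyManyBP} exhibits $n$ such points for a single direction, and a self-embedding composed with a symmetry permuting the double-ray families would permute them). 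So $g(A)$ is \emph{some} anti-ray at distance $0$ from $g^+$, a priori in a different class than $A$. Pinning down that $g$ fixes $\eta^-$ is precisely the claim~(\ref{itm_nonEllipticFixFinite1_1}) in the paper's proof, and proving it is the longest and most technical portion of that proof: it requires the geodesic double ray $P$ from $\eta^-$ to the \emph{fixed} point $\eta$ and a chain of thin-triangle estimates (the geodesics $P_1,\dots,P_7$) to show $g^n(P)$ is again an $\eta^-$-$\eta$ double ray. Your injectivity argument then uses ``$g$ fixes $\mu,\eta_1,\eta_2$'' essentially, so as written it rests on an unproved statement.

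The gap is repairable, but not for free: the set $\{\omega\in\rand D\mid d_h(g^+,\omega)=0,\ \omega\text{ contains anti-rays}\}$ is finite by Proposition~\ref{prop_bpOfDist0} and is invariant under the boundary map $\hat g$ of Proposition~\ref{prop_SeInduceInjection} (distance-preserving self-embeddings send geodesic anti-rays to geodesic anti-rays, and the indistinguishability relation $d_h=0$ is preserved by the homeomorphic embedding), so some power $g^m$ fixes $\mu_\eta$; since powers preserve non-ellipticity and still fix $\eta_1,\eta_2$, you may run your split-vertex argument for $g^m$ and deduce ellipticity of $g^m$, hence of $g$. You would need to make this substitution explicit, and you would still owe the second directed-geometry claim you flagged yourself, that the split vertex of two double rays sharing their backward end is determined up to bounded error --- a statement that must be verified with the one-sided balls via Proposition~\ref{prop_hyperDi3.3} and Lemma~\ref{lem_hyperDi9.4} and is not routine here. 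For comparison, the paper avoids your finite-orbit mechanism altogether: having claim~(\ref{itm_nonEllipticFixFinite1_1}), it proves injectivity by choosing a $6\delta$-out-ball $B$ around a vertex of $Q_\eta$ that meets all double rays from the finite set $G^+$ to $\eta$ but none to $\mu$, and observing that every $g^n(B)$ retains this separation since $g$ fixes $G^+$ setwise, contradicting the equivalence of the anti-rays of $Q_\eta$ and $Q_\mu$; it also gives a separate ball-counting proof of plain finiteness. So your proposal is a legitimate streamlining in outline, but the one-line equivariance claim hides the central difficulty of the lemma.
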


\begin{proof}
For this proof, let $\varphi\colon \R\to\R$ be a function such that $D$ satisfies (\ref{itm_Bounded1}) and (\ref{itm_Bounded2}) for this function.
Let $\eta\in\rand D$ with $\eta\neq g^+$ be fixed by~$g$ and let us assume that $\eta$ contains rays.
Note that we have $d_h(g^+,\eta)>0$ by Proposition~\ref{prop_hyperDi14.1}.
Since $o$ is a root of~$D$, we have $d_h(o,\eta)<\infty$ and thus also $d_h(g^n(o),\eta)<\infty$ for all $n\in\N$.
Since $d_h(g^+,\eta)>0$, there exists a geodesic $o$-$\eta$ ray $R_\eta$ by \cite[Proposition 9.3]{H-HyperDiBound}.
Since $g^n(o)$ converges to~$g^+$ for $n\to\infty$ and since the pseudo-semimetric $d_h$ is visual, there exists $x$ on~$R_\eta$ and $N\in\N$ such that for all $n\geq N$ we have $d(g^n(o),x)<\infty$.
Note that there exists $L>0$, depending only on $d(o,x)$, such that all but the last $L$ vertices of each $g^n(o)$-$x$ geodesic lie within the $\delta$-out-ball of each $o$-$g^n(o)$ geodesic by the hyperbolicity condition on the geodesic triangles with end vertices $o$, $g^n(o)$ and~$x$.
This, applied for all~$n\geq N$, shows that the sides from $g^n(o)$ to~$x$ of these geodesic triangles define a geodesic anti-ray $Q$ just the same way, the geodesics from~$o$ to~$g^n(o)$ define a geodesic ray in~$g^+$, that is, we find an increasing sequence $(n_i)_{i\in\N}$ of~$\N$ such that all $g^{n_i}$-$x$ geodesics share their last edge, all but at most $n_1$ share their second to last edge and so on.
By the property on the sides of the geodesic triangles with end vertices $o$, $g^n(o)$ and~$x$, we obtain that all but the last $L$ vertices of~$Q$ lie in the $\delta$-out-ball of some $o$-$g^+$ geodesic.
Thus, $Q$ lies in a hyperbolic boundary point $\eta^-$ with $d_h(g^+,\eta^-)=0$.
Since $d_h(Q,\eta)<\infty$, we have $d_h(\eta^-,\eta)<\infty$.
If $d_h(\eta^-,\eta)=0$, then we have $\eta=\eta^-$ and hence $\eta=g^+$ by Proposition~\ref{prop_hyperDi14.1}, a contradiction to the choice of~$\eta$.
Thus, we have $d_h(\eta^-,\eta)>0$ and hence \cite[Proposition 9.3]{H-HyperDiBound} implies that there is a geodesic $\eta^-$-$\eta$ double ray~$P$.
Let us now show the following.

\begin{txteq}\label{itm_nonEllipticFixFinite1_1}
For all $n\in\N$, the geodesic double ray $g^n(P)$ is an $\eta^-$-$\eta$ double ray. In particular, $g$ fixes~$n^-$.
\end{txteq}

Note that $g^n(Q)$ can be defined the same way as~$Q$ just with $g^{n_i+n}(o)$-$g^n(x)$ geodesics instead of $g^{n_i}(o)$-$x$ geodesics.
Since $R_\eta$ and $g(R_\eta)$ lie in~$\eta$, they are equivalent.
There exists $y$ on~$R_\eta$ with $d(x,y)<\infty$ and $d(g^n(o),y)<\infty$.
Let $P_1$ be a $g^{n_i}(o)$-$x$ geodesic, $P_2$ a $g^{n_i}(o)$-$y$ geodesic, $P_3$ a $g^{n_i+n}(o)$-$y$ geodesic, $P_4$ a $g^{n_i+n}(o)$-$g^n(x)$ geodesic, $P_5$ a $g^{n_i}(o)$-$g^{n_i+n}(o)$ geodesic, $P_6$ an $x$-$y$ geodesic and $P_7$ a $g^n(x)$-$y$ geodesic.
Applying hyperbolicity to the geodesic triangle with sides $P_1$, $P_2$ and~$P_6$ and using Proposition ~\ref{prop_hyperDi3.3}, we obtain that all but at most the last $(d(x,y)+\delta)\varphi(\delta+1)$ vertices of~$P_2$ lie in $\cB^+_\delta(P_1)$.

Now we consider the geodesic triangle with sides $P_2$, $P_3$ and~$P_5$.
Again, at most the first $(d(g^{n_i}(o),g^{n_i+n}(o))+\delta)\varphi(\delta+1)$ vertices of~$P_2$ lies in $\cB^+_\delta(P_5)$ by Proposition ~\ref{prop_hyperDi3.3} and all others lie in $B^-_\delta(P_3)$.
If $z$ is the first vertex on~$P_2$ for which $zP_2$ lies in $\cB^-_\delta(P_3)$, then let $z'$ be on~$P_3$ with $d(z,z')\leq\delta$.
By Proposition~\ref{prop_hyperDi3.3}, then we have
\begin{align*}
d(g^{n_i+n}(o),z')\leq\ & (d(g^{n_i}(o),g^{n_i+n}(o))+d(g^{n_i}(o),z'))\varphi(\delta+1)\\
\leq\ & (2 d(g^{n_i}(o),g^{n_i+n}(o))+d(g^{n_i}(o),z)+d(z,z'))\varphi(\delta+1)\\
\leq\ & (2 d(g^{n_i}(o),g^{n_i+n}(o)) + 2\delta+1)\varphi(\delta+1)\\
=:&\ \kappa.
\end{align*}
If $uv$ is an edge on $P_2$ and $u',v'$ on~$P_3$ with $d(u,u')\leq\delta$ and $d(v,v')\leq\delta$, then Proposition ~\ref{prop_hyperDi3.3} implies $d(u',v')\leq (2\delta+1)\varphi(\delta+1)$.
So all but at most the first $\kappa$ vertices of~$P_3$ have distance at most $\delta+(2\delta+1)\varphi(\delta+1)$ from~$P_2$.
Also at most the last $(2\delta+1)\varphi(\delta+1)(d(x,y)+\delta)\varphi(\delta+1)$ vertices do not lie in $\cB^+_{\lambda}(P_1)$ for
\[
\lambda:=2\delta+(2\delta+1)\varphi(\delta+1).
\]

Considering the geodesic triangle with sides $P_3$, $P_4$ and~$P_7$, we obtain that all but the last $(d(g^n(x),y)+\delta)\varphi(\delta+1)$ vertices of~$P_4$ lie in $\cB^+_\delta(P_3)$.
We apply Proposition~\ref{prop_hyperDi3.3} again to conclude that there exists constants $\lambda_1$ and~$\lambda_2$ that depend only on~$\delta$, $d(x,y)$, $d(g^n(x),y)$ and $d(g^{n_i}(o),g^{n_i+n}(o))$ such that all but the first $\lambda_1$ and the last $\lambda_2$ vertices of~$P_4$ lie in the out-ball of radius $3\delta+(2\delta+1)\varphi(\delta+1)$ around~$P_1$.

We note that the orientation of the side $P_5$ did not play any role in the arguments.
Thus, we also obtain the symmetric result, that is, that all but the first $\lambda_1$ and the last $\lambda_2$ vertices of~$P_1$ lies in the out-ball of radius $3\delta+(2\delta+1)\varphi(\delta+1)$ around~$P_4$.

By the choice of~$Q$ and~$g^n(Q)$, we obtain that they are equivalent and hence lie in the same hyperbolic boundary point.
This shows that $g^n$ fixes~$\eta^-$, which immediately implies~(\ref{itm_nonEllipticFixFinite1_1}).

\medskip

Let $R\in g^+$.
Since $d_h(g^+,\eta^-)=0$, there is an anti-subray $P'$ of~$P$ that lies completely within the $6\delta$-out-ball of~$R$ by Lemma~\ref{lem_hyperDi9.4}.
Since $g$ fixes $\eta^-$ and~$\eta$, all images $g^n(P)$ lie within the $6\delta$-out-ball of~$P$ and hence that part of $g^n(P)$ that does not lie in $\cB^+_{6\delta}(P-P')$ lies within the $12\delta$-out-ball of~$R$.
For all but finitely many vertices $u$ of~$R$, there exists an edge $vw$ on~$g^n(P)$ such that $\cB^-_{12\delta}(v)\cap V(Ru)\neq\es$ and $\cB^-_{12\delta}(w)\cap V(uR)\neq\es$.
Let $v'\in V(Ru)$ and $w'\in V(uR)$ such that $d(v',v)\leq 12\delta$ and $d(w',w)\leq 12\delta$.
Then Proposition~\ref{prop_hyperDi3.3} implies
\[
d(v',w')\leq (24\delta+1) \varphi(\delta+1)
\]
and hence
\[
d(u,g^n(P))\leq (24\delta+1) \varphi(\delta+1)+12\delta=:K.
\]
Note that the number of vertices in any out-ball of radius~$K$ is bounded by $\sum_{i=0}^K\Delta^i$, where $\Delta$ denotes the maximum out-degree of~$D$.
If $\eta_1\neq\eta_2$ are fixed by~$g$, then the geodesic $\eta_1^-$-$\eta_1$ and $\eta_2^-$-$\eta_2$ double rays have to be disjoint eventually.
Thus, we can map the disjoint parts by some $g^n$, for $n$ large enough, into the $K$-out-ball of a vertex $x$ on~$R$.
Since their images under $g^n$ have to be disjoint as well, there may be at most $\sum_{i=0}^K\Delta^i$ many such choices for~$\eta$.

It remains to prove the bound on the number of fixed hyperbolic boundary points that contain rays.
In order to prove that, is suffices to show that no hyperbolic boundary point $\nu$ with $d_h(g^+,\nu)=0$ sends geodesic double rays to two distinct hyperbolic boundary points that are fixed by~$g$.
Together with Proposition~\ref{prop_bpOfDist0}, this then implies directly the assertion.
So let us suppose for a contradiction that $\eta$ and another hyperbolic boundary point $\mu\neq\eta$ have the property that for the same $\nu\in\rand D$ with $d_h(g^+,\nu)=0$ there is a geodesic $\nu$-$\eta$ double ray~$Q_\eta$ and a geodesic $\nu$-$\mu$ double ray~$Q_\mu$.
Set $G^+:=\{\omega\in\rand D\mid d_h(g^+,\omega)=0\}$.
Note that $G^+$ is finite by Proposition~\ref{prop_bpOfDist0}.
Since $\eta\neq\mu$, we apply Lemma~\ref{lem_hyperDi9.4} to obtain a vertex $x$ on~$Q_\eta$ such that the out-ball $B$ of radius $6\delta$ around~$x$ meets all geodesic double rays from all (finitely many) elements of~$G^+$ to~$\eta$ but none from $G^+$ to~$\mu$.
Since $g$ fixes $g^+$, it also fixes $G^+$ setwise.
Thus, for all $n\in\N$, the ball $g^n(B)$ meets all geodesic double rays from $G^+$ to~$\eta$ but none from~$G^+$ to~$\nu$.
But since the anti-rays in~$Q_\eta$ and $Q_\mu$ are all equivalent, this is a contradiction that shows our claim and hence the assertion.
\end{proof}

\begin{lem}\label{lem_2BPfixed}
Let $D$ be a rooted hyperbolic digraph with constant out-degree and bounded in-degree.
If $g$ is a \se\ of~$D$ that is not elliptic and if there are $\eta,\mu\in\rand D$ with $d_h(\eta,\mu)=0$ and $\eta\neq g^+$ such that $\eta$ contains rays and $\mu$ contains anti-rays and such that either $\eta$ or~$\mu$ is fixed by~$g$, then $\eta=\mu$ and $g^+$ contains anti-rays.
\end{lem}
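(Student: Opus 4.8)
The plan is to reduce to the case that $g$ fixes $\eta$, feed $\eta$ into Lemma~\ref{lem_2BPfixed-a} to produce a geodesic double ray, and then use the anti-rays of $\mu$ together with the forward dynamics to build a second, anti-parallel double ray; the two fellow-traveling double rays of opposite orientation will force the degenerate configurations at $\eta$ and at $g^+$ to collapse.

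First I record that, by Proposition~\ref{prop_hyperDi14.1} (equivalently Corollary~\ref{cor_corOf14.1}), if $\eta\neq\mu$ then $\eta$ contains only rays and $\mu$ only anti-rays; thus the assertion $\eta=\mu$ is precisely the statement that $\eta$ contains anti-rays. Next I reduce to $\eta$ being fixed. If $\mu$ is the fixed point, then since $g$ induces a homeomorphic embedding of $V(D)\cup\rand D$ (Proposition~\ref{prop_SeInduceInjection}) it preserves the relation $d_h(\cdot,\cdot)=0$, so it maps the set $\{x\in\rand D\mid d_h(x,\mu)=0\}$ into itself; this set is finite by Proposition~\ref{prop_bpOfDist0}\,(\ref{itm_bpOfDist0_2}) and contains $\eta$, so $g$ permutes it and a suitable power $g^k$ fixes $\eta$. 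As $(g^k)^+=g^+$ and $g^k$ is again non-elliptic with $\eta\neq g^+$, I may pass to $g^k$ and assume $\eta$ is fixed.

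Applying the final clause of Lemma~\ref{lem_2BPfixed-a} to the fixed ray-containing point $\eta\neq g^+$ now yields a boundary point $\eta^-$ with $d_h(g^+,\eta^-)=0$ that contains anti-rays; its proof moreover produces a geodesic double ray $P$ from $\eta^-$ to $\eta$, whose anti-ray part $Q_P$ lies in $\eta^-$ and whose ray part $R_P$ lies in $\eta$. Since $(g^n(o))_{n\in\N}$ converges to $g^+$ and $d_h(g^+,\eta^-)=0$, the orbit ray $R_{g^+}\in g^+$ fellow-travels $Q_P$ (cf.\ Lemma~\ref{lem_hyperDi9.4}), running anti-parallel to it along the line of $P$; this gives one half of the desired equivalence $g^+\sim\eta^-$. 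Dually, $d_h(\eta,\mu)=0$ with $\eta$ containing rays and $\mu$ anti-rays provides, again via Lemma~\ref{lem_hyperDi9.4}, an anti-ray $S\in\mu$ a tail of which lies in a bounded out-ball of $R_P$, so $S$ runs anti-parallel to $R_P$ and points back towards the $\eta^-$-end of $P$. The key step is then to splice $S$ to the dynamics: its head sits near the junction of $P$, so Proposition~\ref{prop_nonEllipticQIOfN} applied at a vertex there produces a \qg\ ray towards $g^+$ that continues $S$ into a double ray $D'$ directed from $\mu$ to $g^+$, i.e.\ anti-parallel to $P$ and fellow-traveling it.

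With two fellow-traveling double rays of opposite orientation I would obtain directed paths in both directions along their common line, and hence, via Proposition~\ref{prop_hyperDi3.3}\,(ii), bounded directed distances from $Q_P$ to $R_{g^+}$ and from $R_P$ to $S$. These are exactly the reverse quasi-order relations, and together with the relations above they give $g^+=\eta^-$ and $\eta=\mu$; in particular $g^+$ then contains the anti-rays of $\eta^-$, which is the claim. I expect the main obstacle to be this splicing step and the bookkeeping forced by the non-symmetry of $d_h$: one must position the head of $S$ accurately enough relative to $o$ to guarantee a finite directed distance $d(s_0,g(s_0))$ so that Proposition~\ref{prop_nonEllipticQIOfN} applies and $D'$ is a genuine \qg\ double ray, and then upgrade the one-sided out-ball containments coming from the two $d_h=0$ hypotheses to the two-sided bounds needed for equivalence; here I would compare $D'$ with geodesics along $P$ using Lemma~\ref{lem_HyperDi_7.3} to control the fellow-traveling uniformly.
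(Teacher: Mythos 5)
Your reformulation of the goal and your reduction to the case that $g$ (after passing to a power) fixes $\eta$ are sound: the relation $d_h(\cdot,\cdot)=0$ is the specialization relation of the in-ball topology, so Proposition~\ref{prop_SeInduceInjection} does show it is preserved, Proposition~\ref{prop_bpOfDist0} makes the set $\{x\in\rand D\mid d_h(x,\mu)=0\}$ finite, and $(g^k)^+=g^+$ follows from Lemma~\ref{lem_directionWellDefined} together with Proposition~\ref{prop_hyperDi14.1} since both directions contain rays. The paper makes no such reduction (it runs the two cases in parallel throughout), but this part of your plan is legitimate, and your endgame --- two anti-parallel fellow-travelling directed lines with short cross-connections at both ends, trapping every vertex so as to reverse the one-sided relations --- is essentially the paper's own concluding argument (the paths $P_1$, $P_2$ placed at controlled distances from a vertex $x$ of $R'$).

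The genuine gap is the construction of your second line $D'$. To apply Proposition~\ref{prop_nonEllipticQIOfN} at a vertex $v$ of $S$ you need $d(v,g(v))<\infty$, and nothing provides it: the only directed information you have about $S$ is that its tail lies in $\cB^+_{6\delta}(R_P)$, i.e.\ is reachable \emph{from} $R_P$, which says nothing about the out-cone of any vertex of $S$; truncating $S$ only slides its head along the same anti-ray, and in a digraph the forward cone of that head need never meet $g(S)$ or any geodesic towards $g^+$. You flag this obstacle yourself but offer no mechanism to overcome it, and the cited tools do not supply one. Even granting the splice, the concatenation of a geodesic anti-ray with an orbit quasi-geodesic is not automatically a quasi-geodesic, and the decisive claim that the two lines admit short directed connections in \emph{both} directions is exactly the hard content and is asserted rather than proved --- note that Proposition~\ref{prop_hyperDi3.3}\,(ii) only converts a finite reverse distance into a bounded one, so the finiteness of the reverse directed distances must be established first. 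The paper avoids the splice entirely: it applies the powers $g^n$ to the whole anti-ray $Q\in\mu$ (images of geodesics are geodesics, since \se s here are distance-preserving), uses Proposition~\ref{prop_bpOfDist0} and pigeonhole to find one fixed double ray $R'$ that infinitely many images $g^i(Q)$ fellow-travel, and then exploits the constant out-degree to extract a \emph{limit} geodesic double ray $Q'$ from these images; this $Q'$ is precisely the anti-parallel directed line you wanted, obtained with no hypothesis of the form $d(v,g(v))<\infty$. Replacing your splicing step by this limit construction on the family $\{g^i(Q)\}$ would repair the argument; the remainder of your outline then tracks the paper's final trapping computation.
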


\begin{proof}
Let $\varphi\colon \R\to\R$ be a function such that $D$ satisfies (\ref{itm_Bounded1}) and (\ref{itm_Bounded2}) for this function.
Let $Q=\ldots x_{-1}x_0$ be a geodesic anti-ray in~$\mu$.
Since $g^+\neq \eta$, we find a geodesic double ray $R$ from some $\nu\in\rand D$ with $d_h(g^+,\nu)=0$ to~$\eta$ as in the proof of Lemma~\ref{lem_nonEllipticFixFinite1}.

Since $d_h(\eta,\mu)=0$, there exists $N\in\N$ with $d(R,x_{-i})\leq 6\delta$ for all $i\geq N$ by Lemma~\ref{lem_hyperDi9.4}.
By replacing $Q$ by a anti-subray, we may assume that $N=0$.
For every $n\in\N$, the image of~$Q$ under $g^n$ lies in the $6\delta$-out-ball of some geodesic double ray from a hyperbolic boundary point of distance $0$ from~$g^+$ either to~$\eta$, if $g$ fixes $\eta$, or to a hyperbolic boundary point of distance $0$ to~$\mu$, if $g$ fixes~$\mu$.
Since there are only finitely many such hyperbolic boundary points by Proposition~\ref{prop_bpOfDist0}, there exists one geodesic double ray $R'$ from a hyperbolic boundary point of distance~$0$ from~$g^+$ either to~$\eta$, if $\eta$ is fixed by~$g$, or to a hyperbolic boundary point of distance~$0$ to~$\mu$ such that $g^i(Q)$ lies in the $6\delta$-out-ball of $R'$ for all $i$ in some infinite subset $I$ of~$\N$.
Since the out-degrees are constant, we can use the construction that we used multiple times so far, to obtain a geodesic double ray $Q'$ that is defined by the images $g^i(Q)$ and goes either from~$\mu$, if $g$ fixes~$\mu$, or from a hyperbolic boundary point $\mu'$ with $d_h(\eta,\mu')=0$ to a hyperbolic boundary point $\nu'$ that has distance $0$ to the hyperbolic boundary point at which $R'$ starts.
In the case that $g$ fixes~$\mu$, we set $\mu':=\mu$.

Let $\eta'$ be the hyperbolic boundary point that contains the subrays of~$R'$.
So we have $\eta=\eta'$ if $g$ fixes~$\eta$.
Let $x$ be a vertex on~$R'$.
Since we have $d_h(\eta',\mu')=0$, we find a directed $R'$-$Q'$ path $P_1$ of length at most $6\delta$ and distance more than $7\delta\varphi(\delta+1)$ from~$x$.
Similarly, we find a directed $Q'$-$R'$ path $P_2$ of length at most $6\delta$ of distance more than $8\delta\varphi(\delta+1)$ to~$x$.
Let $x_i$ and $y_i$ be the starting and end vertices of $P_i$ for $i=1,2$, respectively.
Applying hyperbolicity and Proposition~\ref{prop_hyperDi3.3}, we conclude that all but at most the last $7\delta$ vertices of any $y_1$-$y_2$ geodesic lies within the $\delta$-out-ball of~$Q'$ and all but at most the first $7\delta\varphi(\delta+1)$ vertices and at most the last $8\delta\varphi(\delta+1)$ vertices of $y_2R'x_1$ lies in the $2\delta$-out-ball of~$Q'$.
Thus, $x$ lies within the $2\delta$-out-ball of~$Q'$ and hence $R'$ lies within the $2\delta$-out-ball of~$Q'$.
This implies $d_h(\mu',\eta')=0$ and hence $\eta'=\mu'$.
With a similar argument, we obtain $\nu'=\omega$, where $\omega$ is the hyperbolic boundary point that contains the anti-subrays of~$R'$.
Proposition~\ref{prop_hyperDi14.1} implies $g^+=\omega$.
In particular, $g^+$ contains anti-rays.
\end{proof}

\begin{lem}\label{lem_nonEllipticFixFinite2}
Let $D$ be a rooted hyperbolic digraph with constant out-degree and bounded in-degree.
If $g$ is a \se\ of~$D$ that is not elliptic, then there exist only finitely many hyperbolic boundary points of~$D$ that are fixed by~$g$ and that contain anti-rays.

More precisely, either the only hyperbolic boundary points $\eta$ fixed by~$g$ that contain anti-rays satisfy $d_h(g^+,\eta)=0$ or there is a unique hyperbolic boundary point $g^-$ with $d_h(g^+,g^-)\neq 0$ that contains anti-rays and is fixed by~$g$.
In the latter case, $g^+$ contains anti-rays and $g^-$ contains rays.
\end{lem}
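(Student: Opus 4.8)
The plan is to split the boundary points fixed by~$g$ that contain anti-rays according to their $d_h$-distance from the direction~$g^+$ and to derive everything from the two counting results already at hand, Lemmas~\ref{lem_nonEllipticFixFinite1} and~\ref{lem_2BPfixed}. Set $G^+:=\{\omega\in\rand D\mid d_h(g^+,\omega)=0\}$, which is finite by Proposition~\ref{prop_bpOfDist0}. Every fixed boundary point containing anti-rays at distance~$0$ from~$g^+$ lies in~$G^+$, so these are finite in number; thus the entire statement reduces to analysing the fixed boundary points~$\eta$ containing anti-rays with $d_h(g^+,\eta)\neq0$, for which I must show there is at most one, that such an~$\eta$ also contains rays, and that its presence forces $g^+$ to contain anti-rays.

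First I would prove that any such~$\eta$ contains rays. By Proposition~\ref{prop_forCorOf14.1} there is $\mu\in\rand D$ with $d_h(\mu,\eta)=0$ that contains rays. If $\mu\neq\eta$, then $\mu\neq g^+$ (since $d_h(g^+,\eta)\neq0=d_h(\mu,\eta)$), so the pair consisting of~$\mu$ (the point containing rays) and~$\eta$ (the point containing anti-rays, which is fixed by~$g$) meets every hypothesis of Lemma~\ref{lem_2BPfixed}; that lemma then yields $\mu=\eta$, a contradiction. Hence $\mu=\eta$, so $\eta$ contains rays. Applying Lemma~\ref{lem_2BPfixed} once more, now to the point~$\eta$ in both roles --- admissible because $\eta$ contains both rays and anti-rays, satisfies $d_h(\eta,\eta)=0$ and $\eta\neq g^+$, and is fixed --- gives that $g^+$ contains anti-rays.

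Next I would establish uniqueness. The key point is that once $g^+$ contains anti-rays, Corollary~\ref{cor_corOf14.1} forces $G^+=\{g^+\}$, since any $\omega\in G^+$ with $\omega\neq g^+$ would make $g^+$ contain only rays. Consequently $\{\omega\in\rand D\mid d_h(g^+,\omega)=0,\ \omega\text{ contains anti-rays}\}=\{g^+\}$ has size~$1$, and the bound in Lemma~\ref{lem_nonEllipticFixFinite1} shows that $g$ fixes at most~$2$ boundary points containing rays. One of them is~$g^+$, while every fixed boundary point containing anti-rays at nonzero distance from~$g^+$ also contains rays (by the first step) and differs from~$g^+$. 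Two distinct such points together with~$g^+$ would give three distinct fixed boundary points containing rays, contradicting the bound~$2$; hence there is at most one such point, the desired~$g^-$.

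Putting the pieces together yields the dichotomy: if no fixed boundary point containing anti-rays lies at nonzero distance from~$g^+$ we are in the first alternative, and otherwise the unique such point is~$g^-$, which contains rays while $g^+$ contains anti-rays; finiteness is then immediate since all remaining fixed boundary points containing anti-rays lie in the finite set~$G^+$. The step I expect to be the main obstacle is the reduction in the second paragraph: the whole argument hinges on being able to apply Lemma~\ref{lem_2BPfixed} --- verifying $\mu\neq g^+$ and that the degenerate application to the single point~$\eta$ is covered by its hypotheses --- because this is precisely what promotes an \emph{a priori} pure anti-ray point into one carrying rays as well, thereby collapsing the count of Lemma~\ref{lem_nonEllipticFixFinite1} to~$2$ and delivering uniqueness.
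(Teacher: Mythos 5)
Your proof is correct and follows essentially the same route as the paper: produce a ray-companion at $d_h$-distance $0$ from the fixed anti-ray point (your citation of Proposition~\ref{prop_forCorOf14.1}, which the paper re-derives in place via $o$-$x_i$ geodesics), apply Lemma~\ref{lem_2BPfixed} to show that point also contains rays and that $g^+$ contains anti-rays, and then deduce uniqueness from the bound in Lemma~\ref{lem_nonEllipticFixFinite1} after Corollary~\ref{cor_corOf14.1} collapses $\{\omega\in\rand D\mid d_h(g^+,\omega)=0\}$ to $\{g^+\}$ --- a counting step the paper leaves implicit but you spell out correctly. The only cosmetic difference is your case split with the degenerate second application of Lemma~\ref{lem_2BPfixed} (with $\eta$ in both roles): since $\mu\neq g^+$ follows from $d_h(g^+,\eta)\neq 0=d_h(\mu,\eta)$ whether or not $\mu=\eta$, a single unconditional application already yields both conclusions, which is how the paper proceeds.
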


\begin{proof}
We assume that there is a hyperbolic boundary point $g^-$ with $d_h(g^+,g^-)\neq 0$ that contains anti-rays and that is fixed by~$g$.
Let $Q=\ldots x_{-1}x_0$ be a geodesic anti-ray in~$g^-$.
Then there is a ray starting at~$o$ such that its first edge lies on infinitely many $o$-$x_i$ geodesics, its second edge on infinitely many of those geodesics that already shared their first edge and so on.
Let $\eta\in\rand D$ contain that ray.
Then by construction and as $d_h$ is visual, we have $d_h(\eta,g^-)=0$.
Lemma~\ref{lem_2BPfixed} implies $\eta=g^-$ and that $g^+$ contains anti-rays.
Thus, every element of~$\rand D$ that is fixed by $g$ and contains anti-rays, contains rays as well.
So Lemma~\ref{lem_nonEllipticFixFinite1} implies that $g^-$ and~$g^+$ are the only such hyperbolic boundary points.
\end{proof}

The following result is an immediate corollary of Lemmas~\ref{lem_nonEllipticFixFinite1} and~\ref{lem_nonEllipticFixFinite2}.

\begin{cor}\label{cor_nonEllipticFixFinite}
Let $D$ be a rooted hyperbolic digraph with constant out-degree and bounded in-degree.
If $g$ is a \se\ of~$D$ that is not elliptic but fixes a hyperbolic boundary point $\eta$ with $d(g^+,\eta)\neq 0$ that contains anti-rays, then $\eta$ and~$g^+$ are the only hyperbolic boundary points fixed by~$g$.\qed
\end{cor}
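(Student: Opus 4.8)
The plan is to combine the two preceding lemmas, with the only real work being a short observation about the hyperbolic boundary points lying at $d_h$-distance $0$ from $g^+$. First I would record the standing facts: $g$ always fixes its direction $g^+$, and every point of $\rand D$ contains a geodesic ray or a geodesic anti-ray, so it suffices to bound separately the fixed points containing rays and those containing anti-rays. Since $g$ fixes the point $\eta$, which contains anti-rays and satisfies $d_h(g^+,\eta)\neq 0$, the dichotomy in Lemma~\ref{lem_nonEllipticFixFinite2} must fall into its second alternative; hence $\eta=g^-$, the point $g^+$ contains anti-rays, and $g^-=\eta$ contains rays. Note that $g^+$ therefore contains \emph{both} rays and anti-rays, and that both $\eta$ and $g^+$ are fixed points containing rays.

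The key step is to show that $g^+$ is the only hyperbolic boundary point at $d_h$-distance $0$ from itself. Suppose $\omega\in\rand D$ with $d_h(g^+,\omega)=0$ and $\omega\neq g^+$. By Corollary~\ref{cor_corOf14.1}, one of $g^+,\omega$ then contains only rays and the other only anti-rays; but $g^+$ contains both rays and anti-rays, a contradiction. Thus $\{\omega\in\rand D\mid d_h(g^+,\omega)=0\}=\{g^+\}$, and in particular the set $\{\omega\in\rand D\mid d_h(g^+,\omega)=0,\ \omega\text{ contains anti-rays}\}$ has exactly one element.

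With this in hand the two lemmas close the argument. For the fixed points containing rays, the upper bound in Lemma~\ref{lem_nonEllipticFixFinite1} becomes $1+1=2$; since $\eta$ and $g^+$ are two distinct such points, they are \emph{all} of them. For the fixed points containing anti-rays, let $\omega$ be any such point. By the second alternative of Lemma~\ref{lem_nonEllipticFixFinite2} either $d_h(g^+,\omega)=0$, forcing $\omega=g^+$ by the key step, or $\omega$ coincides with the unique point $g^-=\eta$; in either case $\omega\in\{g^+,\eta\}$. Since every fixed boundary point contains rays or anti-rays, the set of all boundary points fixed by $g$ equals $\{g^+,\eta\}$, as claimed.

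I expect the main (and essentially only) obstacle to be the middle paragraph: one has to notice that being in the second case of Lemma~\ref{lem_nonEllipticFixFinite2} forces $g^+$ to contain both rays and anti-rays, which is exactly what Corollary~\ref{cor_corOf14.1} needs in order to collapse the distance-$0$ set of $g^+$ to a single point and thereby turn the two separate upper bounds into the desired exact description. Everything else is bookkeeping with the two lemmas.
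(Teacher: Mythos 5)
Your proposal is correct and follows essentially the same route as the paper, which states the corollary as an immediate consequence of Lemmas~\ref{lem_nonEllipticFixFinite1} and~\ref{lem_nonEllipticFixFinite2} without written proof; your write-up is exactly that combination spelled out. The one detail the paper leaves implicit and you rightly make explicit is the collapse of $\{\omega\in\rand D\mid d_h(g^+,\omega)=0\}$ to $\{g^+\}$ via Corollary~\ref{cor_corOf14.1}, using that in the second alternative of Lemma~\ref{lem_nonEllipticFixFinite2} the direction $g^+$ contains both rays and anti-rays.
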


Now Corollary~\ref{cor_nonEllipticFixFinite} together with Lemma~\ref{lem_2BPfixed} implies the following.

\begin{cor}\label{cor_nonEllipticFixFinite2}
Let $D$ be a rooted hyperbolic digraph with constant out-degree and bounded in-degree.
If $g$ is a \se\ of~$D$ that is not elliptic but fixes a hyperbolic boundary point $\eta\neq g^+$ such that some $\mu\in\rand D$ with $d_h(\eta,\mu)=0$ contains anti-rays, then there is no hyperbolic boundary point distinct from $\eta$ and~$g^+$ that is fixed by~$g$.

Furthermore, both fixed hyperbolic boundary points contain rays and anti-rays.\qed
\end{cor}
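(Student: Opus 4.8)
The plan is to combine the two results quoted just above: Lemma~\ref{lem_2BPfixed} will be used to show that each of $\eta$ and $g^+$ contains rays as well as anti-rays, and Corollary~\ref{cor_nonEllipticFixFinite} will then be used to conclude that $\eta$ and $g^+$ are the only fixed hyperbolic boundary points. The only genuine link that has to be supplied is that Corollary~\ref{cor_nonEllipticFixFinite} needs the hypothesis $d_h(g^+,\eta)\neq 0$, whereas the statement only provides $\eta\neq g^+$; Proposition~\ref{prop_hyperDi14.1} closes exactly this gap.

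First I would observe that $g^+$ contains rays by construction, being the hyperbolic boundary point carrying the geodesic ray built from the $o$-$g^n(o)$ geodesics. Next, since $\mu$ contains anti-rays and $d_h(\eta,\mu)=0$, Corollary~\ref{cor_corOf14.1} pins $\eta$ down as the ray side of this pair, so that $\eta$ contains rays. Thus $(\eta,\mu)$ meets all the hypotheses of Lemma~\ref{lem_2BPfixed}: $\eta$ contains rays, $\mu$ contains anti-rays, $d_h(\eta,\mu)=0$, $\eta\neq g^+$, and $\eta$ is fixed by~$g$. That lemma then yields simultaneously $\eta=\mu$ and that $g^+$ contains anti-rays. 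In particular $\eta$ contains both rays and anti-rays, and $g^+$ contains both rays (by construction) and anti-rays (just produced).

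Now I would invoke Proposition~\ref{prop_hyperDi14.1}: if we had $d_h(g^+,\eta)=0$ with $\eta\neq g^+$, then one of the two points would contain only rays and the other only anti-rays, which is impossible because $\eta$ contains both. Hence $d_h(g^+,\eta)\neq 0$. Since $\eta$ is fixed by~$g$, contains anti-rays, and satisfies $d_h(g^+,\eta)\neq 0$, Corollary~\ref{cor_nonEllipticFixFinite} applies and shows that $\eta$ and $g^+$ are the only hyperbolic boundary points fixed by~$g$. Together with the previous paragraph, in which both were shown to contain rays and anti-rays, this gives both assertions.

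The step I expect to require the most care is the passage from $\eta\neq g^+$ to $d_h(g^+,\eta)\neq 0$ in the third paragraph, together with the parallel point in the second paragraph that $\eta$ really does contain rays so that Lemma~\ref{lem_2BPfixed} is applicable. Both hinge on not conflating the symmetric relation $\eta\neq g^+$ with the asymmetric condition measured by the pseudo-\hm\ $d_h$, and on using Corollary~\ref{cor_corOf14.1} and Proposition~\ref{prop_hyperDi14.1} to translate between set-theoretic distinctness and $d_h$-distance. It is exactly the output of Lemma~\ref{lem_2BPfixed}, that $\eta$ contains rays \emph{and} anti-rays, that makes Proposition~\ref{prop_hyperDi14.1} strong enough to separate $\eta$ from~$g^+$.
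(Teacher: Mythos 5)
Your proof is correct and is exactly the paper's argument: the paper's entire proof of this corollary is the one-line remark that Corollary~\ref{cor_nonEllipticFixFinite} together with Lemma~\ref{lem_2BPfixed} implies it, and you supply precisely the implicit glue steps, namely Corollary~\ref{cor_corOf14.1} to see that $\eta$ contains rays (so that Lemma~\ref{lem_2BPfixed} is applicable) and Proposition~\ref{prop_hyperDi14.1} to upgrade $\eta\neq g^+$ to $d_h(g^+,\eta)\neq 0$ (so that Corollary~\ref{cor_nonEllipticFixFinite} is applicable). The only caveat, inherited from the statement itself rather than introduced by you, is that your appeal to Corollary~\ref{cor_corOf14.1} tacitly assumes $\mu\neq\eta$; this is the reading under which the paper states and applies the corollary, since for $\mu=\eta$ the hypothesis would only give anti-rays in~$\eta$.
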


Lemmas~\ref{lem_nonEllipticFixFinite1} and~\ref{lem_nonEllipticFixFinite2} are best possible in that we cannot ask for some constant~$K$ such that for every rooted hyperbolic digraph with constant out-degree and bounded in-degree and for every \se\ $g$ that is not elliptic there are at most~$K$ hyperbolic boundary points that are fixed by~$g$ as the following example shows.

\begin{ex}\label{ex_seMayFixArbitrarilyManyBP}
Let $n\in\N$.
Let $x_0x_1\ldots$ be a ray and for every $1\leq i\leq n$ let $\ldots y_{-1}^iy_0^iy_1^i\ldots$ be a double ray, i.\,e.\ we have edges $x_ix_{i+1}$ for all $i\in\N$ and $y_jy_{j+1}$ for all $j\in\Z$.
Let $D_1$ be the digraph obtained from the disjoint union of these ray and double rays with $x_jy_{-j}^i$ as additional edges for every $j\in\N$ and every $1\leq i\leq n$.
Let $D_2$ be a finite digraph with constant out-degree~$3$.
For every vertex $y_j^i$, we add a copy of $D_2$ to~$D_1$ and join $y_j^i$ to exactly two vertices of that copy.
The resulting digraph $D$ is hyperbolic and its hyperbolic boundary has $2n+1$ elements.
Furthermore, it has constant out-degree and bounded in-degree.
Let $g$ be the \se\ of~$D$ that maps every $x_j$ to $x_{j+1}$, every $y_j^i$ to $y_{j+1}^i$ and the copies of~$D_2$ accordingly.
This \se\ fixes each hyperbolic boundary point but no finite vertex set setwise.
\end{ex}

If $g$ is a \se\ of a rooted hyperbolic digraph $D$ with constant out-degree and bounded in-degree that is not elliptic, then we call it
\begin{enumerate}[$\bullet$]
\item \emph{hyperbolic} if it fixes a hyperbolic boundary point $g^-$ with $d_h(g^+,g^-)>0$ that contains anti-rays;
\item \emph{parabolic} otherwise.
\end{enumerate}

Note that if $g$ is hyperbolic, then Corollary~\ref{cor_nonEllipticFixFinite} implies that $g^+$ and $g^-$ are the only hyperbolic boundary points fixed by~$g$.
If $g$ is parabolic, then it may fix an arbitrary number of hyperbolic boundary all but one of which consist of only rays.

Let us now show that being parabolic or being hyperbolic is preserved by taking powers of the \se.

\begin{lem}\label{lem_PowerHyperPara}
Let $g$ be a \se\ of a rooted hyperbolic digraph $D$ with constant out-degree and bounded in-degree.
\begin{enumerate}[\rm (1)]
\item Then $g$ is hyperbolic if and only if $g^n$ is hyperbolic for all $n\geq 1$; and
\item $g$ is parabolic if and only if $g^n$ is parabolic for all $n\geq 1$.
\end{enumerate}
\end{lem}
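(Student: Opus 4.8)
The plan is to prove the sharper, per-exponent statement that for each fixed $n\geq 1$, $g$ is non-elliptic if and only if $g^n$ is, and $g$ is hyperbolic if and only if $g^n$ is; parts~(1) and~(2) then follow at once. Indeed, a non-elliptic \se\ is by definition either hyperbolic or parabolic and never both, so these per-$n$ equivalences give that $g$ and $g^n$ are simultaneously elliptic, simultaneously hyperbolic, or simultaneously parabolic; taking $n=1$ disposes of the trivial direction of each ``for all $n$'' assertion. The one ingredient used throughout is the identity $(g^n)^+=g^+$, established below.

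First I would record the non-ellipticity equivalence. If $g$ is elliptic then so is $g^n$, as already noted. Conversely, if $g^n$ fixes a finite vertex set $S$ setwise, then $S\cup g(S)\cup\cdots\cup g^{n-1}(S)$ is finite and fixed by $g$ setwise (using $g^n(S)=S$), so $g$ is elliptic; hence $g$ is non-elliptic iff $g^n$ is. Assuming now that $g$, equivalently $g^n$, is non-elliptic, the directions $g^+$ and $(g^n)^+$ are both defined. Since $(g^n)^i(o)=g^{ni}(o)$ is a subsequence of $(g^m(o))_{m\in\N}$, which converges to $g^+$, it too converges to $g^+$; Lemma~\ref{lem_directionWellDefined} applied to $g^n$ then gives $d_h((g^n)^+,g^+)=0$. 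As both $g^+$ and $(g^n)^+$ contain rays by construction, Proposition~\ref{prop_hyperDi14.1} forces $(g^n)^+=g^+$.

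For the hyperbolic equivalence the forward direction is immediate: if $g$ fixes $g^-$ with $d_h(g^+,g^-)>0$ and $g^-$ containing anti-rays, then $g^n$ fixes $g^-$ as well, and since $(g^n)^+=g^+$ we get $d_h((g^n)^+,g^-)>0$, so $g^n$ is hyperbolic. The converse is the crux. Suppose $g^n$ is hyperbolic, fixing $\xi:=(g^n)^-$ with $d_h(g^+,\xi)>0$ and $\xi$ containing anti-rays. By Corollary~\ref{cor_nonEllipticFixFinite} applied to $g^n$, the only hyperbolic boundary points fixed by $g^n$ are $\xi$ and $(g^n)^+=g^+$. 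The boundary map $\hat g$ commutes with the boundary map $\widehat{g^n}=\hat g^{\,n}$ of $g^n$, so $\hat g$ permutes this two-element fixed set; it fixes $g^+$ because $g$ fixes its own direction, and by injectivity of $\hat g$ on $\rand D$ (Proposition~\ref{prop_SeInduceInjection}) it cannot send $\xi$ to $g^+$, whence $\hat g(\xi)=\xi$. Thus $g$ fixes $\xi$, a point with $d_h(g^+,\xi)>0$ containing anti-rays, i.e.\ $g$ is hyperbolic.

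The main obstacle is this last converse implication: one must know that a hyperbolic $g^n$ has only the two boundary fixed points $g^+$ and $(g^n)^-$, which is exactly Corollary~\ref{cor_nonEllipticFixFinite}, and then rule out that $g$ swaps them --- guaranteed by $(g^n)^+=g^+$ together with injectivity of the boundary map. Everything else reduces to routine bookkeeping once the identity $(g^n)^+=g^+$ is in hand.
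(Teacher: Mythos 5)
Your proof is correct and follows essentially the same route as the paper: both hinge on Corollary~\ref{cor_nonEllipticFixFinite} applied to $g^n$ (a hyperbolic $g^n$ fixes exactly the two boundary points $(g^n)^+=g^+$ and $(g^n)^-$) together with the observation that $g$, commuting with $g^n$, permutes this two-element fixed set and must therefore fix $(g^n)^-$ --- the paper phrases this contrapositively, producing the third fixed point $g((g^n)^-)$ of $g^n$ as a contradiction when $g$ is parabolic, where you argue directly via injectivity of $\hat g$ on $\rand D$. Your explicit verification of $(g^n)^+=g^+$ through Lemma~\ref{lem_directionWellDefined} and Proposition~\ref{prop_hyperDi14.1} is a nice touch, making precise an identity the paper uses only tacitly.
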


\begin{proof}
If $g$ is hyperbolic, then $g^n$ fixes $g^+$ and $g^-$, too, and it is obviously not elliptic.
Since $g^-$ contains anti-rays, $g^n$ must be hyperbolic.

If $g$ is parabolic, no $g^n$ can be elliptic.
Suppose that for some $n\geq 2$, the \se\ $g^n$ is not parabolic.
Then it is hyperbolic and hence fixes a unique hyperbolic boundary $(g^n)^-$ that contains anti-rays and that is distinct from~$g^+$.
By Corollary~\ref{cor_nonEllipticFixFinite}, the \se\ $g$ cannot fix $(g^n)^-$ and thus $g((g^n)^-)$ must be distinct from $(g^n)^-$ and~$g^+$.
Obviously, $g^n$ must also fix $g((g^n)^-)$, a contradiction to $g$ being parabolic.

Trivially, \se s $g$ are elliptic if and only if $g^n$ is elliptic for every $n\in\N$.
Thus, if $g^n$ is hyperbolic, $g$ is neither parabolic nor elliptic and hence it is hyperbolic.
Similarly, if $g^n$ is parabolic, $g$ is neither hyperbolic nor elliptic, so it is parabolic.
\end{proof}

A priori, a hyperbolic boundary point that is fixed by~$g^n$ for a non-elliptic \se\ $g$ need not be fixed by~$g$.
However, the following result shows a situation where we can conclude that $g$ fixes that hyperbolic boundary point.

\begin{lem}\label{lem_PowerFixedDirectionImpliesHyper}
Let $D$ be a rooted hyperbolic digraph with constant out-degree and bounded in-degree.
Let $g$ be \se s of~$D$ that is not elliptic.
If $g^n$ fixes $\mu\in\rand D$ for some $n\in\N$ and if there is a hyperbolic boundary point $\eta$ with $d_h(\mu,\eta)=0$ that contains anti-rays, then $g$ fixes $h^+$.
\end{lem}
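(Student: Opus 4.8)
The plan is to show that $g$ fixes $\mu$ by exploiting that $g$ permutes the set of hyperbolic boundary points fixed by $g^n$, and that Corollary~\ref{cor_nonEllipticFixFinite2} forces this set to be very small. I assume throughout that $n\geq 1$.

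First I would record two facts about $g^n$. Since $g$ is not elliptic, neither is $g^n$: if some power $g^n$ were elliptic, all of its orbits would be finite, whence every $g$-orbit, being a finite union of $g^n$-orbits, would be finite and $g$ would be elliptic too. Thus $g^n$ has a well-defined direction $(g^n)^+$. Because $(g^n)^k(o)=g^{nk}(o)$ is a subsequence of $(g^m(o))_{m\in\N}$, which converges to $g^+$, Lemma~\ref{lem_directionWellDefined} applied to $g^n$ gives $d_h((g^n)^+,g^+)=0$; as both directions contain rays, Proposition~\ref{prop_hyperDi14.1} forces $(g^n)^+=g^+$. In particular $g$ fixes $(g^n)^+$, since $g$ fixes its own direction $g^+$.

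If $\mu=g^+$ then $g$ fixes $\mu$ and there is nothing to prove, so I assume $\mu\neq g^+=(g^n)^+$. Then $g^n$ is a non-elliptic \se\ fixing the hyperbolic boundary point $\mu\neq(g^n)^+$, and there is $\eta\in\rand D$ with $d_h(\mu,\eta)=0$ that contains anti-rays. This is exactly the hypothesis of Corollary~\ref{cor_nonEllipticFixFinite2} for $g^n$ (with the corollary's two boundary points played by $\mu$ and $\eta$), so I conclude that $\mu$ and $(g^n)^+=g^+$ are the only hyperbolic boundary points fixed by $g^n$. To finish, I observe that $g$ maps the set of boundary points fixed by $g^n$ into itself: if $g^n(\xi)=\xi$ then $g^n(g(\xi))=g(g^n(\xi))=g(\xi)$. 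Applied to $\mu$, the point $g(\mu)$ is fixed by $g^n$, hence $g(\mu)\in\{\mu,g^+\}$. Since $g$ fixes $g^+$ and $\hat{g}$ is injective on $\rand D$ by Proposition~\ref{prop_SeInduceInjection}, the equality $g(\mu)=g^+=g(g^+)$ would force $\mu=g^+$, contradicting $\mu\neq g^+$. Therefore $g(\mu)=\mu$, that is, $g$ fixes $\mu$.

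I expect the only genuinely delicate point to be the identification $(g^n)^+=g^+$, where I must apply the subsequence argument and the ray/anti-ray dichotomy of Proposition~\ref{prop_hyperDi14.1} correctly; once $g$ is known to fix $(g^n)^+$, the argument reduces to the elementary permutation-and-injectivity step, and all of the substantive geometric content is outsourced to Corollary~\ref{cor_nonEllipticFixFinite2}. (If one also wants the hyperbolicity asserted by the lemma's name, note that in the case $\mu\neq g^+$ the ``furthermore'' part of Corollary~\ref{cor_nonEllipticFixFinite2} shows $\mu$ contains anti-rays, and Proposition~\ref{prop_hyperDi14.1} then excludes $d_h(g^+,\mu)=0$, so $\mu$ serves as $g^-$.)
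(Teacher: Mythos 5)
Your proof is correct, and it takes a somewhat different route than the paper. The paper's own argument, after the same reduction to $\mu\neq g^+$, invokes Proposition~\ref{prop_bpOfDist0} to get finitely many boundary points at $d_h$-distance $0$ from~$\mu$, passes to a further power $g^{nm}$ fixing one of them, concludes via Corollary~\ref{cor_nonEllipticFixFinite} that $g^{nm}$ is hyperbolic, transfers hyperbolicity down to $g$ and $g^n$ by Lemma~\ref{lem_PowerHyperPara}, and finishes by noting that a hyperbolic \se\ and its powers fix the same two boundary points. You instead feed the lemma's hypotheses directly into Corollary~\ref{cor_nonEllipticFixFinite2} applied to $g^n$ -- they match verbatim once $(g^n)^+=g^+$ is known -- so that the fixed set of $g^n$ on $\rand D$ is exactly $\{\mu,g^+\}$, and then close with the elementary observation that $g$ maps this set into itself and that injectivity of $\hat{g}$ on $\rand D$ (Proposition~\ref{prop_SeInduceInjection}) rules out $g(\mu)=g^+$. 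Your route avoids Proposition~\ref{prop_bpOfDist0}, Lemma~\ref{lem_PowerHyperPara} and the passage to $g^{nm}$ altogether, and it has the virtue of making explicit two steps the paper leaves tacit: the identification $(g^n)^+=g^+$ (your subsequence argument via Lemma~\ref{lem_directionWellDefined} plus the ray/anti-ray dichotomy of Proposition~\ref{prop_hyperDi14.1} is exactly right, as is your remark that $g^n$ is non-elliptic), and the fact that $g$ permutes the fixed set of~$g^n$. What the paper's route buys in exchange is that hyperbolicity of $g$ falls out en passant (justifying the lemma's name), which you recover separately in your closing parenthetical via the ``furthermore'' clause of Corollary~\ref{cor_nonEllipticFixFinite2}. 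You also correctly read the garbled conclusion ``$g$ fixes $h^+$'' as ``$g$ fixes $\mu$'', which is what the paper's proof establishes; the only caveat worth recording is that, like the paper, the statement should require $n\geq 1$, as you note.
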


\begin{proof}
We may assume that $g^+\neq \mu$ since otherwise the assertion holds trivially.
There are only finitely many hyperbolic boundary points $\eta$ with $d_h(\mu,\eta)=0$ according to Proposition~\ref{prop_bpOfDist0}.
So there is some $m\in\N$ such that $g^{nm}$ fixes one of them.
Thus, Corollary~\ref{cor_nonEllipticFixFinite} shows that $g^{nm}$ is hyperbolic.
Lemma~\ref{lem_PowerHyperPara} implies that $g$ and $g^n$ are hyperbolic, too.
Since $g$ and $g^n$ must fix the same hyperbolic boundary points we conclude that $g$ fixes~$\mu$.
\end{proof}

The set of all self-embeddings of~$D$ forms a monoid.
In the rest of this section, we focus on results for submonoids of that.
Let $M$ be a monoid of \se s on~$D$.
We denote by~$\cD(M)$ the set of directions of all non-elliptic elements of~$M$.
The set
\begin{align*}
\cL(M):= \{&\eta\in\rand D\mid \exists \text{ sequence }(g_i)_{i\in\N}\text{ in } M\colon (g_i(o))_{i\in\N}\text{ converges to }\eta,\\&\eta\text{ contains rays}\}
\end{align*}
is the \emph{limit set} of~$M$, its elements are the \emph{limit points}.
Obviously, $\cD(M)$ is a subset of $\cL(M)$.

Contrary to the case of automorphims on graphs, in our situation $\cD(M)$ is not dense in $\cL(M)$ as the following example shows.

\begin{ex}\label{ex_DirectionsNotDenseInLimit}
Let $I\sub\N$ with $0\notin I$ be such that for all $m\geq 0$ and all $n>0$, we have
\[
\{n+i\mid i\in I, i\geq m\}\neq\{i\in I\mid i\geq m+n\}.
\]
Let $R=x_0x_1\ldots$ be a ray.
To each $x_i$ with $i\in I$ we attach a new ray $x_0^ix_1^i\ldots$ such that $x_0^i=x_i$.
We continue to add new rays to all $x_j^i$ for all $i,j\in I$ and repeat this so that the digraph $D$ constructed has the property that for each $i\in I$ there is a \se\ $g_i$ such that $x_0^ix_1^i\ldots$ is the image of~$R$ under~$g$.
Let $M$ be the set of all \se s of~$D$.
Then the hyperbolic boundary point $\eta$ that contains~$R$ is not the direction of any non-elliptic \se\ by the choice of~$I$, but it lies in $\cL(M)$ since the sequence $(g_i(x_0))_{i\in\N}=(x_i)_{i\in\N}$ converges to it.
Also, no sequence of vertices and hyperbolic boundary points that lie outside of $\{x_i\mid i\in\N\}\cup\{\eta\}$ converges to~$\eta$.
In particular, $\eta$ is not in the closure of $\cD(M)$.
This shows that $\cD(M)$ is not dense in $\cL(M)$ in this situation.
\end{ex}

Despite this negative statement in Example~\ref{ex_DirectionsNotDenseInLimit}, it still feels that the directions of~$M$ come closer to~$\eta$.
We will make this intuition precise in Proposition~\ref{prop_DirectionsAlmostDenseInLimit}.
But before we can prove that, we need some lemmas.

\begin{lem}\label{lem_ellipticFixBalls}
Let $D$ be a rooted hyperbolic digraph with constant out-degree and bounded in-degree.
Let $g$ be an elliptic \se\ of~$D$ and let $n\in\N$.
Then there exists $i\in\N$ such that $g^i$ fixes $\cB^+_n(o)\cup \cB^-_n(o)$ pointwise.
\end{lem}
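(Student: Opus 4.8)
The plan is to exploit that $g$ is elliptic, so all its orbits are finite, and to combine this with the fact that $g$ is distance-preserving (being a $(1,0)$-\qi\ embedding) so that it maps balls around the root to balls around $g(o)$. First I would fix $n$ and consider the finite vertex set $W:=\cB^+_n(o)\cup\cB^-_n(o)$, which is finite because $D$ has bounded in- and out-degree. The key observation is that, since $g$ is elliptic, the $g$-orbit of each vertex is finite; in particular the orbit of $o$ is finite, say of size bounded by some $N$, and more generally the union of the orbits of the finitely many vertices in $W$ is a finite set $W'$ that is invariant under $g$.

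Next I would argue that $g$ restricts to a permutation of this finite invariant set $W'$. Indeed, $g$ is injective and maps $W'$ into itself (each element of $W'$ lies in a finite $g$-orbit that is entirely contained in $W'$), so $g|_{W'}$ is an injection of a finite set into itself, hence a bijection. A permutation of a finite set has finite order, so there exists $i\geq 1$ with $g^i|_{W'}=\mathrm{id}_{W'}$; concretely one may take $i$ to be the least common multiple of the lengths of the orbits, or simply $i=|W'|!$. Since $W\subseteq W'$, this $g^i$ fixes every vertex of $W=\cB^+_n(o)\cup\cB^-_n(o)$ pointwise, which is exactly the claim.

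The one point requiring a little care is showing that $W'$, the union of orbits of vertices in $W$, really is finite and $g$-invariant. Finiteness follows because $W$ is finite and each orbit is finite (ellipticity gives finite orbits, as noted after the definition of elliptic). Invariance is immediate: $g$ maps the orbit of any vertex back into that same orbit. I expect this bookkeeping to be the only mildly delicate step, but it is routine once one records that ellipticity forces every orbit to be finite. Everything else is the standard fact that an injective self-map of a finite set is a bijection and hence of finite order, so no hyperbolicity or geometric estimates are needed beyond the structural properties already established for \se s.
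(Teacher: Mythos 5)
Your argument is correct, but it takes a genuinely different route from the paper's. You pass from the finite set $W=\cB^+_n(o)\cup\cB^-_n(o)$ to the union $W'$ of the $g$-orbits of its elements and then apply, once, the fact that an injective self-map of a finite invariant set is a permutation and hence of finite order; that is a clean one-step argument. Its only substantive dependency is the claim that every orbit of an elliptic \se\ is finite, which the paper asserts without proof right after the definition of elliptic; since ``elliptic'' only says that \emph{some} finite set $U$ is fixed setwise, this claim is not a tautology, and to be self-contained you should record the short verification: fixing any $u\in U$ (rootedness gives $d(o,u)<\infty$), distance-preservation of~$g$ (Proposition~\ref{prop_SeInduceInjection}) and $g(U)=U$ give $g^k(o)\in\cB^-_{d(o,u)}\bigl(g^k(u)\bigr)\subseteq\bigcup_{u'\in U}\cB^-_{d(o,u)}(u')$, a finite set by bounded in-degree, so the orbit of~$o$ is finite; then $g^k(x)\in\cB^+_{d(o,x)}\bigl(g^k(o)\bigr)$ confines the orbit of any~$x$ to a finite set by constant out-degree. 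The paper avoids the orbit remark entirely: from the setwise-fixed set~$U$ it builds the finite invariant set $V=\{x\mid d(x,u)=d(o,U)\text{ for some }u\in U\}$ containing~$o$, extracts a power of~$g$ fixing~$o$, and then inducts on the radius, using at each step that a power fixing $\cB^+_k(o)\cup\cB^-_k(o)$ pointwise permutes the balls of radius $k+1$. Your version buys brevity and a single application of the permutation argument; the paper's version buys self-containedness from the bare definition of ellipticity (its induction is essentially the proof one would give of the finite-orbit remark anyway). Both proofs ultimately rest on the same structural inputs: injectivity of \se s and finiteness of in- and out-balls.
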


\begin{proof}
Let $U$ be a finite vertex set that is fixed setwise by~$g$ and let $V$ be the set of all vertices $x$ of~$D$ with $d(x,u)=d(o,U)$ for some $u\in U$.
Note that $V$ is finite, contains~$o$ and is fixed by~$g$ setwise.
Then $g$ induces a permutation on~$V$ since it is a \se.
So there exists $i_0$ such that $g^{i_0}$ fixes~$o$.
Then $g^{i_0}$ induces permutations on $\cB_1^+(o)$ and on $\cB^-_1(o)$.
So there exists $i_1$, a multiple of $i_0$, such that $g^{i_1}$ fixes $\cB_1^+(o)$ and $\cB^-_1(o)$ pointwise.
Recursively, we obtain $i_n$ such that $g^{i_n}$ fixes $\cB_n^+(o)$ and $\cB^-_n(o)$ pointwise.
\end{proof}

As a corollary, we directly obtain the following.

\begin{cor}\label{cor_ellipticIsAuto}
Let $D$ be a rooted hyperbolic digraph with constant out-degree and bounded in-degree.
Every elliptic \se\ of~$D$ is an automorphism.\qed
\end{cor}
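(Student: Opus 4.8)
The plan is to observe that the only property separating a \se\ from an automorphism is surjectivity, and that Lemma~\ref{lem_ellipticFixBalls} delivers this at once. A \se\ $g$ is by definition an injective map $V(D)\to V(D)$ that sends edges to edges and non-edges to non-edges; once I establish that $g$ is also surjective, bijectivity is immediate, and the fact that $g$ maps non-edges to non-edges will force its inverse to preserve adjacency as well, so $g$ will be an automorphism.

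First I would fix an arbitrary vertex $v\in V(D)$ and set $n:=d(o,v)$, which is finite since $o$ is a root of~$D$; thus $v\in\cB^+_n(o)$. Applying Lemma~\ref{lem_ellipticFixBalls} with this $n$ yields some $i\in\N$ such that $g^i$ fixes $\cB^+_n(o)\cup\cB^-_n(o)$ pointwise; replacing $i$ by $2i$ if necessary, I may assume $i\geq 1$, since a power of~$g$ that fixes the out-ball pointwise fixes it again. In particular $g^i(v)=v$, so that $v=g\big(g^{i-1}(v)\big)$ lies in the image of~$g$. As $v$ was arbitrary, $g$ is surjective.

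Combining surjectivity with the injectivity built into the definition of a \se, the map $g$ is a bijection of $V(D)$. To finish I would verify that $g^{-1}$ preserves adjacency: if $(x,y)$ is an edge but $\big(g^{-1}(x),g^{-1}(y)\big)$ were a non-edge, then $g$ would map this non-edge to the edge $(x,y)$, contradicting that \se s send non-edges to non-edges; the symmetric argument handles non-edges. Hence $g^{-1}$ is also a \se\ and $g$ is an automorphism. I do not expect any genuine obstacle here, as the corollary is a direct translation of the pointwise-fixing statement of Lemma~\ref{lem_ellipticFixBalls}; the only point worth flagging is ensuring the exponent~$i$ can be taken at least~$1$, so that $v$ is genuinely exhibited as an image under~$g$.
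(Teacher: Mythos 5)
Your proof is correct and is exactly the intended derivation: the paper offers no separate argument for this corollary, stating it as an immediate consequence of Lemma~\ref{lem_ellipticFixBalls}, and your route -- every vertex lies in $\cB^+_n(o)$ for $n=d(o,v)<\infty$ by the root property, is fixed by a positive power of~$g$, hence lies in the image of~$g$, giving surjectivity, after which injectivity and the preservation of non-edges force $g^{-1}$ to preserve adjacency -- is precisely that consequence spelled out. The one blemish is your patch ``replace $i$ by $2i$'', which does nothing if $i=0$; the correct observation is that Lemma~\ref{lem_ellipticFixBalls} genuinely yields $i\geq 1$ (its proof takes $i_0$ to be a positive power for which the permutation induced by~$g$ on a finite setwise-fixed set fixes~$o$, and all subsequent exponents are positive multiples of~$i_0$), or alternatively that injectivity of~$g$ on the finite orbit of~$o$ forces $g^{k}(o)=o$ for some $k\geq 1$, so the assumption $i\geq 1$ is harmless.
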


Let us now establish a property that guarantees that a \se\ is not elliptic.

\begin{lem}\label{lem_translationIsHyper}
Let $D$ be a rooted hyperbolic digraph with constant out-degree and bounded in-degree.
Let $U$ be a subset of $V(D)$ with $o\notin U$ and let $g$ be a \se\ of~$D$.
If $g(U\cup\{o\})\sub U$, then $g$ is not elliptic and $g^+$ lies in the closure of~$U$.
\end{lem}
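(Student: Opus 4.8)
The plan is to deduce both conclusions from the single observation that the forward orbit of the root stays inside $U$. First I would note that $g(U\cup\{o\})\sub U$ splits into $g(o)\in U$ and $g(U)\sub U$, so a trivial induction gives $g^n(o)\in U$ for every $n\geq 1$; since $o\notin U$, this forces $g^n(o)\neq o$ for all $n\geq 1$.

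To see that $g$ is not elliptic, I would argue by contradiction. If $g$ were elliptic, then Lemma~\ref{lem_ellipticFixBalls} applied with $n=0$ (so that $\cB^+_0(o)\cup\cB^-_0(o)=\{o\}$) produces a positive integer $i$ with $g^i(o)=o$; that $i$ may be taken positive is clear from the proof of that lemma, where $g$ induces a finite-order permutation on a finite set containing~$o$. This contradicts $g^i(o)\in U$ together with $o\notin U$. Hence $g$ is not elliptic and its direction $g^+$ is well defined.

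For the boundary statement I would invoke the machinery around the definition of $g^+$. Since $o$ is a root we have $d(o,g(o))<\infty$, so Proposition~\ref{prop_nonEllipticQIOfN} applies with $v=o$: the sequence $(g^n(o))_{n\in\N}$ gives rise to the \qg\ ray $Q=\bigcup_{n}g^n(P')$, and as recorded in the remark following that proposition, $Q$ converges to~$g^+$. Because the junction vertices $g^n(o)$ march off to infinity along~$Q$ (the map $i\mapsto g^i(o)$ being a \qi\ embedding of~$\N$), the subsequence $(g^n(o))_{n\geq 1}$ itself converges to~$g^+$ in the topology induced by~$d_h$. As every $g^n(o)$ lies in~$U$, each open neighbourhood of~$g^+$ meets~$U$, and therefore $g^+\in\overline{U}$.

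The argument is short because the earlier results do the real work; the only delicate points are checking that the exponent in Lemma~\ref{lem_ellipticFixBalls} can be chosen positive and that convergence of~$Q$ passes to the subsequence $(g^n(o))$. Both follow immediately from the cited statements, so I do not expect a genuine obstacle here.
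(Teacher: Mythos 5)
Your proposal is correct and follows essentially the same route as the paper: the forward orbit $g^n(o)$, $n\geq 1$, lies in~$U$, ellipticity would force some positive power of~$g$ to return~$o$ to itself (the paper gets this from finiteness of the orbit plus injectivity of~$g$, which is exactly the content of your appeal to Lemma~\ref{lem_ellipticFixBalls} with $n=0$, where positivity of the exponent is indeed available from that lemma's proof), contradicting $o\notin U$; and $g^+\in\overline{U}$ because $(g^n(o))_{n\geq 1}$ converges to~$g^+$, which the paper asserts tersely and you correctly justify via Proposition~\ref{prop_nonEllipticQIOfN} and the remark that the associated quasi-geodesic converges to~$g^+$.
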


\begin{proof}
Let us suppose that $g$ is elliptic.
Then $\{g^i(o)\mid i\in\N\}$ is finite by Lemma~\ref{lem_ellipticFixBalls}.
Since $g$ is a \se, it is injective and hence there must be $i\in\N$ with $g^{i+1}(o)=o$.
But we get recursively that $g^j(o)\in U$ for all $j\in\N$ with  $j\geq 1$.
This contradiction shows that $g$ is not elliptic.

Since all $g^i(o)$ for $i\geq 1$ lie in~$U$, the direction $g^+$ of~$g$ must lie in the closure of~$U$.
\end{proof}

We will now prove that we can `push' hyperbolic boundary points via non-elliptic \se s towards their directions.
This can be seen as an analogue of what is called projectivity for automorphisms of metric spaces, see e.\,g.\ \cite{W-FixedSets}.

\begin{lem}\label{lem_projectivity}
Let $D$ be a rooted hyperbolic digraph with constant out-degree and bounded in-degree.
Let $g$ be a \se\ of~$D$ that is not elliptic.
Let $\eta\in\rand D$ such that $\eta$ is not fixed by any $g^i$ with $i\in\N$.
Then there exist neighbourhoods $U^\circ$ and $V^\circ$ of~$g^+$ and~$\eta$, respectively, such that for all neighbourhoods $U$ and $V$ of $g^+$ and~$\eta$, respectively, with $U\sub U^\circ$ and $V\sub V^\circ$ and finite vertex sets $S_U\sub U$ and $S_V\sub V$, such that $S_U$ meets all $o$-$u$ geodesics for $u\in U$ and $S_V$ meets all $o$-$v$ geodesics for $v\in V$, and there exists $n\in\N$ such that $g^n(V^+)\sub U^+$, where $U^+$ and $V^+$ are the sets of vertices $x$ such that some $o$-$x$ geodesic meets $\cB^+_{\kappa}(S_U)$ and $\cB^+_\kappa(S_V)$, respectively, for
\[
\kappa=(12\delta+4\delta \varphi(\delta+1)+1)\varphi(\delta+1),
\]
where $\varphi\colon\R\to\R$ is a function such that $D$ satisfies (\ref{itm_Bounded1}) and (\ref{itm_Bounded2}) with respect to~$\varphi$.
\end{lem}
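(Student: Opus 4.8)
The plan is to take $U^\circ$ and $V^\circ$ to be two small, well-separated neighbourhoods of $g^+$ and $\eta$, and then to exploit that high powers of $g$ drag the bounded cross-section $S_V$ arbitrarily far out towards $g^+$, so that its shadow falls inside the shadow of $S_U$. First I would record that $g^+$ contains rays (it is built from the geodesics $o$-$g^n(o)$) and that $g^+\neq\eta$, since $g$ fixes $g^+$ but no power of $g$ fixes $\eta$. As $g^+$ contains rays and, in the intended applications, so does~$\eta$, Lemma~\ref{lem_disjointBNeighbourhoods} supplies disjoint open neighbourhoods $U^\circ\ni g^+$ and $V^\circ\ni\eta$, which I would shrink so that in addition $g^+\notin\closure{V^\circ}$.

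Now fix $U\sub U^\circ$, $V\sub V^\circ$ and cross-sections $S_U,S_V$ as in the statement. Since $(g^n(o))_{n\in\N}$ converges to $g^+\in U$, for all large $n$ we have $g^n(o)\in U$, so every $o$-$g^n(o)$ geodesic $P$ passes through some $s\in S_U$. The core is a uniform claim: for all sufficiently large $n$, depending only on $U,V,S_U,S_V$ and not on the individual $x$, every $x\in V^+$ satisfies $g^n(x)\in U^+$. Given $x\in V^+$, I would pick an $o$-$x$ geodesic $P_x$ through a vertex $p\in\cB^+_\kappa(S_V)$; then $d(o,p)\leq\max_{s\in S_V}d(o,s)+\kappa$ is bounded independently of $x$. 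As $g$ is a $(1,0)$-\qi\ embedding, $Q:=g^n(P_x)$ is a $g^n(o)$-$g^n(x)$ geodesic through $g^n(p)$, and $d(g^n(o),g^n(p))=d(o,p)$ stays bounded; hence $g^n(p)$ is pulled towards $g^+$ alongside $g^n(o)$ and, for large $n$, lies far out beyond $S_U$.

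The geometric heart is then a thin-triangle estimate on the geodesic triangle with end vertices $o$, $g^n(o)$, $g^n(x)$ and sides $P$, $Q$ and an $o$-$g^n(x)$ geodesic $T$. By Lemma~\ref{lem_hyperDi3.4}, $T$ lies within out- and in-distance $6\delta+2\delta\varphi(\delta+1)$ of $P\cup Q$. For large $n$ every vertex of $Q$ is far from $o$, so the witnesses for the vertices of $T$ near $o$ must lie on $P$ rather than on $Q$; in particular, near the bounded-distance vertex $s\in S_U$ of $P$ one locates a vertex $t$ of $T$ together with a common vertex $c$ joined to each of $s$ and $t$ by a directed path of length at most $6\delta+2\delta\varphi(\delta+1)$. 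Proposition~\ref{prop_hyperDi3.3}\,(i) then bounds the $s$-$t$ distance by $(2(6\delta+2\delta\varphi(\delta+1))+1)\varphi(\delta+1)=\kappa$, so that $t\in\cB^+_\kappa(S_U)$, the geodesic $T$ meets $\cB^+_\kappa(S_U)$, and $g^n(x)\in U^+$ as required.

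The step I expect to be the main obstacle is the uniformity of $n$ over all $x\in V^+$ together with the asymmetry of the directed distance. Concretely, forcing the $P$-vertex $s$ to connect to $T$ rather than to $Q$ rests on $Q=g^n(P_x)$ staying uniformly far from $o$ for every $x\in V^+$ at once; this is where the separation $g^+\notin\closure{V^\circ}$ and the observation that every vertex of the $p$-to-$x$ part of $P_x$ again lies in $V^+$ (so that its $g^n$-image is itself dragged towards $g^+$) must be combined, and where one has to pass between out- and in-distances via Proposition~\ref{prop_hyperDi3.3}\,(ii) to convert the available bounds into the ones the triangle estimate consumes.
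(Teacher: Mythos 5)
Your outline reproduces the paper's ``good'' case but has no working mechanism for the ``bad'' one, and the gap sits exactly where you flag it. The decisive symptom is that you use the hypothesis that no power of~$g$ fixes~$\eta$ only to conclude $g^+\neq\eta$, and then choose $V^\circ$ merely disjoint from $U^\circ$ with $g^+\notin\closure{V^\circ}$. That weaker hypothesis does not suffice: if some $g^i$ fixed $\eta\neq g^+$, then vertices $x$ far out on a geodesic ray from~$o$ in~$\eta$ lie in~$V^+$, while $g^{in}(x)$ stays on a ray in~$\eta$, so $o$-$g^{in}(x)$ geodesics eventually track into the shadow of~$\eta$ and miss $\cB^+_\kappa(S_U)$ --- the conclusion $g^n(V^+)\sub U^+$ fails. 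So a correct proof must engage the entire fixed-point structure of~$g$, not just the single point~$g^+$. This is what the paper's proof does: it introduces the set $\cR_g$ of geodesic double rays from boundary points fixed by~$g$ to~$g^+$ (together with $o$-$g^+$ rays), uses the finiteness of the fixed set (Lemmas~\ref{lem_nonEllipticFixFinite1} and~\ref{lem_nonEllipticFixFinite2}) and the non-fixedness of~$\eta$ to impose the quantitative separation $d(\cR_g,S_V\cup V)>16\delta+\lambda$, and adds an auxiliary basepoint $o_V\in\bigcup\cR_g$ with explicit inequalities calibrated to~$n$. None of this has an analogue in your argument.

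Concretely, your core uniformity claim --- that for large~$n$ every vertex of $Q=g^n(P_x)$ is far from~$o$, uniformly over $x\in V^+$ --- is circular as justified: ``its $g^n$-image is itself dragged towards $g^+$'' applied to the vertices of $pP_xx$ (which lie in $V^+$) is precisely the statement $g^n(V^+)\sub U^+$ being proved, and in a semimetric there is no triangle-inequality lower bound forcing $d(o,g^n(y))$ to be large. The paper never claims this; instead, at a vertex $a\in S_U$ on the $o$-side of the triangle it splits by thinness into $a$ close \emph{to} the $o$-$x$ side (your good case, where your invocation of Lemma~\ref{lem_hyperDi3.4} and Proposition~\ref{prop_hyperDi3.3}\,(i) with the bound $(2\lambda+1)\varphi(\delta+1)=\kappa$ does match the paper's computation, modulo the directedness bookkeeping the paper handles via an auxiliary vertex~$d$) or $a$ close to the side running into~$x$, and it excludes the latter by showing it would place a vertex of~$\cR_g$ within $2\delta$ of a $g^n(S_V^+)$-$g^n(V^+)$ geodesic, contradicting the separation just described. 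Without an analogue of $\cR_g$ and that separation, your bad case cannot be closed, so the proposal as it stands does not prove the lemma.
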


\begin{proof}
Let $\cR_g$ be the set of geodesic double rays from hyperbolic boundary points fixed by~$g$ to~$g^+$ and geodesic rays from~$o$ to~$g^+$.
By Lemma~\ref{lem_hyperDi9.4} and because of the definition of hyperbolic digraphs and of the topology of their hyperbolic boundary, there exists an open neighbourhood $U$ of~$g^+$ and a vertex $x_U\in U$ on a geodesic double ray whose subrays are in~$g^+$ such that $S_U:=\cB^+_{6\delta}(x_U)\cap U$ meets every geodesic double ray in~$\cR_g$ whose subrays lie in~$g^+$ and every geodesic (ray) from~$o$ to~$U$ or~$g^+$.
We may assume that $x_U$ lies on one of those geodesic double rays in~$\cR_g$.
Let $V$ be an open neighbourhood of~$\eta$.
We choose $x_V$ and~$S_V$ analogously to $x_U$ and~$S_U$ except that we just take the geodesic rays from~$o$ in~$\eta$ into account instead of~$\cR_g$.
Since there are only finitely many hyperbolic boundary points fixed by~$g$ by Lemmas~\ref{lem_nonEllipticFixFinite1} and~\ref{lem_nonEllipticFixFinite2}, we may assume
\[
(U\cup S_U)\cap (V\cup S_V)=\es
\]
by Lemma~\ref{lem_disjointBNeighbourhoods}.
We may assume that
\begin{equation}\label{itm_projectivity_5}
d(\cR_g,S_V\cup V)>16\delta+\lambda
\end{equation}
for
\[
\lambda:=6\delta+2\delta\varphi(\delta+1)
\]
and, furthermore, we may assume that the geodesics from $S_V$ to $V$ also have at least that distance from~$\cR_g$.
Let $o_V$ be a vertex in $\bigcup \cR_g$ with $d(o_V,S_V\cup V)<\infty$ smallest possible; this vertex exists since $o$ satisfies $d(o,S_V\cup V)<\infty$.
If necessary, we change $U$ and $S_U$ so that $S_U$ also meets all geodesics from~$o_V$ to~$U$, which is possible in the same way we did it for~$o$.
Since $g$ is not elliptic, there exists $n\in\N$ such that $g^n(o)$ and $g^n(o_V)$ lie in~$U$, such that for
\[
\kappa=(12\delta+4\delta \varphi(\delta+1)+1)\varphi(\delta+1)
\]
and for all $v\in S^+_U:=\cB^+_\kappa(S_U)$, we have
\begin{equation}\label{itm_projectivity_3}
(2\delta+(d(o,o_V)+\delta)\varphi(\delta+1))\varphi(\delta+1)<\min\{d(v,g^n(o)),d(v,g^n(o_V))\}
\end{equation}
and
\begin{equation}\label{itm_projectivity_7}
\begin{aligned}
&(\delta+\lambda+d(o,o_V)+(\max\{d(o_V,v)\mid v\in S_V^+\}+\lambda)\varphi(\delta+1))\varphi(\delta+1)\\
<&\min\{d(v,g^n(o),d(v,g^n(o_V))\},
\end{aligned}
\end{equation}
with $S^+_V:=\cB^+_\kappa(S_V)$.
Note that (\ref{itm_projectivity_3}) implies by Proposition~\ref{prop_hyperDi3.3} the following.
\begin{txteq}\label{itm_projectivity_0}
No geodesic that starts either at $g^n(o)$ or at $g^n(o_V)$ and that ends in $g^n(S_V\cup V)$ meets $\cB^+_\delta(S^+_U)$.
\end{txteq}
We may assume that we have chosen $S_U$ and~$U$ as well as $S_V$ and $V$ so that
\begin{equation}\label{itm_projectivity_2}
\min\{d(o,S_V^+),d(o_V,S_V^+)\}>(d(o,o_V)+\lambda)\varphi(\delta+1).
\end{equation}
Let $U^+$ be the set of vertices~$x$ such that some $o$-$x$ geodesic meets~$S^+_U$ and let $V^+$ be the set of vertices~$x$ such that some $o$-$x$ geodesic meets~$S^+_V$.

Let $x\in g^n(V^+)$, let $P_1$ be an $o$-$g^n(o_V)$ geodesic, $P_2$ a $g^n(o_V)$-$x$ geodesic and $P_3$ an $o$-$x$ geodesic.
Let $a$ be a vertex on~$P_1$ in~$S_U$.
Then there exists a vertex $b$ either on~$P_3$ in $\cB^-_\delta(a)$ or on~$P_2$ in $\cB^+_\delta(a)$.

Let us first assume that there is $b\in\cB^-_\delta(a)$ on~$P_3$.
By Lemma~\ref{lem_hyperDi3.4}, there exists $c$ on $P_1\cup P_2$ with $d(c,b)\leq \lambda$.
If $c$ lies on either $aP_1$ or on $P_2$, then there exists a directed $a$-$b$ path and hence Proposition~\ref{prop_hyperDi3.3} implies $d(a,b)\leq (\delta+1)\varphi(\delta+1)\leq\kappa$.
So $b$ lies in~$S_U^+$ and hence $x\in U^+$.
If $c$ lies on $P_1a$, then we have $d(c,a)\leq\delta+\lambda$.
Let $d$ be on~$P_3$ with $d(b,d)=\delta+\lambda$, if possible, and $d=x$ otherwise.
Again, by Lemma~\ref{lem_hyperDi3.4}, there exists a vertex $c_d$ on $P_1\cup P_2$ with $d(c_d,d)\leq\lambda$.
This vertex must lie on either $aP_1$ or on $P_2$, since $d(o,b)\geq d(o,a)-\delta$.
So there exists a directed $a$-$d$ path and we apply Proposition~\ref{prop_hyperDi3.3} in order to obtain
\begin{align*}
d(a,d)&\leq (d(b,a)+d(b,d))\varphi(\delta+1)\\
&\leq (2\delta +\lambda)\varphi(\delta+1)
\end{align*}
which shows $d\in S_U^+$ and hence $x\in U^+$.

Let us now assume that there exists $b\in\cB^+_\delta(a)$ on~$P_2$.
We consider a geodesic triangle with end vertices $g^n(o)$, $g^n(o_V)$ and~$x$ such that $P_2$ is the side from $g^n(o_V)$ to~$x$.
We may assume that there exists $y\in g^n(S_V^+)$ on the side from $g^n(o)$ to~$x$.
We have $d(z,y)\leq\lambda$ for some $z$ on~$P_2$, since Proposition~\ref{prop_hyperDi3.3} leads to a contradiction to (\ref{itm_projectivity_2}) otherwise.

If $b$ lies on $g^n(o_V)P_2z$ or if $d(a,z)\leq \delta+\lambda$, then we have with $b'=b$ in the first case and $b'=z$ in the second case the following
\begin{align*}
d(a,g^n(o))&\leq (d(a,b')+d(g^n(o),b'))\varphi(\delta+1)\\
&\leq (\delta+\lambda+d(g^n(o),g^n(o_V))+d(g^n(o_V),b'))\varphi(\delta+1)\\
&\leq (\delta+\lambda+d(o,o_V)+d(g^n(o_V),z))\varphi(\delta+1)\\
&\leq (\delta+\lambda+d(o,o_V)+(d(g^n(o_V),y)+\lambda)\varphi(\delta+1))\varphi(\delta+1)\\
&\leq (\delta+\lambda+d(o,o_V)+(\max\{d(o_V,v)\mid v\in S_V^+\}+\lambda)\varphi(\delta+1))\varphi(\delta+1),
\end{align*}
which contradicts~(\ref{itm_projectivity_7}).

Now let $b$ be on $zP_2$ and assume $d(a,z)>\delta+\lambda$.
Considering the geodesic triangle with $z$, $y$ and~$x$ as end vertices, we conclude that $b$ must lie within the $\delta$-in-ball around the side from~$y$ to~$x$.
Thus, $a$ is a vertex on~$\cR_g$ with distance at most $2\delta$ to a vertex on a $g^n(S_V^+)$-$g^n(V^+)$ geodesic.
This is a contradiction to the choice of that distance as chosen after (\ref{itm_projectivity_5}).
This shows that the case that some $b\in\cB^+_\delta(a)$ lies on~$P_2$ always leads to a contradiction.
Thus, we have shown the assertion, that is, we have shown $g^n(V^+)\sub U^+$.

We note that the choices of~$V$ and~$U$ only depended on large enough distances from certain fixed vertex sets.
Thus, for all smaller neighbourhoods of~$g^+$ and~$\eta$, the statement stays true, which shows the assertion.
\end{proof}

As a corollary of Lemma~\ref{lem_projectivity}, we obtain the following.

\begin{cor}\label{cor_ellipticImageOfNonElliptic}
Let $D$ be a rooted hyperbolic digraph with constant out-degree and bounded in-degree.
Let $g$ and $h$ be \se s of~$D$ such that $g$ is elliptic and $h$ is not elliptic and such that $g(h^+)$ is not fixed by~$h$.
Then there exists $i\in\N$ such that $gh^i$ is not elliptic.

Furthermore, if $U$ is a neighbourhood of~$h^+$ and $S_U$ a finite vertex set that meets every $o$-$U$ geodesic, then $(gh^i)^+$ lies in the closure of $g(U^+)$, where $U^+$ is the set of vertices $x$ such that some $o$-$x$ geodesic meets $\cB^+_{\kappa}(S_U)$ for
\[
\kappa=(12\delta+4\delta \varphi(\delta+1)+1)\varphi(\delta+1),
\]
where $\varphi\colon\R\to\R$ is a function such that $D$ satisfies (\ref{itm_Bounded1}) and (\ref{itm_Bounded2}) with respect to~$\varphi$.
\end{cor}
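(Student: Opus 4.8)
The goal is Corollary~\ref{cor_ellipticImageOfNonElliptic}: given an elliptic $g$ and a non-elliptic $h$ with $g(h^+)$ not fixed by~$h$, produce $i\in\N$ so that $gh^i$ is non-elliptic, and locate $(gh^i)^+$ in the closure of $g(U^+)$. The natural strategy is to combine the projectivity statement of Lemma~\ref{lem_projectivity} (applied to~$h$, which is non-elliptic) with the ``translation'' criterion of Lemma~\ref{lem_translationIsHyper}. The point is that $gh^i$ should eventually behave like a translation pushing a suitable vertex set strictly inside itself, which forces non-ellipticity and simultaneously pins down its direction.

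First I would set $\eta:=g(h^+)$ and observe that by hypothesis $\eta$ is not fixed by~$h$; in fact I would want it fixed by no power $h^i$. This is exactly the hypothesis needed to invoke Lemma~\ref{lem_projectivity} with the given~$h$ and this~$\eta$. Since $g$ is elliptic, Corollary~\ref{cor_ellipticIsAuto} makes $g$ an automorphism, so $g$ maps neighbourhoods of~$h^+$ homeomorphically to neighbourhoods of $\eta=g(h^+)$, and in particular $g(U^+)$ is (up to the usual $\kappa$-thickening bookkeeping) the ``$V^+$-type'' set around~$\eta$ that Lemma~\ref{lem_projectivity} expects. Applying that lemma to the neighbourhoods $U^\circ$ of~$h^+$ and $V^\circ:=g(U^\circ)$ of~$\eta$, I obtain for all sufficiently small $U,V$ with the finite hitting sets $S_U,S_V$ an $n$ with $h^n(V^+)\sub U^+$. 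Writing $V^+$ as (the thickening of) $g(U^+)$, this reads $h^n g(U^+)\sub U^+$, i.e.\ $(gh^n)$ composed in the right order pushes $U^+$ into itself.

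The key manipulation is then to feed $W:=U^+\sm\{o\}$ (or a shifted copy guaranteeing $o\notin W$) into Lemma~\ref{lem_translationIsHyper}: once I have $gh^i(W\cup\{o\})\sub W$, that lemma immediately yields that $gh^i$ is not elliptic and that $(gh^i)^+$ lies in the closure of~$W\sub g(U^+)$. To get the containment in the exact form Lemma~\ref{lem_translationIsHyper} demands, I would compose the inclusion $h^n(V^+)\sub U^+$ with the action of~$g$, using that $g$ is an automorphism so that $g(U^+)$ and $V^+$ agree up to controlled thickening, and check that $o\notin W$ can be arranged by discarding finitely many vertices near the root (the neighbourhoods can be taken not to contain~$o$ since $h^+\neq\eta$, which is guaranteed because $g(h^+)$ is not $h$-fixed while $h^+$ is). The final sentence about the direction is then read off directly from the conclusion of Lemma~\ref{lem_translationIsHyper}.

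The main obstacle I expect is reconciling the two different ``thickened'' sets: Lemma~\ref{lem_projectivity} delivers the inclusion between $V^+$ and $U^+$ defined via $\cB^+_\kappa(S_V)$ and $\cB^+_\kappa(S_U)$, whereas I want a clean self-containment $gh^i(W\cup\{o\})\sub W$ with a single set~$W$. Because $g$ is an isometry (an automorphism, hence a $(1,0)$-\qiy), it commutes with the ball and geodesic constructions defining these sets, so $g(U^+)$ is genuinely a set of the required form around $\eta$; nonetheless I must verify that applying $g$ to the inclusion $h^n(V^+)\sub U^+$ and reindexing the exponent produces an inclusion of one and the same set into itself rather than of one thickening into a different one. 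This bookkeeping, together with ensuring $o$ stays outside the pushed set, is where the routine but careful work lies; the geometric content is entirely supplied by Lemmas~\ref{lem_projectivity} and~\ref{lem_translationIsHyper} and Corollary~\ref{cor_ellipticIsAuto}.
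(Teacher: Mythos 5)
Your proposal follows essentially the same route as the paper: invoke Lemma~\ref{lem_projectivity} for the non-elliptic~$h$ and $\eta=g(h^+)$, use that the elliptic~$g$ is an automorphism (Corollary~\ref{cor_ellipticIsAuto}) to transport the neighbourhood data so that $V^+=g(U^+)$, derive $gh^n(g(U^+))\sub g(U^+)$, and conclude non-ellipticity and the location of $(gh^n)^+$ from Lemma~\ref{lem_translationIsHyper}. The paper's own proof is exactly this (choosing $U\sub U^\circ$ with $g(U)\sub V^\circ$ rather than declaring $V^\circ:=g(U^\circ)$, a cosmetic difference), and it glosses over the same bookkeeping points you flag, so your plan is correct and matches it.
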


\begin{proof}
Let $U^\circ$ and $V^\circ$ be obtained by Lemma~\ref{lem_projectivity}.
Then there exists an open neighbourhood $U$ of~$h^+$ such that $U\sub U^\circ$ and $g(U)\sub V^\circ$.
Let $n\in\N$ be obtained by Lemma~\ref{lem_projectivity} for~$U$.
Then we have $gh^n(g(U^+))\sub g(U^+)$.
Thus, Lemma~\ref{lem_translationIsHyper} implies that the \se\ $gh^n$ is not elliptic and $(gh^n)^+$ lies in the closure of~$g(U^+)$.
\end{proof}

We have already mentioned that, despite of Example~\ref{ex_DirectionsNotDenseInLimit}, the directions are spread out quite well among the limit set.
We will now prove that in the following proposition.

\begin{prop}\label{prop_DirectionsAlmostDenseInLimit}
Let $D$ be a rooted hyperbolic digraph with constant out-degree and bounded in-degree and let $M$ be a monoid of \se s of~$D$.
Let $|\cL(M)|\geq 2$ and let $\eta\in\cL(M)$.
Then every for open neighbourhood $U$ of~$\eta$ there is an element $\mu\in\cD(M)$ such that there exists a geodesic ray in~$\mu$ that starts at~$o$ has a vertex in~$U$.
\end{prop}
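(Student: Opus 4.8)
The plan is to find a non-elliptic $f\in M$ such that the geodesic ray from~$o$ representing its direction $f^+$ has a vertex in~$U$; then $\mu:=f^+\in\cD(M)$ is as required. Since $\eta\in\cL(M)$, fix a sequence $(g_i)_{i\in\N}$ in~$M$ with $g_i(o)\to\eta$, and shrink~$U$ so that it is a cone neighbourhood of~$\eta$ with gate $S_U$ (a finite vertex set met by every geodesic from~$o$ to a vertex of~$U$) and $o\notin U$; as $\eta$ contains rays I may also use Lemma~\ref{lem_disjointBNeighbourhoods} to keep~$U$ disjoint from a neighbourhood of a second limit point. For all large~$i$ we then have $g_i(o)\in U$, arbitrarily deep (i.e.\ with $\rho(g_i(o),\eta)$ arbitrarily large).

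If infinitely many $g_i$ are not elliptic, I would take $f:=g_i$ for such a large~$i$. By Proposition~\ref{prop_nonEllipticQIOfN} the points $o,g_i(o),g_i^2(o),\dots$ trace a \qg\ converging to $g_i^+$, and since $g_i(o)$ lies well beyond the gate~$S_U$ on it, a Gromov-product estimate (via Proposition~\ref{prop_hyperDi3.3}) shows that the $o$-$g_i^n(o)$ geodesics, and hence a geodesic ray from~$o$ in $g_i^+$, pass through~$S_U\sub U$. So in this case $g_i^+$ already works.

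The remaining, and principal, case is that all but finitely many $g_i$ are elliptic, so automorphisms by Corollary~\ref{cor_ellipticIsAuto}. Here the hypothesis $|\cL(M)|\ge 2$ is essential: using a second limit point $\zeta\ne\eta$ and an approaching sequence for it, I first need to secure a \emph{non-elliptic} element $h\in M$. Producing such an~$h$ is the main obstacle; I expect to obtain it from the two approaching sequences by a ping-pong / translation argument, exploiting disjoint cone neighbourhoods of~$\eta$ and~$\zeta$ (Lemma~\ref{lem_disjointBNeighbourhoods}) together with Lemma~\ref{lem_translationIsHyper}, which certifies non-ellipticity once some element maps a cone, together with~$o$, strictly into itself. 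Granting a non-elliptic~$h$, fix a large~$i$ with $g_i$ elliptic and $g_i(o)\in U$. Since $h$ fixes only finitely many boundary points (Lemmas~\ref{lem_nonEllipticFixFinite1} and~\ref{lem_nonEllipticFixFinite2}), I can arrange that $g_i(h^+)$ is not fixed by~$h$ (adjusting~$i$, or replacing~$h$), and then Corollary~\ref{cor_ellipticImageOfNonElliptic} yields $k\in\N$ with $f:=g_ih^k$ non-elliptic and $f^+\in\overline{g_i(U_h^+)}$, where $U_h^+$ is the forward cone around~$h^+$ with gate~$S_{U_h}$ at a fixed, bounded distance from~$o$.

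Finally I would verify that a geodesic ray from~$o$ in $f^+$ meets~$U$. The transported gate $g_i(S_{U_h})$ lies within a bounded distance of $g_i(o)$, hence arbitrarily deep towards~$\eta$ for large~$i$; by a thin-triangle estimate (Proposition~\ref{prop_hyperDi3.3} and Lemma~\ref{lem_hyperDi3.4}) every $o$-$p$ geodesic with $p\in g_i(U_h^+)$ therefore tracks the $o$-$g_i(o)$ geodesic past the gate~$S_U$, so it passes through~$U$; since $f^+$ lies in the closure of $g_i(U_h^+)$, the same holds for a geodesic ray from~$o$ in $f^+$. The two delicate points are thus the existence of the non-elliptic element~$h$ (where two distinct limit points are indispensable, a single approaching elliptic sequence being able to fold a cone rather than translate it) and this last estimate placing the transported cone inside~$U$ as seen from~$o$.
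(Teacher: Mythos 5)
Your skeleton matches the paper's (split according to whether the approximating sequence $(g_i)$ is elliptic, use a second limit point, manufacture a non-elliptic element by composition, certify non-ellipticity via Lemma~\ref{lem_translationIsHyper} and Corollary~\ref{cor_ellipticImageOfNonElliptic}), but both halves have genuine gaps. In the non-elliptic case, your step ``$g_i(o)$ lies well beyond the gate $S_U$, hence the $o$-$g_i^n(o)$ geodesics and a geodesic ray in $g_i^+$ pass through $S_U$'' fails. First, $g_i^+$ need not be anywhere near $\eta$: already for a hyperbolic isometry of a tree whose axis is far from the basepoint, the geodesic from $o$ to $g(o)$ may enter a cone only along its terminal ``hang-down'' segment (the image of the path from the axis back to $o$), in which case $g^2(o)$ and the ray towards $g^+$ miss the cone entirely even though $g(o)$ is arbitrarily deep in it. Second, the quasi-geodesic constants supplied by Proposition~\ref{prop_nonEllipticQIOfN} are of the form $\gamma=2Nd(v,g(v))$, i.e.\ they grow with $d(o,g_i(o))$ along your sequence, so the tracking constant from Lemma~\ref{lem_HyperDi_7.3} cannot be beaten by taking $g_i(o)$ ``deep enough''; the calibration is circular. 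The paper instead argues by contradiction: it picks $x$ deep on the limit ray with $\cB^-_\delta(x)\subseteq U$ and, crucially, an index $i$ for which the \emph{entire forward orbit} $\{g_i^k(o)\mid k\geq 1\}$ stays far from $x$; a ray in $g_i^+$ avoiding $U$ then forces $d(g_i^k(o),z)<d(g_i^{k+1}(o),z)$ for $z$ close to $g_i^+$, contradicting the convergence $g_i^k(o)\to g_i^+$. Some such global condition on the orbit is exactly what excludes the hang-down scenario, and your Gromov-product estimate does not provide it.

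In the elliptic case you correctly identify the crux---producing a usable non-elliptic element---but leave it at ``I expect to obtain it by ping-pong'', and this is where most of the paper's proof lives. From the two sequences one can only establish the gate-ball containments $g_i(S_U^+\cup S_V^+)\subseteq U^+$ and $h_i(S_U^+\cup S_V^+)\subseteq V^+$, not the full cone containments. When $g_i(V^+)\subseteq U^+$ and $h_i(U^+)\subseteq V^+$ do hold, $g_ih_i$ is non-elliptic by Lemma~\ref{lem_translationIsHyper} and its direction is as desired; but when they fail, the paper needs separate arguments: $g_i(V^+)\not\subseteq U^+$ leads, via a geodesic anti-ray built from powers $g_{i_n}^m$ fixing long initial segments (Lemma~\ref{lem_ellipticFixBalls}), to $\eta=\mu$ by Corollary~\ref{cor_corOf14.1}, a contradiction; and $h_i(U^+)\not\subseteq V^+$ with the $h_i$ non-elliptic (so $h_i=h_1^i$) splits further according to whether the $g_i$ fix $h_1^+$ and whether some $h_i$ fixes $\eta$, requiring Lemma~\ref{lem_projectivity}, Corollary~\ref{cor_nonEllipticFixFinite2}, Lemma~\ref{lem_PowerHyperPara}, and finally an invariance-and-ordering argument on the geodesic double rays from $\eta$ to $h_1^+$ played off against Lemma~\ref{lem_ellipticFixBalls}. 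Your appeal to Corollary~\ref{cor_ellipticImageOfNonElliptic} after arranging that $g_i(h^+)$ is not fixed by $h$ reproduces one of these sub-cases, but the degenerate configurations---every available non-elliptic element fixes $\eta$ or has its direction fixed by all the $g_i$---are precisely the ones your sketch does not close. Your final localization step (transported gate within bounded distance of $g_i(o)$, thin triangles pushing $o$-$p$ geodesics through $S_U$) is sound in spirit and corresponds to the paper's arrangement that $U^+$ lie inside a smaller neighbourhood $U^\circ$ of $\eta$.
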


\begin{proof}
Let $\varphi\colon\R\to\R$ be a function such that $D$ satisfies (\ref{itm_Bounded1}) and (\ref{itm_Bounded2}) with respect to~$\varphi$.
Let $(g_i)_{i\in\N}$ be a sequence in~$M$ such that $(g_i(o))_{i\in\N}$ converges to~$\eta$.
Let us suppose that there is an open neighbourhood $U$ of~$\eta$ such that for all $\mu\in\cD(M)$ all geodesic rays in~$\mu$ that start at~$o$ are disjoint from~$U$.
By considering a subsequence of $(g_i)_{i\in\N}$, we may assume that either all $g_i$ are elliptic or none of them is.

\medskip

Let us first assume that no $g_i$ is elliptic.
Let $R$ be a geodesic ray defined by $o$-$g_i(o)$ geodesics.
In particular, $R$ starts at~$o$.
Let $x$ be on~$R$ such hat $\cB_\delta^-(x)\sub U$.
This vertex exists since $d_h$ is a visual pseudo-semimetric.
Let $i\in\N$ such that $x$ lies on an $o$-$g_i(o)$ geodesic that was used for the definition of~$R$ and such that
\begin{equation}\label{itm_DirectionsAlmostDenseInLimit_1}
(d(o,x)+2\delta)\varphi(\delta+1)<d(x,g_i^k(o))
\end{equation}
for all $k\geq 1$.
We consider a triangle with end vertices $o$, $g_i(o)$ and some vertex $z$ on a geodesic ray $Q$ in~$g_i^+$ that starts at~$o$ and avoids~$U$ such that $d(g_i(o),z)<\infty$.
Note that eventually all vertices on that ray satisfy the additional requirement.
Then there exists $y$ on the side from $g_i(o)$ to~$z$ with $d(x,y)\leq\delta$ since $Q$ avoids~$U$ and $\cB^-_\delta(x)\sub U$.
Then (\ref{itm_DirectionsAlmostDenseInLimit_1}) and Proposition~\ref{prop_hyperDi3.3} imply
\[
d(o,x)+\delta<d(x,g_i(o))/\varphi(\delta+1)-\delta\leq d(g_i(o),y).
\]
Thus, we have
\[
d(o,z)\leq d(o,x)+\delta+d(y,z)<d(g_i(o),y)+d(y,z)=d(g_i(o),z).
\]
Applying this argument recursively, we obtain
\[
d(g_i^k(o),z)<d(g_i^{k+1}(o),z)
\]
for all those $k\in\N$ with $d(g_i^{k+1}(o),z)<\infty$.
Since we may have chosen $z$ within any neighbourhood of~$g_i^+$, this shows that the sequence $(g_i^k(o))_{k\in\N}$ does not converge to~$g_i^+$.
This contradiction finishes the case that no $g_i$ is elliptic.

\medskip

Let us now assume that all $g_i$ are elliptic.
As in the proof of Lemma~\ref{lem_projectivity}, we may assume that there exists a finite subset $S_U$ of~$U$ such that all geodesic rays in~$\eta$ that start at~$o$ contain a vertex of~$S_U$.
Set $S_U^+:=\cB^+_\kappa(S_U)$ for
\[
\kappa:=(2\lambda+1)\varphi(\delta+1)+\lambda
\]
with $\lambda:=6\delta+2\delta\varphi(\delta+1)$.
Let $U^+$ be the set of all vertices~$u$ such that some $o$-$u$ geodesic  meets~$S_U^+$.
Let $\mu\in\cL(M)$ be distinct from~$\eta$ and let $V$ be an open neighbourhood of~$\mu$.
Define $S_V$, $S_V^+$ and $V^+$ analogously to $S_U$, $S_U^+$ and $U^+$, respectively.
Let $(h_i)_{i\in\N}$ be a sequence in~$M$ such that $(h_i(o))_{i\in\N}$ converges to~$\mu$.
By taking suitable subsequences, we may assume that $S_U$ contains vertices of some $o$-$g_i(o)$ geodesic for every $i\geq 1$ and that $S_V$ contains vertices of some $o$-$h_i(o)$ geodesic for every $i\geq 1$.

Let $i\geq 1$ and $x\in g_i(S_U^+\cup S_V^+)$.
Let $Q_1$ be an $o$-$g_i(o)$ geodesic that contains a vertex of $S_U$, $Q_2$ a $g_i(o)$-$x$ geodesic and $Q_3$ an $o$-$x$ geodesic.
Then all but at most the last $d(g_i(o),x)+\delta$ vertices of~$Q_3$ lie in $\cB^+_\delta(Q_1)$ as $Q_3$ is geodesic.
Thus, for $i$ large enough, this implies that there are $u,v$ on~$Q_3$ such that $v$ is the out-neighbour of~$u$ on~$Q_3$ and $u$ lies within the $\delta$-out-ball of the directed subpath of~$Q_1$ from $o$ to the first vertex $y$ of~$S_U$ and $v$ within the $\delta$-out-ball around $yQ_1g_i(o)$ but not in $\cB^+_\delta(y)$.
Applying Proposition~\ref{prop_hyperDi3.3}, we obtain that $Q_3$ contains a vertex of distance at most $(2\delta+1)\varphi(\delta+1)\leq\kappa$ from~$S_U$.
So $P_3$ contains a vertex of~$S_U^+$.
Thus, we have shown the following.
\begin{txteq}\label{itm_DirectionsAlmostDenseInLimit_2}
If $i$ is large enough, then we have $g_i(S_U^+\cup S_V^+)\sub U^+$.
\end{txteq}
Analogously, we obtain the following.
\begin{txteq}\label{itm_DirectionsAlmostDenseInLimit_3}
If $i$ is large enough, then we have $h_i(S_U^+\cup S_V^+)\sub V^+$.
\end{txteq}
So let us now assume that $i$ is large enough such that is satisfies $g_i(S_U^+\cup S_V^+)\sub U^+$ and $h_i(S_U^+\cup S_V^+)\sub V^+$.

Since $g_i$ is elliptic, Lemma~\ref{lem_translationIsHyper} implies that there exists $y\in g_i(U^+)\sm U^+$.
Let $P_1=Q_1$, let $P_2$ be a $g_i(o)$-$y$ geodesic and let $P_3$ be an $o$-$y$ geodesic.
Let $a$ be the last vertex on $P_3$ such that there exists $a'$ on~$P_1$ before $S_U$ with $d(a',a)\leq \lambda$ and let $a^+$ be the out-neighbour of~$a$ on~$P_3$.
By Lemma~\ref{lem_hyperDi3.4}, the vertex $a^+$ lies in the out-ball of radius $\lambda$ around $P_1$ or~$P_2$.
If it lies in that out-ball around~$P_1$, then a vertex $a''$ on~$P_1$ with $d(a'',a^+)\leq \lambda$ must lie in or after $S_U$.
Thus, we can apply Proposition~\ref{prop_hyperDi3.3} and obtain
\[
d(a',a'')\leq (2\lambda+1)\varphi(\delta+1)\leq\kappa-\lambda,
\]
which implies $a^+\in \cB^+_\kappa(S_U)$, a contradiction to the choice of~$y$.
Thus, there exists $b_y$ on~$P_2$ with $d(b_y,a^+)\leq \lambda$.

We have
\[
d(o,a)\leq \max\{d(o,v)\mid v\in S_U\}+\lambda.
\]
Thus, and since any $o$-$b_y$ geodesic $P_4$ lies in $\cB^+_\delta(o,a^+)\cup \cB^-_\delta(b_y,a^+)$, we have
\begin{align*}
&d(o,b_y)\\
\leq\ &d(o,a)+1+\delta+(\delta+\lambda)\varphi(\delta+1)\\
\leq\ &\max\{d(o,v)\mid v\in S_U\}+\delta+\lambda+1+(\delta+\lambda)\varphi(\delta+1).
\end{align*}
Since $g_i(o)P_2 b_y$ lies in $\cB^+_\delta(P_1)\cup \cB^-_\delta(P_4)$ and since the distance from~$o$ to $\cB^-_\delta(P_4)$ is at most $(d(o,b_y)+\delta)\varphi(\delta+1)$ by Proposition~\ref{prop_hyperDi3.3}, all vertices of $g_i(o)P_2 b_y$ that are further away from~$o$ must lies in $\cB^+_\delta(P_1)$.

\medskip

If $g_i(V^+)\sub U^+$ and $h_i(U^+)\sub V^+$, then $g_ih_i(U^+)\sub U^+$ and $g_ih_i(o)\in U^+$.
So Lemma~\ref{lem_translationIsHyper} implies that $g_ih_i$ is not elliptic.
Since all $g_ih_i(o)$ lie in~$U^+$, there exists a geodesic ray in $(g_ih_i)^+$, one that is defined by infinitely many directed $o$-$(g_ih_i)^n(o)$ paths, that lies eventually in $U^+$.
Since $d_h$ is visual, we may have chosen $U$ and~$S_U$ such that $U^+$ lies in a neighbourhood $U^\circ$ of~$\eta$ which avoids all geodesic rays from~$o$ to elements of $\cD(M)$, similar the way we asked it for~$U$.
But the geodesic ray from~$o$ that lies in $(g_ih_i)^+$ is not disjoint from~$U^\circ$, a contradiction that shows that we have either $g_i(V^+)\not\sub U^+$ or $h_i(U^+)\not\sub V^+$.

\medskip

First, let us assume $g_i(V^+)\not\sub U^+$.
Then there exists $z\in g_i(V^+)\sm U^+$.
By an analogous argument as for~$y$, we find $b_z$ on a $g_i(o)$-$z$ geodesic $P_5$ with similar properties as~$b_y$, in particular, that all vertices on~$P_5$ that have distance more than $(d(o,b_y)+\delta)\varphi(\delta+1)$ from~$o$ must lie in $\cB^+_\delta(P_1)$.

Let $R_\eta$ and $R_\mu$ be geodesic rays in~$\eta$ and~$\mu$, respectively, that start at~$o$.
These exist by \cite[Proposition 9.3]{H-HyperDiBound}.
For every $n\in\N$, there exists $i_n\in\N$ such that the subpaths $P_\eta^n$ and $P_\mu^n$ of~$R_\eta$ and~$R_\mu$, respectively, of lengths~$n$ starting at~$o$ satisfy the following: $g_{i_n}(P^n_\eta)$ and $g_{i_n}(P^n_\mu)$ lie in $\cB^+_\delta(P^n)$, where $P^n$ is an $o$-$g_{i_n}(o)$ geodesic.
Since $g_i$ is elliptic, Lemma~\ref{lem_ellipticFixBalls} implies the existence of some $m\in\N$ with $g_{i_n}^m(P_\eta^n)=P_\eta^n$ and $g_{i_n}^m(P_\mu^n)=P_\mu^n$.
The paths $g_{i_n}^m(P^n)$ define a geodesic anti-ray~$Q$ that ends at~$o$.
Let $\omega$ be the hyperbolic boundary point that contains~$Q$.
Note that there are infinitely many directed $Q$-$R_\eta$ paths and $Q$-$R_\mu$ paths of length at most~$\delta$.
This implies $\eta=\omega=\mu$ by Corollary~\ref{cor_corOf14.1}, in contradiction to our assumption $\eta\neq\mu$.

\medskip

So let us now consider the case that $h_i(U^+)\not\sub V^+$.
If there are infinitely many $h_i$ that are elliptic, then we may assume that all $h_i$ are elliptic and an argument that is completely analogous to the previous one leads to a contradiction.
Thus, we may additionally assume that no $h_i$ is elliptic.
We may assume in this situation that $h_j=h_1^j$.
So in particular, we have $h_1^+=h_j^+$ for all $i,j\in\N$.

If $g_i$ does not fix $h_1^+$, then there is an open neighbourhood $W\sub V$ of~$h_1^+$ such that $W$ and $g_i(W)$ are disjoint.
Corollary~\ref{cor_ellipticImageOfNonElliptic} implies that there exists $n\in\N$ such that $f_i:=g_ih_i^n$ is not elliptic and $f_i^+$ lies in the closure of $g_i(W)$.
Since $g_i(V^+)\sub U^+$, our assertion holds in this case as we have shown above, since we may assume that $U^+$ lies in a neighbourhood $U^\circ$ of~$\eta$ that avoids all rays from~$o$ in elements of $\cD(M)$.

So let us assume that all $g_i$ fix $h_1^+$.
Let us first assume that no $h_i$ fixes~$\eta$.
We may have chosen $U$ and~$V$ so that they are obtained by Lemma~\ref{lem_projectivity}.
Then for $n\in\N$ from that lemma, we obtain $h_i^n(U^+)\sub V^+$.
This is a contradiction to our assumption since $h_i^n=h_{ni}$.

Let us now assume that some $h_i$ fixes~$\eta$.
There exists a geodesic ray $R_h$ in $h_1^+$ starting at~$o$ by \cite[Proposition 9.3]{H-HyperDiBound}.
We consider all images $g_i^n(R_h)$.
Since they all lie in $h_1^+$, infinitely many of them share a common vertex among which infinitely many share a common in- and a common out-neighbour.
Recursively, we obtain a geodesic double ray~$Q$ whose subrays lie in~$h_1^+$.
Considering a geodesic triangle with end vertices $o$, $g_i(o)$ and $h_i(o)$, the anti-rays in~$Q$ must lie in some $\omega\in\rand D$ with $d(\eta,\omega)=0$.
Then Corollary~\ref{cor_nonEllipticFixFinite2} implies that $h_i$ is hyperbolic, in particular, that $\eta$ and $h_i^+$ are the only hyperbolic boundary points fixed by~$h_i$.
By Lemma~\ref{lem_PowerHyperPara}, all $h_j$ fix $\eta$ and~$h_1^+$.
Let $\cR$ be the set of all geodesic double rays from~$\eta$ to~$h_1^+$.
Since all $h_j$ and all $g_j$ fix $\eta$ and $h_1^+$, they must also leave $\cR$ invariant.
Let
\[
M:=\max\{d(o,R)\mid R\in \cR\}
\]
and let $u$ be a vertex on a double ray $R\in \cR$ with $d(o,u)=d(o,R)$.
We may assume that $d(o,S_U)>M+\delta$.
Let $v$ be a vertex on~$R$ that lies in~$U$; note that this vertex exists since some anti-subray of~$R$ lies in~$\eta$.
Considering the geodesic triangle with end vertices $o$, $u$ and~$v$ such that $R_1$ is an $o$-$u$ geodesic, $R_2$ is an $o$-$v$ geodesic and $R_3=vRu$.
Then there must be a vertex on~$R_3$ that lies in the $\delta$-out-ball of a vertex in $V(R_2)\cap S_U$.
This implies that $R$ meets $S_U^+$.
Since $(g_j(o))_{j\in\N}$ converges to~$\eta$, we find $j\in\N$ such that every vertex on the elements of~$\cR$ that lie in $\cB^+_M(g_j(o))$ lie in~$U$.
Thus, the elements of~$\cR$ first meet $g_j(S_U^+)$, $\cB^+_M(g_j(o))$, $S_U^+$, and $\cB^+_M(o)$ in this order.
This implies that the same is true for all powers of~$g_j$.
In particular, we do not find any $n\in\N$ with $g_j^n(o)=o$.
This contradiction to Lemma~\ref{lem_ellipticFixBalls} finishes the last case of this proof.
\end{proof}

Now we are able to determine all possible values of the numbers of limit points and the numbers of directions of monoids of \se s.

\begin{thm}\label{thm_numberOfLimits}\label{thm_numberOfDirections}
Let $D$ be a rooted hyperbolic digraph with constant out-degree and bounded in-degree and let $M$ be a monoid of \se s of~$D$.
The sets $\cL(M)$ and $\cD(M)$ have either none, one, two, or infinitely many elements.
\end{thm}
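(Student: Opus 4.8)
The plan is to prove the dichotomy for $\cL(M)$ and then transfer it to $\cD(M)$ via the fact that the two sets are finite simultaneously. I would begin by recording the two facts that drive the argument. First, by Proposition~\ref{prop_SeInduceInjection} every $g\in M$ induces an injective map $\hat g\colon\rand D\to\rand D$, and since a \se\ sends geodesic rays to geodesic rays, $\hat g$ maps ray-containing boundary points to ray-containing boundary points; hence $M$ leaves $\cL(M)$ invariant and acts on it by injective maps, so if $\cL(M)$ is finite then every element of $M$ permutes it. Second, this injectivity yields: if some non-elliptic $g\in M$ admits $\eta\in\cL(M)$ that is fixed by no power of $g$, then $g^a(\eta)=g^b(\eta)$ with $a<b$ would force $\eta=g^{b-a}(\eta)$, a contradiction, so $\{g^n(\eta)\mid n\in\N\}$ is an infinite subset of $\cL(M)$ and we are done. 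In particular, single elements alone cannot break finiteness, so the infinitude must come from the interaction of two elements.

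Next I would reduce $\cD(M)$ to $\cL(M)$. Clearly $\cD(M)\sub\cL(M)$. Conversely, suppose $\cD(M)$ is finite and $|\cL(M)|\ge 2$, and fix $\eta\in\cL(M)$. By Proposition~\ref{prop_DirectionsAlmostDenseInLimit}, every neighbourhood of $\eta$ contains a geodesic ray from $o$ lying in some $\mu\in\cD(M)$; as $\cD(M)$ is finite, one such $\mu$ works along a sequence of shrinking neighbourhoods, and the far-out vertices of its rays converge to $\mu$ and to $\eta$, giving $d_h(\mu,\eta)=0$. Since $\mu$ and $\eta$ both contain rays, Proposition~\ref{prop_hyperDi14.1} forces $\mu=\eta$, so $\eta\in\cD(M)$. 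Thus $\cL(M)=\cD(M)$, the two sets are finite together, and it suffices to rule out $3\le|\cL(M)|<\infty$.

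So assume $3\le|\cL(M)|<\infty$. Since $|\cL(M)|\ge 2$, Proposition~\ref{prop_DirectionsAlmostDenseInLimit} provides non-elliptic elements, and combining it with Lemma~\ref{lem_disjointBNeighbourhoods} and Proposition~\ref{prop_hyperDi14.1} (as in the previous paragraph) I would extract two non-elliptic $g,h\in M$ with distinct directions $g^+\neq h^+$. The plan from here is a ping-pong construction. After passing to suitable powers $x=g^s$, $y=h^t$ and shrinking the neighbourhoods, Lemma~\ref{lem_projectivity} (and Corollary~\ref{cor_ellipticImageOfNonElliptic} if it becomes convenient to feed in an elliptic factor) yields disjoint attracting shadows $U^{+}$ of $x^{+}$ and $V^{+}$ of $y^{+}$ with $x(U^{+}\cup V^{+})\sub U^{+}$ and $y(U^{+}\cup V^{+})\sub V^{+}$, where Lemma~\ref{lem_translationIsHyper} guarantees that such a self-map pushing a set into itself is non-elliptic. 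Then $x^{+}\in\overline{U^{+}}$ while $y^{n}(x^{+})\in\overline{V^{+}}$ for every $n\ge 1$, and since $\overline{U^{+}}\cap\overline{V^{+}}=\es$ we get $y^{n}(x^{+})\neq x^{+}$; that is, $x^{+}$ is fixed by no power of $y$. By the injectivity observation of the first paragraph, $\{y^{n}(x^{+})\mid n\in\N\}$ is then an infinite subset of $\cL(M)$, contradicting finiteness.

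The main obstacle is exactly the construction of these disjoint attracting shadows, i.e.\ verifying the ping-pong hypotheses $x(U^{+}\cup V^{+})\sub U^{+}$ and $y(U^{+}\cup V^{+})\sub V^{+}$ with $U^{+}$ and $V^{+}$ having disjoint closures. The quantitative input is the shadow estimate of Lemma~\ref{lem_projectivity}, which pushes a neighbourhood of one direction deep into a chosen neighbourhood of the other for all large powers; disjointness of the closures is arranged through Lemma~\ref{lem_disjointBNeighbourhoods}. The delicate case is the parabolic one, where $x$ or $y$ fixes several ray-containing boundary points: here I would use Lemmas~\ref{lem_nonEllipticFixFinite1} and~\ref{lem_nonEllipticFixFinite2} together with Corollaries~\ref{cor_nonEllipticFixFinite} and~\ref{cor_nonEllipticFixFinite2} to control the finitely many boundary points at $d_h$-distance $0$ from $x^{+}$ and $y^{+}$, ensuring that the two ping-pong regions can be kept genuinely disjoint and that the attracting behaviour is not spoiled by those extra fixed points.
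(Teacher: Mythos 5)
Your endgame coincides with the paper's: Proposition~\ref{prop_SeInduceInjection} makes the induced boundary map injective, so a limit point fixed by no power of a non-elliptic element has infinite orbit inside $\cL(M)$, and Proposition~\ref{prop_DirectionsAlmostDenseInLimit} transfers infinitude between $\cL(M)$ and $\cD(M)$. But the heart of the theorem is producing a pair with that non-fixing property, and there your argument is circular. Lemma~\ref{lem_projectivity} carries the hypothesis that $\eta$ \emph{is fixed by no power} $g^i$; so to build your attracting shadows $U^{+}$, $V^{+}$ with $x(U^{+}\cup V^{+})\sub U^{+}$ and $y(U^{+}\cup V^{+})\sub V^{+}$ you must already know that no power of $x$ fixes $y^{+}$ and no power of $y$ fixes $x^{+}$ --- which is exactly the conclusion (``$x^{+}$ is fixed by no power of $y$'') you want the ping-pong to output. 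Merely extracting $g,h$ with $g^{+}\neq h^{+}$ does not supply this hypothesis: a non-elliptic \se\ can fix many boundary points besides its direction (Example~\ref{ex_seMayFixArbitrarilyManyBP} fixes $2n+1$ of them), and with only \emph{two} elements the mutual-fixing configuration --- each fixing the other's direction --- is perfectly consistent with the classification results; by Lemma~\ref{lem_2BPfixed-a} and Corollary~\ref{cor_nonEllipticFixFinite2} it merely forces the elements to be hyperbolic, it does not produce a contradiction. Your closing sentence, which appeals to Lemmas~\ref{lem_nonEllipticFixFinite1} and~\ref{lem_nonEllipticFixFinite2} ``to control'' the extra fixed points, names the right tools but contains no argument; that control is precisely the missing step.

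The paper fills this hole with the claim labelled~(\ref{itm_numberOfLimits_1}), and the mechanism is a \emph{third} element: take $f,g,h$ non-elliptic with pairwise distinct directions (available since $|\cD(M)|>2$ by Proposition~\ref{prop_DirectionsAlmostDenseInLimit}). If some $f^n$ fixes $g^{+}$, Lemma~\ref{lem_2BPfixed-a} yields $\mu$ with $d_h(f^{+},\mu)=0$ containing anti-rays; if additionally some $g^m$ fixes $f^{+}$, then Corollary~\ref{cor_nonEllipticFixFinite2} and Lemma~\ref{lem_PowerHyperPara} make every $g^k$ hyperbolic, so these $g^k$ cannot fix the third direction $h^{+}$. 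In every case one obtains a pair with the \emph{one-sided} non-fixing property, and one-sided suffices: by your own first-paragraph observation the orbit $\{g^i(f^{+})\mid i\in\N\}$ is then infinite by injectivity, and each $g^i(f^{+})$ lies in $\cL(M)$ because $g^if^j(o)$ converges to it --- no ping-pong machinery (and no two-sided disjoint shadows) is needed at all. One further slip in your reduction paragraph: $d_h$ is only a pseudo-\hm, so from $d_h(x_k,\mu)\to 0$ and $d_h(x_k,\eta)\to 0$ you cannot triangulate to $d_h(\mu,\eta)=0$; since both points contain rays, conclude $\mu=\eta$ instead via Lemma~\ref{lem_disjointBNeighbourhoods}, whose proof handles exactly this asymmetric situation.
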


\begin{proof}
Let us assume that $|\cL(M)|>2$.
Then we also have $|\cD(M)|>2$ by Proposition~\ref{prop_DirectionsAlmostDenseInLimit}.
Let us show the following.
\begin{txteq}\label{itm_numberOfLimits_1}
There exist non-elliptic \se s $f,g$ with $g^n(f^+)\neq f^+$ for all $n\in\N$ with $n\geq 1$.
\end{txteq}
Let $f,g,h\in M$ be non-elliptic such that all of their directions are distinct.
If some $f^n$ with $n\in\N$ and $n\geq 1$ fixes $g^+$, then there exists $\eta\in\rand D$ with $d(f^+,\eta)=0$ that contains anti-rays by Lemma~\ref{lem_2BPfixed-a}.
If additionally some $g^m$ with $m\in\N$ and $m\geq 1$ fixes $f^+$, then $g$ or $g^m$ is hyperbolic by Corollary~\ref{cor_nonEllipticFixFinite2} and so are all $g^k$ for $k\in\N$ with $k\geq 1$ by Lemma~\ref{lem_PowerHyperPara}.
So none of these $g^k$ fix~$h^+$.
This finishes the proof of~(\ref{itm_numberOfLimits_1}).

Let us prove that $g^i(f^+)\neq g^j(f^+)$ for all distinct $i,j\in\N$.
Suppose that there are $i<j$ with $g^i(f^+)=g^j(f^+)$.
Set $m:=j-i$.
Then $g^m$ fixes $g^i(f^+)$.
If $i>0$, then Proposition~\ref{prop_SeInduceInjection} implies that $g^m$ must fix $g^{i-1}(f^+)$, too.
Inductively, we obtain $g^m(f^+)=f^+$, a contradiction to~(\ref{itm_numberOfLimits_1}).
Thus, the set $\{g^i(f^+)\mid i\in\N\}$ is infinite.

Since the sequence $(g^if^j(o))_{j\in\N}$ converges to $g^i(f^+)$ for every $i\in\N$, we obtain that $\cL(M)$ is infinite.
So $\cD(M)$ is infinite, too, by Proposition~\ref{prop_DirectionsAlmostDenseInLimit}.
\end{proof}

Let us now prove our fixed point theorem: either the monoid of \se s fixes a finite vertex set or a unique limit point or it has exactly two limit points or the non-elliptic \se s satisfy a certain condition which is enough so prove that the monoid contains a free submonoid as we will see later (Theorem~\ref{thm_freeSubmonoid}).

\begin{thm}\label{thm_fpa}
Let $D$ be a rooted hyperbolic digraph with constant out-degree and bounded in-degree and let $M$ be a monoid of \se s of~$D$.
Then one of the following holds.
\begin{enumerate}[\rm (i)]
\item\label{itm_fpa_1} $M$ fixes a finite set of vertices.
\item\label{itm_fpa_2} $M$ fixes a unique element of $\cL(M)$.
\item\label{itm_fpa_3} $\cL(M)$ consists of exactly two elements.
\item\label{itm_fpa_4} $M$ contains two elements that are not elliptic and that do not fix the direction of the other.
\end{enumerate}
\end{thm}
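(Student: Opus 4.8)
The plan is to split according to the cardinality of $\cL(M)$, which by Theorem~\ref{thm_numberOfLimits} is either $0$, $1$, $2$ or infinite, and to show that these four cases yield~(\ref{itm_fpa_1}),~(\ref{itm_fpa_2}),~(\ref{itm_fpa_3}) and~(\ref{itm_fpa_4}), respectively. The case $|\cL(M)|=2$ is immediately~(\ref{itm_fpa_3}), so the work lies in the other three cases. A preliminary observation used throughout is that $\cL(M)$ is invariant under~$M$: if $(h_i(o))_{i\in\N}$ converges to~$\eta\in\cL(M)$ and $g\in M$, then $gh_i\in M$ and, since $\hat g$ is continuous by Proposition~\ref{prop_SeInduceInjection}, the sequence $(gh_i(o))_{i\in\N}$ converges to the image of~$\eta$ under~$\hat g$; as $g$ is a $(1,0)$-\qi\ embedding it maps a geodesic ray in~$\eta$ to a geodesic ray, so that image contains rays and hence lies in~$\cL(M)$.

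If $|\cL(M)|=1$, this invariance shows that $M$ fixes the unique limit point, which is~(\ref{itm_fpa_2}). If $|\cL(M)|=0$, then in particular $\cD(M)=\es$, so every element of~$M$ is elliptic and hence, by Corollary~\ref{cor_ellipticIsAuto}, an automorphism. I would then show that the orbit $Mo$ is finite. If it were infinite, then since every out-ball $\cB^+_r(o)$ is finite (the out-degree is constant), there are infinitely many $h_i\in M$ with $d(o,h_i(o))\to\infty$; choosing $o$-$h_i(o)$ geodesics and thinning out along shared initial edges (possible as the out-degrees are finite) produces a geodesic ray~$R$ from~$o$ that is an initial segment of longer and longer such geodesics. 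Its hyperbolic boundary point contains a ray and, since $d_h$ is visual, is a limit of the corresponding subsequence of~$(h_i(o))_{i\in\N}$, contradicting $\cL(M)=\es$. Hence $Mo$ is finite, and being $M$-invariant it is a finite vertex set fixed setwise by~$M$, giving~(\ref{itm_fpa_1}).

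The main case is $|\cL(M)|$ infinite, where I would prove~(\ref{itm_fpa_4}) by a counting argument. By the proof of Theorem~\ref{thm_numberOfLimits}, $\cD(M)$ is then infinite as well. Combining Proposition~\ref{prop_bpOfDist0} with Lemmas~\ref{lem_nonEllipticFixFinite1} and~\ref{lem_nonEllipticFixFinite2} yields a single constant~$C$, depending only on~$D$, that bounds the number of hyperbolic boundary points fixed by \emph{any} non-elliptic \se: those containing rays number at most $K+1$ and those containing anti-rays at most $K+1$, where $K$ is the uniform bound of Proposition~\ref{prop_bpOfDist0}, so $C:=2K+2$ works. Suppose for contradiction that~(\ref{itm_fpa_4}) fails, i.e.\ for every pair of non-elliptic $a,b\in M$ either $a$ fixes~$b^+$ or $b$ fixes~$a^+$. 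Pick $n:=2C+2$ distinct directions $\zeta_1,\dots,\zeta_n\in\cD(M)$ and for each~$i$ a non-elliptic $g_i\in M$ with $g_i^+=\zeta_i$. The failure of~(\ref{itm_fpa_4}) says that for every pair $i\neq j$ at least one of ``$g_i$ fixes~$\zeta_j$'' and ``$g_j$ fixes~$\zeta_i$'' holds, so the number of ordered pairs $(i,j)$ with $g_i$ fixing~$\zeta_j$ is at least $\binom{n}{2}$. On the other hand each $g_i$ fixes at most~$C$ boundary points and the $\zeta_j$ are distinct, so this number is at most~$nC$. Hence $\binom{n}{2}\le nC$, that is $n\le 2C+1$, contradicting $n=2C+2$; thus~(\ref{itm_fpa_4}) holds.

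The step I expect to require the most care is $|\cL(M)|=0$: one must argue that an infinite orbit of forward-reachable vertices really produces a \emph{ray}-containing boundary point in~$\cL(M)$, rather than only an anti-ray point that would fall outside~$\cL(M)$; this is exactly why I build~$R$ directly from $o$-$h_i(o)$ geodesics and invoke the visuality of~$d_h$, instead of merely extracting a subsequential limit. The infinite case, by contrast, reduces cleanly to the uniform fixed-point bound~$C$ and elementary counting once Theorem~\ref{thm_numberOfLimits} supplies infinitely many directions, and the cases $|\cL(M)|\in\{1,2\}$ are essentially immediate from invariance.
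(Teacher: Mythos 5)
Your proposal is correct and uses the same global architecture as the paper -- a case split on $|\cL(M)|$ via Theorem~\ref{thm_numberOfLimits}, with $|\cL(M)|\in\{1,2\}$ immediate -- but in the main case $|\cL(M)|=\infty$ your argument is genuinely different from the paper's. The paper assumes~(\ref{itm_fpa_4}) fails and derives a contradiction from just \emph{four} non-elliptic elements with pairwise distinct directions: it first proves, using the ``furthermore'' part of Lemma~\ref{lem_2BPfixed-a} and Corollary~\ref{cor_nonEllipticFixFinite2}, that no two of three such elements can fix the direction of the third (the point being that fixing a second ray-containing boundary point forces an anti-ray point at $d_h$-distance~$0$ from the direction, and a further coincidence then forces one of the elements to be hyperbolic, hence to fix only its own two boundary points), and then a short combinatorial analysis of four elements finishes. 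You instead extract a \emph{uniform} bound $C=2K+2$ on the number of boundary points fixed by any non-elliptic \se, by combining the bound $K$ of Proposition~\ref{prop_bpOfDist0} (which depends only on~$D$) with Lemmas~\ref{lem_nonEllipticFixFinite1} and~\ref{lem_nonEllipticFixFinite2} and the fact that every boundary point contains a ray or an anti-ray; this uniformity is indeed available, and your double count $\binom{n}{2}\leq nC$ with $n=2C+2$ distinct directions (which exist since $\cD(M)$ is infinite) is sound. Your route is more elementary and quantitative and never invokes the hyperbolic/parabolic dichotomy; the paper's route needs only four witnesses and exposes more of the fixed-point structure. Both are valid.

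One step deserves more justification than you give it: in the case $\cL(M)=\es$, the convergence of the subsequence $(h_{i_j}(o))$ to the boundary point $\mu$ of your constructed ray~$R$ is not a formal consequence of visuality. Visuality only converts the estimate $\rho(h_{i_j}(o),\mu)\to\infty$ into $d_h(h_{i_j}(o),\mu)\to 0$; the actual content is that geodesics from $h_{i_j}(o)$ to far-away vertices of~$R$ avoid large balls around~$o$ (and that such geodesics exist at all, which uses that the elliptic $h_i$ are automorphisms by Corollary~\ref{cor_ellipticIsAuto}, so that $h_i(o)$ is again a root -- a point you invoke but do not connect to this use). This estimate follows from a thin-triangle argument on triangles with end vertices $o$, $h_{i_j}(o)$ and vertices of~$R$, in the spirit of Proposition~\ref{prop_forCorOf14.1}. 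The paper organizes this case slightly differently: it first produces a limit point~$\eta$ of the orbit by sequential compactness and then shows $d_h(\mu,\eta)=0=d_h(\eta,\mu)$ for the constructed ray-point~$\mu$, the second equality resting on the same kind of geodesic estimate, so your variant is repairable with exactly the tools the paper itself uses and I would not count it as a genuine gap -- but as written, ``since $d_h$ is visual'' does not carry the load you assign to it.
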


\begin{proof}
Obviously, $|\cL(M)|=1$ implies (\ref{itm_fpa_2}) and $|\cL(M)|=2$ implies (\ref{itm_fpa_3}).
Let us assume that $|\cL(M)|=0$ holds.
Then every element of~$M$ must be elliptic.
First note that every element of~$M$ is an automorphism by Corollary~\ref{cor_ellipticIsAuto}, so we have $d(g(o),v)<\infty$ for all $v\in V(D)$ as $d(o,v)<\infty$ for all $v\in V(D)$.
If $\{g(o)\mid g\in M\}$ is not finite, then sequential compactness of $D\cup \rand D$ implies that an infinite sequence $(g_i(o))_{i\in\N}$ in $\{g(o)\mid g\in M\}$ converges to some $\eta\in\rand D$.
Let $R$ be the ray starting at~$o$ that is defined by $o$-$g_i(o)$ geodesics.
Then $R$ lies in a hyperbolic boundary point $\mu$ with $d_h(\mu,\eta)=0$.
Since we find $g_i(o)-R$ geodesics of arbitrary distance from~$o$, we also have $d_h(\eta,\mu)=0$ as $d_h$ is visual.
Thus, we have $\eta\in\cL(M)$ contrary to our assumption that $\cL(M)$ is empty.

Let us now assume that $|\cL(M)|>2$ holds and hence $|\cD(M)|>2$ by Proposition~\ref{prop_DirectionsAlmostDenseInLimit}.
Then both cardinalities are infinite by Theorem~\ref{thm_numberOfLimits}.
If $M$ fixes some finite set of vertices, then $\cL(M)$ must be empty.
Thus, $M$ does not fix any finite set of vertices.
Let us suppose that (\ref{itm_fpa_4}) does not hold, i.\,e.\ that there are no two non-elliptic elements each of which fixes the direction of the other.

Let us show the following.
\begin{txteq}\label{eq_fpa_1}
If $g_1,g_2,g_3$ are non-elliptic elements of~$M$ with pairwise distinct directions, then there are no two of them that fix the direction of the third one.
\end{txteq}
Let us suppose that $g_1$ and $g_3$ fix the direction of~$g_2$.
Then there are $g_1^-, g_3^-\in\rand D$ with $d(g_1^+,g_1^-)=0$ and $d(g_3^+,g_3^-)=0$ such that $g_1^-$ and $g_3^-$ contain anti-rays by Lemma~\ref{lem_2BPfixed-a}.
Since either $g_1$ fixes $g_3^+$ or $g_3$ fixes $g_1^+$, Corollary~\ref{cor_nonEllipticFixFinite2} implies that either $g_1$ or $g_3$ is hyperbolic and thus cannot fix~$g_2^+$, which is a contradiction.
This shows~(\ref{eq_fpa_1}).

Let $g_1,g_2,g_3,g_4\in M$ be non-elliptic with pairwise distinct directions.
We may assume that $g_1$ fixes~$g_2^+$.
Then (\ref{eq_fpa_1}) implies that neither $g_3$ nor~$g_4$ fixes~$g_2^+$, so $g_2$ fixes~$g_3^+$ and~$g_4^+$.
We have that either $g_3$ fixes $g_4^+$ or $g_4$ fixes~$g_3^+$.
But in both cases, we obtain a contradiction to~(\ref{eq_fpa_1}) since in the first case $g_2$ and $g_3$ fix $g_4^+$ while in the second case $g_2$ and $g_4$ fix~$g_3^+$.
This contradiction shows that (\ref{itm_fpa_4}) holds in our situation.
\end{proof}

Contrary to the situation of automorphism groups on (hyperbolic) graphs, the statements of our fixed point theorem are not mutually exclusive.
The reason for that is that we only talk about the directions of non-elliptic elements and not about all of their fixed points.
For automorphisms, the last statement in Theorem~\ref{thm_fpa} can be stated as follows: `There are two hyperbolic automorphisms that freely generate a free subgroup.'
While we also obtain a statement about free submonoid (Theorem~\ref{thm_freeSubmonoid}), the mutual exclusiveness is out of reach as the following example shows.

\begin{ex}
Let $D$ be digraph that is obtained from a tree by placing each edge by two inversely oriented edges.
This is a hyperbolic digraph.
Let $\eta$ be any hyperbolic boundary point and $M$ be the monoid of \se s that fix~$\eta$.
Obviously, $M$ satisfies (\ref{itm_fpa_2}) from Theorem~\ref{thm_fpa}.
Let $\mu,\nu\in\rand D\sm\{\eta\}$ be distinct.
Then there are non-elliptic \se s with $\mu$ and $\nu$ as directions.
Obviously, they have $\eta$ and $\mu$ or $\eta$ and $\nu$ as the only hyperbolic boundary points fixed by them, which shows that this example also satisfies (\ref{itm_fpa_4}) from Theorem~\ref{thm_fpa}.
\end{ex}

Before we will prove that (\ref{itm_fpa_4}) of Theorem~\ref{thm_fpa} implies that the monoid contains a free submonoid, we need a variant of the ping-pong-lemma for semigroups that guarantees us to find a free monoid.

\begin{lem}\label{lem_PingPong}
Let $M$ be a cancellative semigroup of \se s of a digraph~$D$.
Let $m_1, m_2 \in M$ and subsets $U, V \sub V(D)$ such that
\[
U \cap V = \es,\qquad m_1(U \cup V) \sub U, \qquad m_2(U \cup V) \sub V.
\]
Then, $m_1$ and $m_2$ freely generate a free semigroup.
\end{lem}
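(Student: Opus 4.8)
The plan is to show that the canonical homomorphism $\phi$ from the free semigroup on two generators into $M$, sending the two generators to $m_1$ and $m_2$, is injective; equivalently, that whenever two nonempty words $w=s_1s_2\cdots s_n$ and $w'=t_1t_2\cdots t_k$ in the letters $s_i,t_j\in\{m_1,m_2\}$ satisfy $\phi(w)=\phi(w')$ in~$M$, then $n=k$ and $s_i=t_i$ for all~$i$. I read the product of $M$ as composition of maps, so $\phi(w)=s_1\circ s_2\circ\cdots\circ s_n$ acts by applying its last letter first. Write $W:=U\cup V$ and, for a letter $s$, let $U_s$ denote $U$ if $s=m_1$ and $V$ if $s=m_2$; thus the hypotheses read $s(W)\sub U_s\sub W$ and $U_{m_1}\cap U_{m_2}=\es$. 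I may assume $U,V\neq\es$: if $V=\es$ then injectivity of $m_2$ forces $m_2(U)=\es$ and hence $U=\es$, and then there is nothing to prove.

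The first step is the ping-pong core. From $s(W)\sub U_s\sub W$ a one-line induction on word length gives $\phi(w)(W)\sub U_{s_1}$, so the image of the whole block $W$ under $\phi(w)$ lands in the piece named by the \emph{first} letter of~$w$. Since $W\neq\es$ and $U_{m_1},U_{m_2}$ are disjoint, this reads off $s_1$ from the map $\phi(w)$ alone: from $\es\neq\phi(w)(W)\sub U_{s_1}\cap U_{t_1}$ I conclude $s_1=t_1$.

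The second step is an induction on $n+k$, where I may assume $n\le k$. Having matched $s_1=t_1$, I split into two cases. If $n\ge 2$, then also $k\ge 2$, and writing $\phi(w)=s_1\circ A$ and $\phi(w')=s_1\circ B$ with $A,B$ the images of the length-$(n-1)$ and length-$(k-1)$ tails, injectivity of the \se\ $s_1$ (i.e.\ left cancellation) gives $A=B$; the tails are nonempty words of smaller total length, so the induction hypothesis yields equality of the tails and hence $w=w'$. If $n=1$, I must rule out $k>1$: otherwise $s_1=s_1\circ r$ with $r$ the image of the nonempty tail $t_2\cdots t_k$, and injectivity of $s_1$ forces $r$ to be the identity on $V(D)$; but $r(W)\sub U_{t_2}\subsetneq W$ (the complementary block is nonempty), so $r$ moves some vertex of~$W$, a contradiction. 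Hence $k=1$ and $w=w'$, completing the induction.

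Finally, I would note that the only algebraic input is left cancellation, which for self-embeddings is automatic from injectivity, so the cancellativity hypothesis merely phrases the conclusion in semigroup terms. The step I expect to be the main obstacle is exactly the length-mismatch case $n=1<k$: in a group one cancels by an inverse, but here there is neither an identity nor inverses to cancel against, and it is precisely the injectivity of \se s together with the strict containment $r(W)\sub U_{t_2}\subsetneq W$ that eliminates it.
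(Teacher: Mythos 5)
Your proof is correct and takes essentially the same route as the paper's: read off the first letter of a word from the fact that its evaluation maps $U\cup V$ into $U$ or into $V$, cancel that letter using injectivity, and induct on word length; your explicit treatment of the length-mismatch case $n=1<k$ spells out what the paper compresses into its remark that an empty word can only coincide with an empty word, and you rightly observe that injectivity of \se s makes the cancellativity hypothesis redundant. One caveat: in the degenerate case $U=V=\es$ the conclusion genuinely fails (the hypotheses are then vacuous for arbitrary $m_1,m_2$, e.g.\ $m_1=m_2$), so instead of ``there is nothing to prove'' you should say that the lemma implicitly assumes $U\cup V\neq\es$ --- an assumption the paper's own proof also uses silently, and which holds in its application.
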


\begin{proof}
Let $w$ and $w'$ be words over $\{m_1,m_2\}$ such that their evaluated \se s of~$D$ coincide.
If $w$ is empty, then this can only happen by the assumptions if $w'$ is empty, too.
Let $a,a'$ be the first letters of $w, w'$, respectively, and let $v,v'$ be words such that $av$ is the word $w$ and $a'v'$ is the word~$w'$.
By the assumptions, we must have $a=a'$.
Since $M$ is cancellative, this implies that the elements of~$M$ corresponding to the words $v$ and~$v'$ coincide in their evaluated \se s of~$D$.
By induction on the length of~$w$, the assumption follows.
\end{proof}

\begin{thm}\label{thm_freeSubmonoid}
Let $D$ be a rooted hyperbolic digraph with constant out-degree and bounded in-degree and let $M$ be a monoid of \se s of~$D$.
If $|\cL(M)|>2$ and $g,h\in M$ are non-elliptic \se s that do not fix the direction of the other, then there are $n,m\in \N$ such that $g^n$ and $h^m$ freely generate a free submonoid of~$M$.
\end{thm}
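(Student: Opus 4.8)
The plan is to apply the ping-pong lemma (Lemma~\ref{lem_PingPong}) with $m_1=g^n$, $m_2=h^m$ and with $U,V$ taken to be disjoint cone-neighbourhoods of the two directions $g^+$ and $h^+$. Since $g^+\neq h^+$ (otherwise $g$ would fix $h^+$) and both contain rays, Lemma~\ref{lem_disjointBNeighbourhoods} supplies disjoint open neighbourhoods, inside which I choose finite sets $S_U,S_V$ on geodesic rays in $g^+,h^+$ and form the cones $U^+,V^+$ exactly as in Lemma~\ref{lem_projectivity}; a thin-triangle argument (two $o$-$x$ geodesics fellow-travel, hence cannot pass through the two far-apart cone mouths at once) gives $U^+\cap V^+=\es$. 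To run ping-pong I must verify the four inclusions $g^n(U^+)\sub U^+$, $g^n(V^+)\sub U^+$, $h^m(V^+)\sub V^+$, $h^m(U^+)\sub V^+$ for one common pair of large exponents.

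The crux, and the point where the hypothesis that neither \se\ fixes the other's direction is really used, is the preliminary claim that \emph{no power of $g$ fixes $h^+$ and no power of $h$ fixes $g^+$}; without it the two cross-inclusions fail, since $g$ would merely permute the finite $g$-orbit of $h^+$ instead of driving it towards $g^+$. I would argue as follows. Suppose $g^i(h^+)=h^+$ for some minimal $i\ge 2$. Applying Lemma~\ref{lem_nonEllipticFixFinite1} to the non-elliptic \se\ $g^i$, which fixes the ray-containing point $h^+\neq (g^i)^+=g^+$, yields a point $\mu$ with $d_h(g^+,\mu)=0$ that contains anti-rays. On the other hand, Lemma~\ref{lem_PowerFixedDirectionImpliesHyper} shows that $h^+$ can have no anti-ray companion, for otherwise $g$ itself would already fix $h^+$. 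A symmetric use of the same two lemmas rules out that any power of $h$ fixes $g^+$, so by Lemma~\ref{lem_projectivity} the \se\ $h$ displays north–south dynamics towards $h^+$; pushing the anti-ray point $\mu$ by the $h^k$ and passing to a convergent subsequence (sequential compactness), Proposition~\ref{prop_hyperDi14.1} then produces an anti-ray companion of $h^+$ — the desired contradiction. Exchanging $g$ and $h$ gives the symmetric statement.

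Granted the claim, Lemma~\ref{lem_projectivity} applies to the pairs $(g,h^+)$ and $(h,g^+)$ and yields exponents with $g^{n}(V^+)\sub U^+$ and $h^{m}(U^+)\sub V^+$, the two cross-inclusions. The two self-inclusions $g^{n}(U^+)\sub U^+$ and $h^{m}(V^+)\sub V^+$ I would obtain directly from the fact that $g^+$ is the attracting direction of $g$: choosing $S_U$ on a geodesic ray $R\in g^+$ through which, for large $n$, every $o$-$g^n(o)$ geodesic passes, the concatenation of such a geodesic with $g^n$ of an $o$-$x$ geodesic ($x\in U^+$) is a \qg\ whose only backtracking is bounded because $x$ lies in the $g^+$-cone, so by Lemma~\ref{lem_HyperDi_7.3} an $o$-$g^n(x)$ geodesic fellow-travels it and therefore still meets $\cB^+_\kappa(S_U)$, giving $g^n(x)\in U^+$. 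Each inclusion persists after replacing an exponent by a larger multiple (iterating a cross-inclusion through the corresponding self-inclusion), so I can pass to a common pair $n,m$ for which all four hold simultaneously.

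With the four inclusions in hand, Lemma~\ref{lem_PingPong} applied to the cancellative monoid $M$ with $m_1=g^n$, $m_2=h^m$, $U=U^+$, $V=V^+$ immediately gives that $g^n$ and $h^m$ freely generate a free submonoid. I expect the main obstacle to be the preliminary claim of the second paragraph: excluding periodic but non-fixed boundary points for the possibly \emph{parabolic} \se s $g$ and $h$. This is precisely what forces the anti-ray-companion machinery of Lemmas~\ref{lem_nonEllipticFixFinite1}, \ref{lem_2BPfixed} and~\ref{lem_PowerFixedDirectionImpliesHyper}, rather than the easy argument available when $g$ and $h$ are hyperbolic (where Corollary~\ref{cor_nonEllipticFixFinite} already pins down all fixed boundary points).
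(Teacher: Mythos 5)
Your skeleton coincides with the paper's: disjoint neighbourhoods of $g^+$ and $h^+$ (the paper obtains them from Lemmas~\ref{lem_nonEllipticFixFinite1}, \ref{lem_nonEllipticFixFinite2} and~\ref{lem_disjointBNeighbourhoods}, additionally arranging that $U$ avoids all boundary points fixed by~$h$ and $V$ all those fixed by~$g$), the cones $S_U^+$, $U^+$, $S_V^+$, $V^+$ of Lemma~\ref{lem_projectivity}, the two cross-inclusions $g^m(V^+)\sub U^+$ and $h^n(U^+)\sub V^+$ from that lemma, a thin-triangle argument for $U^+\cap V^+=\es$ (this is exactly the paper's claim~(\ref{eq_freeSubmonoid_1})), and finally Lemma~\ref{lem_PingPong}. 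Your explicit attention to the two self-inclusions $g^n(U^+)\sub U^+$ and $h^m(V^+)\sub V^+$, which Lemma~\ref{lem_PingPong} also needs and which the paper leaves implicit in its ``we simply apply Lemma~\ref{lem_PingPong}'', is a sensible supplement, though your quasi-geodesic concatenation sketch still owes a uniform bound on the backtracking (a Gromov-product-type lower bound for $d(o,g^n(x))$ uniform in $x\in U^+$) before Lemma~\ref{lem_HyperDi_7.3} can be invoked with constants independent of $n$ and~$x$.

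The genuine gap is in the second half of your preliminary claim. After correctly ruling out an anti-ray companion of~$h^+$ via Lemma~\ref{lem_PowerFixedDirectionImpliesHyper}, you push the point $\mu$ (with $d_h(g^+,\mu)=0$, containing anti-rays) by the iterates $h^k$ and ``pass to a convergent subsequence by sequential compactness''. But sequential compactness in this paper is a statement about sequences of \emph{vertices} with pairwise finite distances, not about sequences of boundary points; no result controls the dynamics of $\hat{h}$ on boundary points containing only anti-rays -- the whole cone machinery of Lemma~\ref{lem_projectivity} is built from geodesic rays starting at~$o$, which such a point need not contain, and invoking Lemma~\ref{lem_projectivity} for~$\mu$ would in any case require knowing that no power of~$h$ fixes~$\mu$, which is the very kind of statement you are trying to establish. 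Even granting a limit $\nu$ of $(h^k(\mu))_{k\in\N}$, nothing identifies $\nu$ as an anti-ray companion of~$h^+$: Proposition~\ref{prop_hyperDi14.1} only analyses pairs already known to be at pseudo-distance~$0$; it does not produce such pairs. So the claim that no power of~$g$ fixes $h^+$ (needed because Lemma~\ref{lem_projectivity} assumes $\eta$ is fixed by no $g^i$) remains unproved in your write-up. It is worth noting that the paper itself never proves such a claim either -- it applies Lemma~\ref{lem_projectivity} to the pairs $(g,h^+)$ and $(h,g^+)$ directly under the theorem's first-power hypothesis -- so your diagnosis correctly isolates a hypothesis-matching step the paper passes over silently; but the repair you propose does not close it.
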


\begin{proof}
Let $\varphi\colon\R\to\R$ be a function such that $D$ satisfies (\ref{itm_Bounded1}) and (\ref{itm_Bounded2}) with respect to~$\varphi$.
By Lemmas~\ref{lem_nonEllipticFixFinite1} and~\ref{lem_nonEllipticFixFinite2}, there are only finitely many hyperbolic boundary points fixed by~$g$ and by~$h$.
So by Lemma~\ref{lem_disjointBNeighbourhoods}, there are disjoint open neighbourhoods $U$ and~$V$ of $g^+$ and~$h^+$, respectively, such that $U$ contains no hyperbolic boundary point fixed by~$h$ and $V$ contains no hyperbolic boundary point fixed by~$g$.

Applying Lemma~\ref{lem_projectivity} twice, once for $g$ and $g^+$ and once for $h$ and~$h^+$, we may assume that there are $m,n\in\N$ such that with the notations $S_U$ and $U^+$ as well as $S_V$ and $V^+$ from Lemma~\ref{lem_projectivity} we have $g^m(V^+)\sub U^+$ and $h^n(U^+)\sub V^+$.
We set $S_U^+:=\cB^+_\kappa(S_U)$ and $S_V^+:=\cB^+_\kappa(S_V)$ with
\[
\kappa=(12\delta+4\delta \varphi(\delta+1)+1)\varphi(\delta+1).
\]
Note that all but finitely many $g^i(o)$ lie in~$U$ and all but finitely many $h^i(o)$ lie in~$V$.
Let us show the following.
\begin{txteq}\label{eq_freeSubmonoid_1}
We may assume that $U^+$ and $V^+$ are disjoint.
\end{txteq}

If there exists $x\in U^+\cap V^+$, then let $P_1$ be an $o$-$x$ geodesic that meets $S_U^+$ and let $P_2$ be an $o$-$x$ geodesic that meets $S_V^+$.
Let $c\in S_U^+$ be on $P_1$ and let $a\in S_U$ with $d(a,c)\leq\kappa$.
Similarly, let $d\in S_V^+$ be on $P_2$ and let $b\in S_V$ with $d(b,d)\leq\kappa$.
Let $P_3$ be an $o$-$a$ geodesic, $P_4$ an $a$-$x$ geodesic, $P_5$ an $o$-$b$ geodesic and $P_6$ a $b$-$x$ geodesic.
Let $y_1$ be on~$P_1$ with
\begin{equation}\label{eq_freeSubmonoid_2}
d(o,y_1)<\max\{d(o,c),d(o,d)\}-(\kappa+2\delta+\lambda)
\end{equation}
with $\lambda:=6\delta+2\delta \varphi(\delta+1)$.
Since $y_1\in B^+_\delta(P_3)\cup B^-_\delta(P_4)$, there must be $y$ on~$P_3$ with $d(y,y_1)\leq\delta$, since $P_4$ is bounded by $d(c,x)+\kappa$ and we would obtain a contradiction to $P_1$ being geodesic otherwise.
Considering a geodesic triangle with $o$, $x$ and~$o$ as end vertices, there exists $y_2$ on~$P_2$ with $d(y_1,y_2)\leq\delta$.
Applying Lemma~\ref{lem_hyperDi3.4}, there exists $z$ on $P_5\cup P_6$ with $d(y_2,z)\leq \lambda$.
If $z$ lies on~$P_6$, then we have
\[
d(y_2,x)\leq\lambda+d(b,x)\leq\kappa+\lambda+d(d,x),
\]
which is a contradiction to~(\ref{eq_freeSubmonoid_2}) and to the fact that $P_2$ is geodesic.
Thus, $z$ lies on~$P_5$.
Now Proposition~\ref{prop_hyperDi3.3} implies that $y$ and~$z$ are far away from~$o$ in terms of $d(o,y_1)$: their distance from~$o$ is at least $d(o,y_1)/\varphi(\delta+1)-\delta-\lambda$ because of
\[
d(o,y_1)\leq(d(o,y)+d(y,y_1))\varphi(\delta+1)
\]
in the situation for~$y$ and similarly for~$z$.
Since we may have chosen $U$ and~$V$ of arbitrarily large distance from~$o$ according to Lemma~\ref{lem_projectivity}, this implies that geodesic rays from~$o$ that lie in~$g^+$ and in~$h^+$ must be equivalent.
Thus, we have $g^+=h^+$, a contradiction that shows that there exists $U$ and~$V$ such that $U^+$ and~$V^+$ are disjoint, which finishes the proof of~(\ref{eq_freeSubmonoid_1}).

In order to obtain the assertion, we simply apply Lemma~\ref{lem_PingPong}.
\end{proof}

\section{Hyperbolic monoids}\label{sec_monoids}

Let $M$ be a finitely generated semigroup.
We are considering the \emph{right} Cayley digraphs, i.\,e.\ for generators $s$ and $m\in M$, we have directed edges from $m$ to~$ms$.
Note that every element of~$M$ induces a \se\ of that Cayley digraph by left-multiplication.
If $M$ is right cancellative, then each of its Cayley digraphs with respect to finite generating sets satisfies (\ref{itm_Bounded1}) and~(\ref{itm_Bounded2}).

Our first result will be that infinite right cancellative finitely generated hyperbolic\linebreak monoids contain elements of infinite order.
We will see later (Theorem~\ref{thm_infOrderQI}) that the embedding of the submonoid generated by such an element of infinite order defines a \qi\ embedding into the monoid.

\begin{thm}\label{thm_InfOrderElement}
Let $M$ be an infinite cancellative finitely generated hyperbolic\linebreak monoid.
Then it contains an element of infinite order.
\end{thm}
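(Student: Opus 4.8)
The plan is to work in the right Cayley digraph $D$ of $M$ with respect to a finite generating set $S$, with root $o=1$. By the remarks at the start of Section~\ref{sec_monoids}, $D$ is rooted and satisfies (\ref{itm_Bounded1}) and~(\ref{itm_Bounded2}); left cancellation makes the out-degree constantly equal to $|S|$ and right cancellation bounds the in-degree by $|S|$, so $D$ meets all the hypotheses of Section~\ref{sec_se}, and the left multiplications $L_m\colon x\mapsto mx$ form a monoid $\mathcal M$ of \se s of~$D$. The first step is to translate the statement into the language of \se s: an element $m$ has infinite order if and only if $\{m^i\mid i\in\N\}=\{L_m^i(o)\mid i\in\N\}$ is infinite, and this holds exactly when $L_m$ is not elliptic. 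Indeed, if $m$ has finite order then $m$ is invertible and $L_m$ generates a finite cyclic group, so all its orbits are finite and it fixes the orbit of~$o$ setwise; conversely, if $L_m$ is elliptic then its orbits are finite (by the remark following the definition of elliptic), so $L_m^k(o)=o$ for some $k\geq 1$, i.e.\ $m^k=1$. Thus it suffices to produce a single non-elliptic $L_m$, and I argue by contradiction, assuming that every $L_m$ is elliptic.

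The key step is a dichotomy according to whether $M$ is a group. Suppose first that some $m\in M$ is not right-invertible, i.e.\ $1\notin mM$. Then $U:=mM$ satisfies $o=1\notin U$, $m=m\cdot 1\in U$, and $L_m(U\cup\{o\})=m^2M\cup\{m\}\sub mM=U$, so Lemma~\ref{lem_translationIsHyper} shows that $L_m$ is not elliptic, contradicting our assumption. Hence every element of~$M$ is right-invertible; since $M$ is cancellative, $mx=1$ forces $xm=1$, so every element is invertible and $M$ is a group, acting regularly on $V(D)=M$. By Corollary~\ref{cor_ellipticIsAuto} each $L_m$ is then an automorphism, and by the equivalence of the first step every element has finite order, so $M$ is an infinite, finitely generated, torsion group.

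It remains to rule out this last situation, which is the digraph analogue of the classical fact that an infinite hyperbolic group contains an element of infinite order. In the group case all pairwise distances between vertices are finite (a positive word over~$S$ represents $m^{-1}m'$), and $\{L_m(o)\mid m\in M\}=V(D)$ is infinite with finite balls, so sequential compactness yields a sequence converging to a boundary point containing rays; thus $\cL(\mathcal M)\neq\es$. Since every $L_m$ is elliptic we have $\cD(\mathcal M)=\es$, so Proposition~\ref{prop_DirectionsAlmostDenseInLimit} forbids $|\cL(\mathcal M)|\geq 2$, whence $|\cL(\mathcal M)|=1$, say $\cL(\mathcal M)=\{\eta\}$. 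As each $L_m$ induces an injective, ray-preserving self-map of~$\rand D$ (Proposition~\ref{prop_SeInduceInjection}) and $\cL(\mathcal M)$ is $\mathcal M$-invariant, the unique limit point $\eta$ is fixed by all of~$\mathcal M$; in particular every geodesic ray from~$o$ lies in~$\eta$.

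I expect the main obstacle to be extracting a contradiction from this final configuration: an infinite regular action all of whose elements are elliptic and which fixes a single limit point~$\eta$. The plan here is to play regularity against the uniqueness of~$\eta$. Since in-balls are finite while $V(D)$ is infinite, there are geodesic anti-rays into~$o$ of unbounded length, and translating a ray $R\in\eta$ by the elements carrying these anti-rays should build a geodesic double ray through~$o$ whose forward end is controlled by~$\eta$; applying Lemma~\ref{lem_translationIsHyper} (or Proposition~\ref{prop_nonEllipticQIOfN}) to a suitable left multiplication along this double ray would then produce a non-elliptic element, contradicting $\cD(\mathcal M)=\es$. Making the fellow-travelling of all the translates $L_m(R)$ precise enough to locate a genuinely translating element is the delicate part; everything preceding it is routine bookkeeping with the estimates of Section~\ref{sec_prelim}.
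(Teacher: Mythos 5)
Your reductions are sound up to the final configuration, and two of them are genuinely nicer than the paper's route: the equivalence ``$m$ has infinite order iff $L_m$ is not elliptic'' via cancellativity is exactly the mechanism of Theorem~\ref{thm_infOrderQI}, and your observation that a non-right-invertible $m$ satisfies $L_m(mM\cup\{o\})\sub mM$ with $o\notin mM$, so that Lemma~\ref{lem_translationIsHyper} makes $L_m$ non-elliptic in one line, is a shortcut the paper does not use (the paper reaches the configuration ``every element is an elliptic automorphism'' via Corollary~\ref{cor_ellipticIsAuto} instead, which implicitly yields the inverses you get from your group dichotomy). The case $|\cL(M)|\geq 2$ via Proposition~\ref{prop_DirectionsAlmostDenseInLimit} is precisely the paper's first step. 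But the proof is incomplete exactly where you flag it: the unique-limit-point case (in your framing, an infinite finitely generated torsion group all of whose left multiplications are elliptic). There you offer only a plan, and the plan is aimed at the wrong target: you propose to ``locate a genuinely translating element'' by applying Lemma~\ref{lem_translationIsHyper} or Proposition~\ref{prop_nonEllipticQIOfN} along a double ray, but you give no mechanism for exhibiting a forward-invariant vertex set avoiding~$o$ in a group where every $L_m$ is surjective, and Proposition~\ref{prop_nonEllipticQIOfN} takes a non-elliptic \se\ as input rather than producing one. The missing idea is that the contradiction should not come from producing a new element at all.

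What the paper does in this case is purely boundary-geometric. Assuming all elements elliptic, hence automorphisms, it picks a geodesic ray $R=x_0x_1\ldots$ in the unique boundary point $\eta$ containing rays and elements $m_i$ with $m_i(x_i)=o$; the translates $m_i(R)$ contain geodesics into~$o$ of unbounded length, so a diagonal argument yields a geodesic double ray through~$o$ whose subrays lie in~$\eta$ and whose anti-subrays lie in some $\mu\in\rand D$ -- and since the double ray passes through~$o$ and $d_h$ is visual, $d_h(\mu,\eta)>0$. This is where the uniqueness of $\eta$ is then played against the geometry: letting $Q$ be the anti-subray ending at~$o$ and building a geodesic ray $P$ from~$o$ out of $o$-$Q$ geodesics, one has $P\in\eta$ by uniqueness, and two families of $\delta$-thin geodesic triangles (with end vertices $o$, $o$, $x$ and $o$, $x$, $x$ for $x$ on~$Q$) show that $P$ lies in $\cB^+_\delta(Q)$ and all but its first $\delta$ vertices lie in $\cB^-_\delta(Q)$. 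Hence $P\leq Q\leq P$, so $d_h(\eta,\mu)=0=d_h(\mu,\eta)$ and $\eta=\mu$, contradicting $d_h(\mu,\eta)>0$: a geodesic double ray from~$\eta$ to itself is impossible. So your double-ray construction is the correct first move, but the finish should compare the ray and anti-ray of that double ray directly in this way, rather than hunting for a translating element whose existence the standing hypothesis forbids you from verifying.
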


\begin{proof}
Let $D$ be a locally finite hyperbolic Cayley digraph of~$M$ and let $o$ be the vertex corresponding to the identity of~$M$.
By Proposition~\ref{prop_DirectionsAlmostDenseInLimit}, the statement holds if $|\cL(M)|\geq 2$.
So let us assume that $|\cL(M)|\leq 1$.
Since $D$ is infinite, there exists a hyperbolic boundary point and, since $d(o,x)<\infty$ for all $x\in V(D)$, there exists a hyperbolic boundary point that contains rays.
This lies in $\cL(M)$.
So there is a unique element in $\cL(M)$.
Since all hyperbolic boundary points that contains rays lie in~$\cL(M)$, this implies that $D$ has a unique hyperbolic boundary point~$\eta$ that contain rays.
Let $R=x_0x_1\ldots$ be a geodesic ray in~$\eta$ that starts at~$o$.
Note that this ray exists by \cite[Proposition 9.3]{H-HyperDiBound}.

Let us suppose that $M$ contains no element of infinite order.
Then every element is elliptic.
So Corollary~\ref{cor_ellipticIsAuto} implies that every element of~$M$ is an automorphism on~$D$.
Let $m_i\in M$ such that $m_i(x_i)=o$.
Then the images $m_i(R)$ are infinitely many rays that meet~$o$ and such that they do not only all contain rays starting at~$o$ but also contain geodesics to~$o$ of increasing lengths.
Thus, we obtain a geodesic double ray that meets~$o$.
Its subrays lie in~$\eta$.
Let $\mu\in\rand D$ contain its anti-subrays.
Since $d_h$ is visual, we have $d_h(\mu,\eta)>0$.

Let $Q$ be the anti-subray of that geodesic double ray that ends at~$o$.
Using geodesics $P_i$ from~$o$ to every vertex on~$Q$, we obtain a geodesic ray~$P$ that starts at~$o$ and lies in some hyperbolic boundary point.
Since there is a unique one that contains rays, we have $P\in\eta$.
We consider geodesic triangles with $o$, $o$ and $x$ on~$Q$ as end vertices and with $P_i$ and $xQo$ as non-trivial sides.
Since these triangles are $\delta$-thin, we obtain that all but at most the first $\delta$ vertices of~$P$ lie in $\cB^-_\delta(Q)$.
Similarly, we consider geodesic triangles with $o$, $x$ and $x$ on~$Q$ as end vertices, again with $P_i$ and $xQo$ as non-trivial sides.
These $\delta$-thin triangles show that all vertices of~$P$ lie in $\cB^+_\delta(Q)$.
Thus, we have $P\leq Q\leq P$ and hence $d_h(\eta,\mu)=0$ and $d_h(\mu,\eta)=0$, which implies $\eta=\mu$.
This is a contradiction since there cannot be a geodesic double ray from~$\eta$ to itself.
\end{proof}

Let us now show that these elements of infinite order define \qi\ embeddings.

\begin{thm}\label{thm_infOrderQI}
Let $M$ be a right cancellative finitely generated hyperbolic monoid.
If $g\in M$ has infinite order, then the \se\ induced by~$g$ on the Cayley digraph of~$M$ is not elliptic.

In particular, the embedding of $\langle g\rangle$ into $M$ is a \qiy.
\end{thm}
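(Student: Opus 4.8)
The plan is to prove the two assertions in turn, with the first (non-ellipticity) carrying essentially all of the monoid-theoretic content and the second being a formal consequence of Proposition~\ref{prop_nonEllipticQIOfN}. Let $D$ be the Cayley digraph of~$M$ with respect to a finite generating set, let $o$ be the vertex of the identity, and let $\hat g$ denote the \se\ of~$D$ induced by~$g$, i.\,e.\ left multiplication $m\mapsto gm$. Since $D$ satisfies the standing assumptions of Section~\ref{sec_se} (rooted, hyperbolic, constant out-degree, bounded in-degree), all results of that section apply. I would first record the elementary identity $\hat g^{\,i}(o)=g^i$ for every $i\in\N$.

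For the first assertion I would argue by contradiction, assuming $\hat g$ is elliptic. Applying Lemma~\ref{lem_ellipticFixBalls} with $n=0$ produces some $i\ge 1$ with $\hat g^{\,i}(o)=o$, and by the identity above this reads $g^i=1$ in~$M$. But an element of infinite order satisfies $g^i\neq 1$ for every $i\ge 1$ (otherwise the powers of~$g$ would be eventually periodic and $\langle g\rangle$ finite), a contradiction. Equivalently, one may use that the orbit of~$o$ under an elliptic \se\ is finite to get $g^i=g^j$ for some $0\le i<j$, and then conclude $g^{\,j-i}=1$ from $g^{\,j-i}g^i=1\cdot g^i$ by right cancellation; this version ties the argument directly to the right-cancellativity hypothesis. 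Either way $\hat g$ is not elliptic.

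For the second assertion I would apply Proposition~\ref{prop_nonEllipticQIOfN} to~$\hat g$ at the vertex $v=o$. Its hypothesis $d(o,\hat g(o))=d(o,g)<\infty$ holds because $g$ is a product of generators, which yields a directed $o$-$g$ path in~$D$. The proposition then gives that $\sigma\colon\N\to V(D)$, $i\mapsto \hat g^{\,i}(o)=g^i$, is a \qi\ embedding. Since the Cayley digraph of the cyclic submonoid $\langle g\rangle$ with respect to the generator~$g$ is precisely the digraph~$\N$ (the powers $g^i$ being pairwise distinct, with an edge from $g^i$ to $g^{i+1}$), the map $\sigma$ is exactly the inclusion $\langle g\rangle\hookrightarrow M$ read on Cayley digraphs, so this inclusion is a \qi\ embedding as claimed.

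The main obstacle is already resolved upstream: the quantitative comparison of the metric along the orbit $(g^i)_i$ with the metric on~$M$ is the content of Proposition~\ref{prop_nonEllipticQIOfN}, whose proof is the delicate part. What remains here is only the clean reduction of ``infinite order'' to ``not elliptic''; the two points to watch are that the power produced by Lemma~\ref{lem_ellipticFixBalls} can genuinely be taken to be at least~$1$ (so that $g^i=1$ really contradicts infinite order), and that the directed, possibly infinite, distances $d(g^i,g^j)$ are interpreted consistently with the asymmetric metric underlying Proposition~\ref{prop_nonEllipticQIOfN}.
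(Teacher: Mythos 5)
Your proposal is correct and takes essentially the same approach as the paper: the paper likewise derives non-ellipticity from right cancellativity (an elliptic \se\ forces $g^nh=h$ for some $h\in M$ and $n\geq 1$, whence $g^n=1$ by cancelling $h$ on the right --- exactly your second variant with $h=g^i$), and then obtains the quasi-isometric statement by citing Proposition~\ref{prop_nonEllipticQIOfN} just as you do. Your caveat that Lemma~\ref{lem_ellipticFixBalls} as stated could technically return $i=0$ is well spotted, but your fallback via finiteness of orbits of elliptic \se s plus right cancellation closes it and coincides with the paper's own argument.
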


\begin{proof}
If $g$ induces an elliptic \se, then there exists $h\in M$ with $g^nh=h$ for some $n\in\N$.
Since $M$ is right cancellative, we obtain $g^n=1$ and thus $g$ has finite order.

The additional statement now follows directly from Proposition~\ref{prop_nonEllipticQIOfN}.
\end{proof}

It is easy to see that $\N\times\N$ is not a hyperbolic monoid.
However, this does not directly imply that it cannot be a submonoid of a hyperbolic monoid.
But for right cancellative finitely generated hyperbolic monoids, we will exclude this possibility.

\begin{thm}\label{thm_NoNtimesN}
Let $M$ be a right cancellative finitely generated hyperbolic monoid.
Then $M$ does not contain $\N\times\N$ as a submonoid.
\end{thm}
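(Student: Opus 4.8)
The plan is to argue by contradiction. Suppose $\N\times\N$ is a submonoid of $M$, generated by its two commuting generators, whose images I call $g$ and $h$; both have infinite order. Working in a locally finite hyperbolic Cayley digraph $D$ of $M$ whose root $o$ is the identity, left multiplication is a monoid homomorphism from $M$ into the \se s of $D$, so $g$ and $h$ induce \se s that commute, and by Theorem~\ref{thm_infOrderQI} neither is elliptic. First I would show that $g$ fixes $h^+$ and $h$ fixes $g^+$. Indeed $\hat g(h^+)$ is the limit of $g(h^n(o))=h^n(g(o))$; since $d(g(o),h(g(o)))=d(g,gh)<\infty$, Proposition~\ref{prop_nonEllipticQIOfN} together with Lemma~\ref{lem_directionWellDefined} shows this limit lies at $d_h$-distance $0$ from $h^+$, and as both boundary points contain rays, Proposition~\ref{prop_hyperDi14.1} forces $\hat g(h^+)=h^+$. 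The symmetric statement for $h$ and $g^+$ follows the same way.

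Since every two elements of $\N\times\N$ commute, the same argument shows that no two non-elliptic elements of the submonoid $M'\le M$ of \se s coming from $\N\times\N$ can fail to fix each other's directions. Hence option~(\ref{itm_fpa_4}) of Theorem~\ref{thm_fpa} is impossible for $M'$. By Theorem~\ref{thm_numberOfLimits} and the derivation of~(\ref{itm_fpa_4}) inside the proof of Theorem~\ref{thm_fpa}, the inequality $|\cL(M')|>2$ would force~(\ref{itm_fpa_4}); so $|\cL(M')|\le 2$. As $M'$ is infinite and contains the non-elliptic \se\ $g$, its limit set is non-empty and contains $g^+$, so that $|\cL(M')|\in\{1,2\}$.

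The contradiction I would then extract is a flat-versus-thinness count. Using commutativity one has $g^ih^j\cdot g=g^{i+1}h^j$ and $g^ih^j\cdot h=g^ih^{j+1}$, so the $(n+1)^2$ distinct vertices $v_{i,j}:=g^ih^j(o)$ with $0\le i,j\le n$ are joined by directed word-paths of length at most $\max\{d(o,g),d(o,h)\}$ between horizontal and vertical neighbours, and every $v_{i,j}$ is joined to $v_{n,n}$ by a directed path of length $O(n)$. On the other hand $i\mapsto g^i(o)$, $i\mapsto h^i(o)$ and $i\mapsto (gh)^i(o)=v_{i,i}$ are \qi\ embeddings by Proposition~\ref{prop_nonEllipticQIOfN}, so $d(o,v_{n,0})$, $d(o,v_{0,n})$ and $d(o,v_{n,n})$ all grow linearly in~$n$. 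The aim is to show, via the thinness estimates of Lemma~\ref{lem_HyperDi_7.3} and Proposition~\ref{prop_hyperDi3.3}, that each directed $o$--$v_{n,n}$ path through $v_{i,j}$ is a \qg\ with uniform constants, so that all $(n+1)^2$ grid vertices lie in a bounded out- and in-neighbourhood of a single $o$--$v_{n,n}$ geodesic; since $D$ has bounded degree this neighbourhood contains only $O(n)$ vertices, which is absurd for large~$n$.

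The hard part will be exactly the coarse lower bound $d(o,g^ih^j)\ge c(i+j)$ needed to turn the connecting paths into genuine quasi-geodesics, because the directed setting blocks the naive triangle estimates: the off-diagonal corners $g^n$ and $h^n$ are typically at infinite distance from one another in both directions, so Proposition~\ref{prop_hyperDi3.3} cannot be applied to them directly, and this is precisely where the fixed-direction information from the first two steps has to enter. When $|\cL(M')|=2$ the two generators attract towards opposite boundary points --- one verifies through Lemmas~\ref{lem_nonEllipticFixFinite1} and~\ref{lem_nonEllipticFixFinite2} and Corollary~\ref{cor_nonEllipticFixFinite2} that $g^-=h^+$ and $h^-=g^+$ --- so a band of grid vertices with balanced coordinates clusters in a bounded region of the common axis, which gives the same counting contradiction; the case $|\cL(M')|=1$, where $g^+=h^+$ and the whole grid is squeezed along a single geodesic ray to that point, is treated identically.
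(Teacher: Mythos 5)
Your overall strategy --- quadratically many grid vertices $g^ih^j(o)$ forced into a linearly sized neighbourhood of a single (quasi-)geodesic --- is exactly the counting contradiction the paper uses, but your route to confining the grid has a genuine gap, which you yourself flag: the coarse lower bound $d(o,g^ih^j)\geq c(i+j)$, needed to make the broken paths through $v_{i,j}$ quasi-geodesics with uniform constants, is never established, and the fallback you sketch does not close it. In particular, the claim that for $|\cL(M')|=2$ one ``verifies'' $g^-=h^+$ and $h^-=g^+$ is unsupported: Lemma~\ref{lem_nonEllipticFixFinite2} provides a boundary point $g^-$ with $d_h(g^+,g^-)\neq 0$ only when the \se\ induced by~$g$ happens to be hyperbolic, and nothing in your argument rules out that the \se s induced by $g$ and~$h$ are parabolic, in which case no such $g^-$ exists at all; likewise ``a band of grid vertices with balanced coordinates clusters in a bounded region of the common axis'' is an assertion, not an argument --- there is no axis in the parabolic case, the case $g^+=h^+$ gives you nothing to cluster against, and even in the hyperbolic case you would still need precisely the quantitative control you are missing. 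Your steps 1--2 (each generator fixes the other's direction, hence $|\cL(M')|\leq 2$ via Theorems~\ref{thm_numberOfLimits} and~\ref{thm_fpa}) look correct, but they are too soft to yield any metric lower bound, so the proof does not go through as proposed.

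The paper's proof shows how to obtain the confinement with no lower bound on $d(o,g^ih^j)$ and no boundary fixed-point machinery. Fix $g$ of infinite order and let $R_g$ be the \qg\ ray concatenating $g^n$-$g^{n+1}$ geodesics. For $h\in C_M(g)$, commutativity supplies the crucial fourth side of a directed quadrilateral: $d(g^m,hg^m)\leq d(o,h)$, since a word representing $h$ carries $g^m$ to $g^mh=hg^m$. Splitting the quadrilateral on $o$, $h$, $g^m$, $hg^m$ into two geodesic triangles and combining thinness with quasi-geodesic stability (Lemma~\ref{lem_HyperDi_7.3} and Proposition~\ref{prop_hyperDi3.3}; note $h(R_g)$ is a \qg\ with the same constants as $R_g$ because \se s here are distance-preserving), one obtains a \emph{uniform} $\lambda$ such that every $h(R_g)$ with $h\in C_M(g)$ eventually lies in the $\lambda$-out-ball of~$R_g$. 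Applied to the powers $h^m$, this puts quadratically many elements $h^mg^n$ into a region containing only linearly many vertices of the locally finite Cayley digraph, so two of them coincide, $h^{m_1}g^{n_1}=h^{m_2}g^{n_2}$ with $m_1<m_2$, which is impossible in $\N\times\N$. The key observation your proposal lacks is that commutativity itself provides the bounded directed side from $g^m$ to $hg^m$, substituting for the triangle estimates that the directed setting blocks; with that quadrilateral argument in place of your final two paragraphs, the proof closes.
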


\begin{proof}
Let $D$ be a locally finite hyperbolic Cayley digraph of~$M$ and let $o$ be the vertex corresponding to the identity of~$M$.
Let $\varphi\colon\R\to\R$ be the function such that $D$ satisfies (B1) and (B2) for~$\varphi$.
Let $g\in M$ be of infinite order and let $R_g$ be the \qg\ in~$D$ starting at~$o$ that is a concatenation of geodesics joining $g^n$ to $g^{n+1}$ for all $n\in\N$.
First, we will show the following.
\begin{txteq}\label{itm_NoNtimesN_1}
There exists $\lambda\geq 0$ such that, for all $h\in C_M(g)$, the ray $h(R_g)$ eventually lies in the $\lambda$-out-ball of~$R_g$.
\end{txteq}
Since $h$ acts as a \se\ on~$D$ and all vertices have the same out-degree, the image $h(R_g)$ is a \qg, too.
Let $\gamma\geq 1$ and $c\geq 0$ such that $R_g$ and $h(R_g)$ are $(\gamma,c)$-\qg s.
By Lemma~\ref{lem_HyperDi_7.3}, there exists $\kappa$ such that all $(\gamma,c)$-\qg s lie within the $\kappa$-out- and $\kappa$-in-balls around geodesics with the same end vertices and vice versa.

Let $n,m\in\N$ with $0<n<m$.
Let $S_1$ be an $o$-$h$ geodesic and $S_2$ a $g^m$-$hg^m$ geodesic.
Let $P_1$ be an $o$-$g^m$ geodesic and let $P_2$ be an $h$-$hg^m$ geodesic.
Let $Q$ be an $o$-$hg^m$ geodesic.
Then there exists $x$ on~$P_2$ with $d(x,hg^n)\leq\kappa$.
The vertex on $Q$ of distance $d(o,h)+\delta+1$ from~$o$ must have a vertex of distance~$\delta$ from it on~$P_2$.
Since $h(R_g)$ is a \qg, we may assume that $n$ was chosen large enough such that this vertex lies on $hP_2x$.
Thus, there exists $u$ on~$Q$ with $d(u,v)\leq\delta$ for some vertex $v$ on $hP_2x$ such that the out-neighbour $u^+$ of~$u$ on~$Q$ satisfies $d(u^+,v^+)\leq\delta$ for some vertex $v^+$ on $xP_2g^mh$.
Applying Proposition~\ref{prop_hyperDi3.3}, we obtain
\[
d(v,v^+)\leq (2\delta+1)\varphi(\delta+1)
\]
and thus
\[
d(u,x)\leq \delta + (2\delta+1)\varphi(\delta+1).
\]

We may assume that $m$ was chosen large enough such that $d(u,hg^m)>\delta+d(g^m,hg^m)$.
Then there exists $w$ on~$P_1$ with $d(w,u)\leq\delta$.
By the choice of~$\kappa$, there is a vertex $y$ on~$oR_gg^m$ with $d(y,w)\leq\kappa$ and hence
\[
d(y,hg^n)\leq 2\kappa + 2\delta + (2\delta+1)\varphi(\delta+1).
\]
This proves~(\ref{itm_NoNtimesN_1}).

Let us suppose that there is $h\in C_M(g)$ of infinite order such that $\langle g,h\rangle$ is isomorphic to $\N\times\N$ in a canonical way.
Then (\ref{itm_NoNtimesN_1}) implies the existence of ${m_1,m_2,n_1,n_2\in\N}$ with $m_1<m_2$ such that
\[
h^{m_1}g^{n_1}=h^{m_2}g^{n_2}.
\]
This, however, is not possible in $\N\times\N$ unless $m_1=m_2$ and $n_1=n_2$.
This contradiction shows the assertion.
\end{proof}

We call a finitely generated hyperbolic monoid \emph{non-elementary} if it contains infinitely many hyperbolic boundary points.
Note that by~\cite[Corollary 11.2]{H-HyperDiBound} this is, for finitely generated cancellative hyperbolic monoids, equivalent to having more than two hyperbolic boundary points under the additional assumption that the hyperbolic boundary is a $T_1$-space.

\begin{thm}\label{thm_FreeRk2}
Let $M$ be a right cancellative non-elementary finitely generated hyperbolic monoid.
Then $M$ contains a free submonoid of rank~$2$.
\end{thm}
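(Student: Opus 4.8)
The plan is to view $M$ as a monoid of \se s of its own Cayley digraph and then feed the structural results of Section~\ref{sec_se} with the single geometric input that $\cL(M)$ is large. Let $D$ be a rooted locally finite hyperbolic Cayley digraph of~$M$, with root~$o$ the vertex of the identity; as set up in Section~\ref{sec_monoids} it has constant out-degree and bounded in-degree, and $M$ acts on it by \se s via left multiplication. This action is faithful, since $g$ maps $o=1$ to~$g$; in particular the orbit $\{g(o)\mid g\in M\}$ is all of~$V(D)$. It therefore suffices to produce two elements of~$M$ that freely generate a free submonoid of rank~$2$, and by faithfulness it is enough to find these inside the monoid of \se s.

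The heart of the matter is to deduce $|\cL(M)|>2$ from non-elementarity. First I would observe that $\cL(M)$ is precisely the set of hyperbolic boundary points of~$D$ that contain rays: any such point carries a geodesic ray from~$o$ by \cite[Proposition 9.3]{H-HyperDiBound}, whose vertices lie in $\{g(o)\mid g\in M\}$ and converge to it, so it lies in~$\cL(M)$, while the reverse inclusion is immediate from the definition. Now by Proposition~\ref{prop_forCorOf14.1} every $\eta\in\rand D$ satisfies $d_h(\mu_\eta,\eta)=0$ for some ray-containing $\mu_\eta\in\rand D$, and by Proposition~\ref{prop_bpOfDist0}\,(\ref{itm_bpOfDist0_1}), using that $D$ has bounded out-degree, there is a constant~$K$ with $|\{\nu\in\rand D\mid d_h(\mu,\nu)=0\}|\leq K$ for every fixed~$\mu$. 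Hence $\eta\mapsto\mu_\eta$ is a map $\rand D\to\cL(M)$ with fibres of size at most~$K$, so that $|\rand D|\leq K\,|\cL(M)|$. Since $M$ is non-elementary, $\rand D$ is infinite (equivalently $|\rand D|>2$ by \cite[Corollary 11.2]{H-HyperDiBound}), whence $\cL(M)$ is infinite and in particular $|\cL(M)|>2$.

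With $|\cL(M)|>2$ established, the remainder is an application of Section~\ref{sec_se}. The monoid cannot fix a finite vertex set, as that would force $\cL(M)=\es$, so alternative~(\ref{itm_fpa_1}) of Theorem~\ref{thm_fpa} is excluded, and alternative~(\ref{itm_fpa_3}) is excluded outright; inspecting the proof of Theorem~\ref{thm_fpa} in the regime $|\cL(M)|>2$ yields alternative~(\ref{itm_fpa_4}), namely non-elliptic $g,h\in M$ neither of which fixes the direction of the other. Theorem~\ref{thm_freeSubmonoid} then supplies $n,m\in\N$ such that $g^n$ and~$h^m$ freely generate a free submonoid of rank~$2$, and by faithfulness of the action this is a free submonoid of~$M$ itself.

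The main obstacle is the passage from $|\rand D|=\infty$ to $|\cL(M)|>2$: the limit set records only the boundary points containing rays, so one must control how many boundary points can collapse onto a single ray-containing point at pseudo-distance~$0$. This is exactly what the finiteness bound of Proposition~\ref{prop_bpOfDist0} provides, and it is the reason non-elementarity (infinitely many boundary points) is the convenient hypothesis: the crude estimate $|\rand D|\leq K\,|\cL(M)|$ turns an infinite boundary into an infinite limit set, whereas $|\rand D|\geq 3$ alone would not obviously suffice.
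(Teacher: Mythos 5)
Your proof is correct and takes essentially the same route as the paper's: the paper likewise deduces that $\cL(M)$ is infinite from Propositions~\ref{prop_forCorOf14.1} and~\ref{prop_bpOfDist0} and then concludes via Theorem~\ref{thm_freeSubmonoid}. Your write-up merely makes explicit the steps the paper compresses, namely the fibre-counting bound $|\rand D|\leq K\,|\cL(M)|$ and the appeal to (the proof of) Theorem~\ref{thm_fpa} to produce the two non-elliptic elements with the required properties.
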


\begin{proof}
Let $D$ be a locally finite hyperbolic Cayley digraph of~$M$ and let $o$ be the vertex corresponding to the identity of~$M$.
By Propositions~\ref{prop_forCorOf14.1} and~\ref{prop_bpOfDist0}, there are infinitely many elements in $\cL(M)$.
Thus, Theorem~\ref{thm_freeSubmonoid} implies the assertion.
\end{proof}

\section{Growth of hyperbolic monoids}\label{sec_growth}

A \emph{growth function} is a monotone non-decreasing function $\N\to\N$, that is a function $f\colon \N\to\N$ with $f(t_1)\leq f(t_2)$ for all $t_1\leq t_2$.
For two growth functions $f,g$, we write $f\preccurlyeq g$ if there are $k,\ell\geq 1$ with $f(t)\leq kg(\ell t)$ for all $t\in\N$ and we write $f\sim g$ if $f\preccurlyeq g$ and $g\preccurlyeq f$.
This is an equivalence relation and the equivalence class of~$f$ is its \emph{growth type}.

Let $S$ be a semigroup with finite generating set $A$.
For $s\in S$, we denote by $\ell_A(s)$ the minimum length of a word in~$A$ the represents~$s$.
The function
\[
g_{A,S}\colon \N\to\N,\quad g_{A,S}(t)=|\{s\in S\mid \ell_A(s)\leq t\}|
\]
is the growth function of~$S$ with respect to~$A$.
The growth type of $g_{A,S}$ is independent of the particular finite generating set, so it is a semigroup invariant.
More generally, Gray and Kambites~\cite{GK-SemimetricSpaces} proved that the growth type is even a \qiy\ invariant for finitely generated semigroups.

We say that $S$ has \emph{exponential} growth type, if there exists $\theta>1$ such that 
\[
g_{A,S}(t)\geq \theta^t
\]
for all sufficiently large~$t$.
Note that free semigroups of rank at least~$2$ have exponential growth type.

We need the following lemma for the growth type of semigroups, which follows directly from the definition.

\begin{lem}\label{lem_growthSubsemigroups}
Let $S$ be a semigroup and $S'$ a subsemigroup of~$S$.
If $f$ and~$g$ are growth functions of~$S'$ and~$S$, respectively, then $f\preccurlyeq g$.\qed
\end{lem}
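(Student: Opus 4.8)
The plan is to use that a short word over a generating set of the subsemigroup $S'$ is still a reasonably short word over a generating set of the ambient semigroup~$S$, so that a ball of radius~$t$ in~$S'$ is contained in a ball of radius proportional to~$t$ in~$S$. Since the growth type of a finitely generated semigroup does not depend on the chosen finite generating set and $\preccurlyeq$ is transitive, I may fix a finite generating set $A$ of~$S$ and a finite generating set $A'$ of~$S'$ and prove the claim for the specific growth functions $f=g_{A',S'}$ and $g=g_{A,S}$; the statement for arbitrary growth functions then follows.

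First I would note that $A'\sub S'\sub S$, so every $a\in A'$ has finite $A$-length, and I would set $L:=\max_{a\in A'}\ell_A(a)$, which is finite as $A'$ is finite. Given $s\in S'$, I would take a shortest word over~$A'$ representing~$s$, of length $\ell_{A'}(s)$, and replace each of its letters by a shortest $A$-word representing that generator; this produces an $A$-word for~$s$ of length at most $L\,\ell_{A'}(s)$, whence $\ell_A(s)\leq L\,\ell_{A'}(s)$ for every $s\in S'$.

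Finally, since the inclusion $S'\sub S$ is injective, every $s\in S'$ with $\ell_{A'}(s)\leq t$ is an element of~$S$ with $\ell_A(s)\leq Lt$, so $f(t)=g_{A',S'}(t)\leq g_{A,S}(Lt)=g(Lt)$ for all $t\in\N$. Taking $k=1$ and $\ell=L$ in the definition of~$\preccurlyeq$ gives $f\preccurlyeq g$. There is no genuine obstacle here: both the inequality $\ell_A\leq L\,\ell_{A'}$ on~$S'$ and the counting step are immediate from the definitions, which is exactly why the lemma is said to follow directly from them.
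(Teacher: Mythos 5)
Your proof is correct and is exactly the standard argument the paper has in mind: the paper states the lemma without proof, noting it ``follows directly from the definition,'' and your substitution bound $\ell_A(s)\leq L\,\ell_{A'}(s)$ together with the injection of balls is precisely that direct verification. The reduction to fixed finite generating sets via invariance of the growth type, and the observation that $L\geq 1$ is finite because $A'$ is finite, handle the only points where care is needed.
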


\begin{thm}\label{thm_Growth}
Let $M$ be a right cancellative non-elementary finitely generated hyperbolic monoid.
Then $M$ has exponential growth.
\end{thm}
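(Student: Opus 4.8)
The plan is to deduce this immediately from the existence of a free submonoid of rank~$2$ together with the monotonicity of growth under passing to subsemigroups. Since $M$ is right cancellative, non-elementary, finitely generated, and hyperbolic, Theorem~\ref{thm_FreeRk2} guarantees that $M$ contains a free submonoid $F$ of rank~$2$, i.\,e.\ an isomorphic copy of $\N\ast\N$. The whole argument is therefore a short chain: free monoid of rank~$2$ has exponential growth, and growth of a subsemigroup is dominated by the growth of the ambient semigroup, so $M$ cannot grow slower than $F$.

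First I would fix a finite generating set $A$ of $M$ and consider the growth function $g_{A,M}$. Let $F\le M$ be the free submonoid of rank~$2$ obtained from Theorem~\ref{thm_FreeRk2}, freely generated by two elements $a,b\in M$. The free monoid on two generators has exactly $2^t$ words of length~$t$, all representing distinct elements, so with respect to the generating set $\{a,b\}$ its growth function $f$ satisfies $f(t)\ge 2^t$ for all $t$; in particular $f$ has exponential growth type, as already noted in the text for free semigroups of rank at least~$2$. This identifies the growth type of $F$ as exponential.

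Next I would invoke Lemma~\ref{lem_growthSubsemigroups} with $S'=F$ and $S=M$: if $f$ is a growth function of $F$ and $g_{A,M}$ a growth function of $M$, then $f\preccurlyeq g_{A,M}$. Unwinding the definition of $\preccurlyeq$, there are constants $k,\ell\ge 1$ with $f(t)\le k\,g_{A,M}(\ell t)$ for all $t$, whence $g_{A,M}(\ell t)\ge 2^t/k$. A routine reparametrisation then produces a constant $\theta>1$ with $g_{A,M}(s)\ge\theta^{s}$ for all sufficiently large~$s$: one sets $\theta:=2^{1/(2\ell)}$, say, and absorbs the constant $k$ and the rescaling by $\ell$ into the ``sufficiently large'' clause, which is exactly the definition of exponential growth type given above. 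Since the growth type is independent of the finite generating set, this shows $M$ has exponential growth.

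There is essentially no obstacle here, as all the real work has been done in the earlier sections: the only input is Theorem~\ref{thm_FreeRk2}, which furnishes the free submonoid, and Lemma~\ref{lem_growthSubsemigroups}, which transfers the lower bound. The single point that needs a line of care is the bookkeeping in converting $f\preccurlyeq g_{A,M}$ into a genuine exponential lower bound $g_{A,M}(t)\ge\theta^t$ for large~$t$; this is the standard observation that the equivalence $\sim$ preserves exponential growth type, and it is immediate from the definitions of $\preccurlyeq$ and of exponential growth recorded at the start of this section.
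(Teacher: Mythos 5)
Your proposal is correct and follows exactly the paper's own argument: Theorem~\ref{thm_FreeRk2} supplies the free submonoid of rank~$2$, which has exponential growth type, and Lemma~\ref{lem_growthSubsemigroups} transfers this lower bound to~$M$. The extra bookkeeping you supply (converting $f\preccurlyeq g_{A,M}$ with $f(t)\geq 2^t$ into $g_{A,M}(s)\geq\theta^s$ for large~$s$, using monotonicity of $g_{A,M}$) is a valid and slightly more explicit rendering of what the paper leaves implicit.
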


\begin{proof}
By Theorem~\ref{thm_FreeRk2}, $M$ contains a free subsemigroup of rank~$2$.
This has exponential growth type and thus, the growth type of~$M$ is at least exponential by Lemma~\ref{lem_growthSubsemigroups} and thus it is exponential.
\end{proof}

\section{Self-embeddings of graphs}\label{sec_graphs}

In~\cite{H-SelfEmbeddingsTrees}, \se s of trees were analysed and in \cite[Section 5.1]{H-SelfEmbeddingsTrees} the general problem to extend the results to graphs was posed.
While for \se s of trees a result says that each of them fixes either a finite vertex set setwise or at most two ends pointwise, see \cite[Corollary 2.3]{H-SelfEmbeddingsTrees}, we will prove in this section that the corresponding result for graphs is wrong in general.
For that, we will construct in Example~\ref{ex_seMayFixArbitrarilyManyBP_graph}, for every $n\in\N$, a graph with precisely $n$ ends and a \se\ of that graph that fixes all ends pointwise but no finite vertex set setwise.
This example is just the underlying undirected graph of Example~\ref{ex_seMayFixArbitrarilyManyBP}.
But before we construct that, let us fix some notions for graphs.

A \emph{ray} is a one-way infinite path and a \emph{double ray} is a two-way infinite path.
Two rays in a graph~$G$ are \emph{equivalent} if there exists for every finite vertex set $S\sub V(G)$ a unique component that contains all but finitely many vertices from both rays.
This is an equivalence relation whose classes are the \emph{ends} of~$G$.

\begin{ex}\label{ex_seMayFixArbitrarilyManyBP_graph}
Let $n\in\N$.
Let $x_0x_1\ldots$ be a ray and for every $1\leq i\leq n-1$ let $\ldots y_{-1}^iy_0^iy_1^i\ldots$ be a double ray.
Let $G$ be the graph obtained from the disjoint union of this ray and these double rays with $x_jy_j^i$ as additional edges for every $j\in\N$ and every $0\leq i\leq n-1$.
Then $G$ has precisely $n$ ends.
Let $g$ be the \se\ that maps every $x_j$ to $x_{j+1}$ and every $y_j^i$ to $y_{j+1}^i$.
This \se\ fixes each end but no finite vertex set setwise.
\end{ex}

\providecommand{\bysame}{\leavevmode\hbox to3em{\hrulefill}\thinspace}
\providecommand{\MR}{\relax\ifhmode\unskip\space\fi MR }
\providecommand{\MRhref}[2]{%
  \href{http://www.ams.org/mathscinet-getitem?mr=#1}{#2}
}
\providecommand{\href}[2]{#2}

\end{document}